\documentclass[a4paper,12pt,reqno]{article}
\usepackage[english]{babel}
\usepackage{amsmath,amsfonts,amssymb,amsthm,bbm}
\usepackage{graphics,epsfig,psfrag} 
\usepackage{paralist,subfigure,multirow,float}
\usepackage{hyperref} 
\usepackage[active]{srcltx} 
\usepackage[dvipsnames]{xcolor}
\usepackage{soul} 
\usepackage[latin1]{inputenc}
\usepackage[OT1]{fontenc}

\newtheorem{theorem}{Theorem}[section]
\newtheorem{corollary}[theorem]{Corollary}
\newtheorem{lemma}[theorem]{Lemma}
\newtheorem{proposition}[theorem]{Proposition}
\newtheorem{remark}[theorem]{Remark}

\newtheorem{definition}[theorem]{Definition}

\theoremstyle{definition}

 \usepackage{autonum} 

\def\R{\mathbb{R}}

\def\epsilon{\varepsilon}

\let\.=\cdot
\let\0=\emptyset

\let\mc=\mathcal

\def\1{\mathbbm{1}}

\def\hat{\widehat}
\def\tilde{\widetilde}

\newenvironment{formula}[1]{\begin{equation}\label{#1}}{\end{equation}\noindent}

\numberwithin{equation}{section}
\newcommand{\be}{\begin{equation}}
\newcommand{\ee}{\end{equation}}
\newcommand{\baa}{\begin{array}}
\newcommand{\eaa}{\end{array}}
\newcommand{\ba}{\begin{eqnarray}}
\newcommand{\ea}{\end{eqnarray}}
\def\Fi#1{\begin{formula}{#1}}
\def\Ff{\end{formula}\noindent}
\def\BS{\color{Bittersweet}}

\def\W{\mc{W}}

\begin{document}
\date{}
\title{\bf{Spreading dynamics for the Lotka-Volterra system with general initial supports: the strong competition}}
\author{Hongjun Guo$^{\dagger}$\thanks{Corresponding author. Email: guohj29@tongji.edu.cn}\\
\\
\footnotesize{$^{\dagger}$School of Mathematical Sciences, Key Laboratory of Intelligent Computing and Applications }\\
\footnotesize{(Ministry of Education), Institute for Advanced Study, Tongji University, Shanghai, China}}
\maketitle
	
\begin{abstract}
\noindent{This paper investigates the high-dimensional spreading dynamics of the Lotka-Volterra strong competition-diffusion system defined on $\mathbb{R}^N$ with $N\ge 2$, where two competing species are initially supported on disjoint measurable subsets $U,\, V\subset\mathbb{R}^N$ that may be unbounded. Under the strong competition regime $a,b>1$, the system possesses two stable homogeneous equilibria $(1,0)$ and $(0,1)$, and exhibits layered wave structures known as propagating terraces along distinct spatial directions, whose rigorous characterization remains largely open in multi-dimensional settings. Most existing analytical results for propagating terraces are restricted to one-dimensional domains or compactly supported initial data. In this work, we introduce projection paths and star-shaped geometric assumptions to analyze the anisotropic propagation of two competing species. For each unit direction $e\in\mathbb{S}^{N-1}$, we derive a dichotomy of long-time spreading behavior governed by the comparison between the single-species spreading speeds $w_u(\xi)$ and $w_v(\xi)$ along the projection path associated with $e$: if $w_u(\xi)>w_v(\xi)$ for all relevant directions $\xi$ on the path, species $u$ invades the whole space and $v$ goes extinct in direction $e$; otherwise, a propagating terrace emerges, with $u$ advancing at the interspecific bistable wave speed $c_{uv}(e)$ inside the region occupied by $v$, which spreads at its own single-species speed $w_v(e)$. }
\vskip 4pt
\noindent{\small{\it{Keywords}}: Lotka-Volterra system; Strong competition; Initial support; Spreading speed; Spreading set.}
\vskip 4pt
\noindent{\small{\it{Mathematics Subject Classification}}: 35B30; 35B40; 35K45; 35K57.}
\end{abstract}
	


\section{Introduction and main results}\label{intro}

This paper is devoted to the study of propagation phenomenon for the following Lotka-Volterra competition-diffusion system in $\mathbb{R}^{N}$ with $N\ge 2$
\begin{equation}\label{pro1}
	\begin{cases}
		u_{t}=d\Delta u+ru(1-u-av), &t>0,\, x\in \mathbb{R}^{N} ,\\
		v_{t}=\Delta v+v(1-v-bu) ,  &t>0,\, x\in \mathbb{R}^{N}.
	\end{cases}
\end{equation}
In the system, $u(t, x)$, $v(t, x)$ represent the population densities of two competing species at the  time $t$ and the space position $x$ respectively, $d$ stands for the diffusion rate of $u$, $r$ represents the intrinsic growth rate of $u$, $a$ and $b$ are the competition coefficients for two species respectively. All the  parameters are  positive. The two species are set to initially occupy two distinct domains in the space respectively. We are interested in the spreading dynamics of these two species. Mathematically, the initial values are set to satisfy
$$u(0,x)=u_0(x)=\1_{U} \quad \hbox{and} \quad v(0,x)=v_0(x)=\1_{V},$$
where $U$ and $V$ are measurable subsets of $\mathbb{R}^N$ (which can be unbounded in general) satisfying $U\cap V=\emptyset$ and they are called initial support. Here, $\1_{U}$ is the characteristic function of the set $U$, that is,
\begin{eqnarray*}
\1_{U}=\left\{\begin{array}{lll}
1, &&\hbox{for $x\in U$},\\
0, &&\hbox{for $x\in \R^N\setminus U$}.
\end{array}
\right.
\end{eqnarray*}
The initial condition could be more general, such as multiples $\alpha \1_{U}$ and $\beta \1_{V}$ of characteristic functions, with $\alpha>0$ and $\beta>0$. But for presenting our results as clear as possible, we only consider $\1_{U}$. The solution of \eqref{pro1} is understood in the sense that $(u,v)(t,\cdot)\rightarrow (u_0,v_0)$ as $t\rightarrow 0^+$ in $L_{loc}^1(\R^N)\times L_{loc}^1(\R^N)$.

The spreading dynamics of two competing species under staged invasions has long been a core open subject in the theory of competitive reaction-diffusion systems. The pioneering investigation by Shigesada and Kawasaki \cite{SK} in 1997 first estimated the spreading speeds of two competitors based on the linear determinacy and local determinacy conjectures, yet they themselves acknowledged that these conjectures might require revision, as Hosono's numerical simulations \cite{Ho} demonstrated scenarios where linear determinacy breaks down. 
Whether linear determinacy holds depends sensitively on system parameters: it is valid for certain parameter regimes verified by Lewis, Li, Weinberger \cite{LLW}, Huang \cite{Hu}, and Alhasanat, Ou \cite{AO}, while it fails in other regimes documented by Huang, Han \cite{HH} and Alhasanat, Ou \cite{AO}. To date, a sharp necessary and sufficient parameter condition that fully distinguishes linear determinacy from its failure remains undiscovered, standing as one of the most prominent unsolved problems for the Lotka-Volterra competition-diffusion system. Closely intertwined with this issue is the dichotomy between pulled fronts (driven by exponentially decaying tails) and pushed fronts (propelled by the wave's rear), alongside the partially verified conjecture of Roques, Hosono, Bonnefon and Boivin \cite{RHBB}, which claims an exact equivalence between pulled/pushed front classification and the validity/failure of linear determinacy.

The concept of \textit{propagating terraces} (composite structures consisting of sequential traveling fronts with distinct speeds) adds further complexity to this problem. Coined by Ducrot, Giletti and Matano \cite{DGM} (building on the earlier term ``minimal decomposition'' from Fife and McLeod \cite{FM} for homogeneous scalar equations), propagating terraces describe dynamics where two invasion processes spread at separate velocities. For system \eqref{pro1}, the two distinct spreading speeds originate respectively from the monostable Fisher-KPP scalar equation and a bistable differential system analyzed by Kan-On \cite{Kan}. The coexistence of two disparate spreading speeds forming a propagating terrace rules out direct deployment of most classical scalar PDE techniques.

Against the backdrop of these long-standing open questions, the present paper addresses the high-dimensional spreading dynamics of the competitive system \eqref{pro1} under the strong competition regime $a,b>1$, where the system admits bistable competition dynamics. Unlike one-dimensional frameworks studied in most existing literature, propagation in $\mathbb{R}^N$ ($N\ge 2$) exhibits prominent anisotropic behavior: \textit{propagating terraces} may emerge along different spatial directions, with direction-dependent spreading speeds yet to be rigorously characterized. Existing rigorous treatments of propagating terraces are largely confined to scalar one-dimensional equations or simple one-dimensional systems with Heaviside-type/compactly supported initial data, leaving high-dimensional anisotropic terrace spreading in bistable competitive systems largely unexplored. Our main target is to rigorously identify and compute the exact spreading speed corresponding to each spatial direction, filling a critical gap in the theory of multi-species invasion in high-dimensional media.

\subsection{Known results for a single species}

Even if there is only one species, the spreading dynamics is not a trivial extension for general initial supports from compact initial supports. Suppose $v\equiv 0$ and the system then reduces into the following Fisher-KPP equation (which is named by the pioneering works of Fisher \cite{F} and Kolmogorov, Petrovsky and Piskunov \cite{KPP})
\be\label{eq:u}
u_{t}=d\Delta u+ru(1-u), \quad t>0,\, x\in\R^N,
\ee
with the initial value $u_0(x)=\1_U$. One knows from the hair trigger effect of Aronson and Weinberger \cite{Aro} that if the Lebesgue measure of $U$ is positive, $u(t,x)\rightarrow 1$ locally uniformly in $\R^N$ as $t\rightarrow +\infty$. The intriguing problem here is how to describe the expanding region where $u$ is close to $1$. Key notions are the spreading speed and the spreading set. 

\begin{definition}
For a given unit vector $e\in\mathbb{S}^{N-1}$, a quantity $\omega_u(e)$ is called spreading speed of $u$ if
\ba\label{conv:ss}
\left\{\begin{array}{lll}
u(t,cte)\rightarrow 1 &&\hbox{as $t\rightarrow +\infty$ for every $0\le c<\omega_u(e)$},\\
u(t,ct e)\rightarrow 0 &&\hbox{as $t\rightarrow +\infty$ for every $c>\omega_u(e)$}.
\end{array}
\right.
\ea
\end{definition}

\begin{definition}\label{def:W}
A set $\W_u\subset\R^N$ is called spreading set of $u$  if $\W_u$ coincides  with the interior of its closure and satisfies 
\be\label{subset}
\lim_{t\to+\infty}\,\Big(\min_{x\in C}u(t, tx)\Big)=1, \, \text{ for any non-empty compact set }C\subset \mc{W}_u,
\ee
\be\label{superset}
\lim_{t\to+\infty}\,\Big(\max_{x\in C}u(t,tx)\Big)=0,  \, \text{ for any non-empty compact set }C\subset\R^N\setminus \overline{\mc{W}_u}.
\ee
If only \eqref{subset} (resp. \eqref{superset}) holds, we say that $\mc{W}_u$ is a spreading subset (resp. superset) of $u$.
\end{definition}

The requirement that the spreading set coincides with the interior of its closure immediately implies its uniqueness. The spreading speed $\omega_v(e)$ and set $\W_v$ can be defined for $v$ in the same way and we are not going to repeat it.

When the initial support $U$ is a compact set with a positive Lebesgue measure, the classical results of Aronson and Weinberger~\cite{Aro} tell us that the spreading speed and set of the solution $u$ for \eqref{eq:u} are  $\omega_u(e)=c_u:=2\sqrt{dr}$ for all $e\in\mathbb{S}^{N-1}$ and $\W_u=B_{c_u}$ respectively.\footnote{The notation $B_r(x_0)$ denotes the ball with center $x_0$ and radius $r$. For  $x_0=0$, denote the ball by $B_r$ for short.} This means that the species $u$ spreads with the constant speed $c_u$ in every direction, which is independent of the shape of the set $U$. Here,  $c_u=2\sqrt{d r}$ is the minimal speed of traveling fronts such that the equation \eqref{eq:u} admits a traveling front $\phi(x\cdot e-ct)$ for $t\in\mathbb{R}$, $x\in\mathbb{R}^N$ and every $e\in\mathbb{S}^{N-1}$ if and only if $c\ge c_u$. The profile $\phi(\xi)$ and the speed $c$ of a traveling front satisfy the following equation
\begin{equation}\label{into1}
	\begin{cases}
		d\phi''+c\phi'+r \phi(1-\phi)=0,  \, ~\text{in}~\xi \in \mathbb{R}
		\\
		\phi(-\infty)=1,\, \phi(+\infty)=0.
	\end{cases}
\end{equation}
The  profile $\phi$ is unique up to shifts for every $c\ge c_u$.
In the sequel, we denote the traveling front with the minimal speed of \eqref{eq:u} by $\phi(x\cdot e- c_{u}t)$ with $\phi(0)=1/2$. 

There is a vast literature investigating the large time dynamics of solutions of \eqref{eq:u} for various types of $f$ with initial conditions $u_0$ that are compactly supported or converge to $0$ at infinity. For extinction or invasion results upon the size and/or the amplitude of the initial condition $u_0$, we refer to \cite{Aro, DM, LK, MZ, MZ2, Xin}, and for general local convergence and quasiconvergence results at large time, we refer to \cite{DM, DP, MP1, MP2, P}. 

When the initial support $U$ is a general measurable set, there is much less investigation. In this case, the spreading of $u$ keeps  a memory of the set $U$, see the work of Hamel and Rossi \cite{HR}. To describe the spreading dynamics, they introduced geometrical notions called bounded direction and unbounded direction defined as follows.

\begin{definition}[\cite{HR}]\label{deuv}
The set of bounded directions of $U$ and the set of unbounded directions of $U$ are given by	
	\begin{equation}
		\mathcal{B}(U):=\left\lbrace 
		\xi\in \mathbb{S}^{N-1}:\liminf_{\tau\to
			+\infty}\frac{ \text{{\rm dist}}
			(\tau \xi , U)}{\tau}>0\right\rbrace,
	\end{equation}
	and 
	\begin{equation}
		\mathcal{U}(U):=\left\lbrace 
		\xi\in \mathbb{S}^{N-1}:\lim_{\tau\to
			+\infty}\frac{\text{{\rm dist}}
			(\tau \xi , U)}{\tau}=0 \right\rbrace,
	\end{equation}
	respectively.
\end{definition}

By the definition, sets $\mathcal{B}(U)$ and $\mathcal{U}(U)$ are relatively open and closed in $\mathbb{S}^{N-1}$ respectively and the following geometrical properties hold:
\begin{itemize}
	\item $\xi \in \mathcal{B}(U)$ if and only if there exists an open cone $\mathcal{C} \supset \mathbb{R}^{+}\xi$ such that $U \cap \mathcal{C} \subset B_R$ for some large $R>0$, i.e., $\mathbb{R}^{+}\xi \subset \mathcal{C} \subset (\mathbb{R}^N \setminus U) \cup B_R$.
	\item  $\xi \in \mathcal{U}(U)$ means for any open cone $\mathcal{C} \supset \mathbb{R}^{+}\xi\), \(U \cap \mathcal{C}$ is unbounded.
\end{itemize}
 For instance,  when $U$ is an open cone with vertex $0$, then the set of unbounded directions 
 is $\overline{U} \cap \mathbb{S}^{N-1}$ and the set of bounded directions is $\mathbb{S}^{N-1}\setminus \overline{U}$. Another notion needed is
positive-distance-interior $U_{\rho}$ (with $\rho>0$) of the set $U$ defined as
\[
U_{\rho}:=\left\lbrace x\in U: 
\text{dist}(x, \partial U)\ge\rho \right\rbrace. 
\]

By these notions, Hamel and Rossi \cite{HR} proved a variational formula of the spreading speed of the solution $u$ for \eqref{eq:u} (they dealt with a more general nonlinearity which covers $ru(1-u)$). Precisely, if there is $\rho>0$\footnote{In \cite{HR}, they require the positive-distance-interior $U_{\rho}$ to be nonempty where $\rho>0$ is a constant such that if $U=B_{\rho}$, then $u\rightarrow 1$ locally uniformly in $\R^N$ as $t\rightarrow +\infty$. However, by the hair trigger effect, $\rho$ can be any positive constant in the Fisher-KPP case.} such that $U_{\rho}\neq \emptyset$ and $U$ satisfies
\be\label{BUUU}
\mathcal{B}(U)\cup	\mathcal{U}(U_\rho)= \mathbb{S}^{N-1},
\ee
then for every $e\in\mathbb{S}^{N-1}$, the spreading speed $w_u(e)$ of $u$ is given by
\be\label{omega-u}
	w_u(e)=\sup_{\xi\in \mathcal{U}
	(U), ~\xi\cdot e\ge 0}
\frac{c_u}{\sqrt{1-(\xi\cdot e)^{2}}},
\ee
with the conventions: $w_u(e)=c_u$ if there is no $\xi \in \mathcal{U}(U)$ such that $\xi \cdot e\ge 0$, and $w_u(e)=+\infty$ if $e\in \mathcal{U}(U)$. If $\mathcal{U}(U)\neq \emptyset$, the above formula can be expressed in a more geometrical way
$$w_u(e)=\frac{c_u}{\hbox{dist}(e,\R^+\mathcal{U}(u))}.$$
Thus, for a direction $e$ such that $w_u(e)>c_u$, there is $\xi\in\mathcal{U}(U)$ such that $c_u=w_u(e)\sqrt{1-(\xi\cdot e)^2}$. Then,  the species at $w_u(e)e$ can be regard as coming from $(w_u(e)\xi\cdot e)\xi$ initially after a unit time. The speed on the path from $(w_u(e)\xi\cdot e)\xi$ to $w_u(e)e$ is $c_u$. If $w_u(e)=c_u$, the species at $w_u(e)e$ can be regard as coming from $0$ with speed $c_u$. Clearly, from the formula \eqref{omega-u}, the map $e\mapsto w_u(e)\in [c_u,+\infty)$ is continuous in $\mathbb{S}^{N-1}$. As far as the uniformity of the convergence \eqref{conv:ss} with respect to $e\in\mathbb{S}^{N-1}$ is concerned, they also proved that under the same conditions, the spreading set $\mathcal{W}_u$ of $u$ is given by 
\be\label{W-u}
\mathcal{W}_u=\left\lbrace re: 0\le r<w_u(e) \right\rbrace =\mathbb{R}^{+}\mathcal{U}(U)+B_{c_u}.
\ee
Its boundary $\partial\W_u$ is continuous by the continuity of $w_u(e)$.
For more discussions about the condition~\eqref{BUUU} and formulae~\eqref{omega-u}-\eqref{W-u}, we refer to \cite{HR}.
 
In absence of the species $u$, $c_{v}:=2$ is the minimal speed associated with  the equation $v_t=\Delta v+v(1-v)$ and let $\psi(x\cdot e- c_{v}t)$ be the traveling front with the minimal speed and $\psi(0)=1/2$. Then, equipped with the initial value $v(0,x)=v_0(x)=\mathbbm{1}_V$, there are similar formulae for $v$ as \eqref{omega-u}-\eqref{W-u}. We write down as follows:  if there is $\rho>0$ such that $V_{\rho}\neq \emptyset$ and $V$ satisfies
\be\label{BVUV}
\mathcal{B}(V)\cup	\mathcal{U}(V_\rho)= \mathbb{S}^{N-1},
\ee
then for every $e\in\mathbb{S}^{N-1}$, the spreading speed $w_v(e)$ of $v$ is given by
\be\label{omega-v}
	w_v(e)=\sup_{\xi\in \mathcal{U}
	(V), ~\xi\cdot e\ge 0}
\frac{c_v}{\sqrt{1-(\xi\cdot e)^{2}}},
\ee
with the conventions: $w_v(e)=c_v$ if there is no $\xi \in \mathcal{U}(V)$ such that $\xi \cdot e\ge 0$, and $w_v(e)=+\infty$ if $e\in \mathcal{U}(V)$, and the spreading set $\mathcal{W}_v$ of $v$ is given by 
\be\label{W-v}
\mathcal{W}_v=\left\lbrace re: 0\le r<w_v(e) \right\rbrace =\mathbb{R}^{+}\mathcal{U}(V)+B_{c_v}.
\ee
The map $e\mapsto w_v(e)\in [c_v,+\infty)$ is continuous in $\mathbb{S}^{N-1}$ and hence, $\partial \mathcal{W}_v$ is also continuous. 

\subsection{Interaction of two species}

By the definition of the spreading set and above formulae for single species, one can image that the species $u$ spreads almost as $t\W_u$ at large time in the absence of species $v$ (that is, $V=\emptyset$) and  the species $v$ spreads almost as $t\W_v$ at large time in the absence of species $u$ (that is, $U=\emptyset$). The problem arising here is how they spread when both species appear in the space. 

For simplicity of presentation, we only consider the strong competition case in this paper, that is,
\begin{itemize}
\item[\textbf{(A1)}] (strong competition) the competition coeffcients $a$, $b>1$.
\end{itemize}
Part of our results are independent of the choice of $a$ and $b$. For example, Lemmas~\ref{lemma:bar-e=0}-\ref{lemma:eball-c}, \ref{lemma:eball-v1}, \ref{lemma:eball-v} do not need the assumption (A1).
In the strong competition case, the system \eqref{pro1} has two stable equilibria $(0,1)$ and $(1,0)$.
One typical tool to describe the interaction of two species and their spreading properties is the traveling front of the system \eqref{pro1}. For the one-dimensional case of \eqref{pro1} ($N=1$), there exists a traveling front connecting $(0, 1)$ and $(1, 0)$, see Gardner \cite{GE} and Kan-On \cite{Kan}, which can be trivially extended to high-dimensional spaces. Precisely, for the high-dimensional case ($N\geq 2$), there is a unique (up to translations) traveling front of (\ref{pro1}) in the form of $(u, v)(t,x)=(\Phi,  \Psi)(x\cdot e-c_{uv}t)$ for every $e\in\mathbb{S}^{N-1}$. That is, there exists a unique speed  $c_{uv}$ satisfies 
$$c_{uv}\in (-2, 2\sqrt{dr})$$
 such that  the profiles $\Phi(\xi)$, $\Psi(\xi)$ satisfy
\begin{equation}\label{pro2}
	\begin{cases}
		d\Phi''+c_{uv}\Phi'+
		r\Phi(1-\Phi-a\Psi)=0,~ &\xi \in \mathbb{R},\\
		\Psi''+c_{uv}\Psi'+\Psi(1-\Psi-b\Phi)=0,~
		&\xi \in \mathbb{R},\\
		\left( \Phi, ~\Psi\right)(-\infty)=(1, 0), ~ 
		\left( \Phi, ~\Psi\right)(+\infty)=(0, 1),\\
		\Phi'<0, ~\Psi'>0,~&\xi \in \mathbb{R}.
	\end{cases}
\end{equation}
We fix the traveling front by $\Phi(0)=1/2$.
 More properties for the profiles $\Phi$, $\Psi$ can be referred to \cite{Kan,KF}. The fact $-c_v=-2<c_{uv}<c_u=2\sqrt{dr}$ implies that the competition of two species slows down the spreading speed of a single species.
 
In the one-dimensional case, Carr\`{e}re \cite{Car} considered the initial value problem when the two strong competition species are initially absent from the right half-line $x > 0$, and the slow one dominates the fast one on $x < 0$. She found that the fast one will invade the right space at its Fisher-KPP speed, and the slow one will be replaced by or will invade the fast one at the speed $c_{uv}$.  Then, Peng, Wu and Zhou in \cite{PWZ} improved the results of  Carr\`{e}re \cite{Car} and derived the sharp estimates of the 
spreading speeds based on the technology of \cite{Ham}. It is worth mentioning that the spreading dynamics is more complicated in the strong-weak competition case (that is, $a<1<b$) and the weak competition case (that is, $a,\, b<1$), even if the same problem as \cite{Car} was addressed, see \cite{GL,LL,LLL}. Also refer to \cite{AX} for the critical competition case (that is, $a=b=1$).

Throughout this paper, we also assume that 
\begin{itemize}
	\item[\textbf{(A2)}] ~the speed $c_{uv}>0$.
\end{itemize}
This assumption means that if we only observe the traveling front $(\Phi,  \Psi)(x\cdot e-c_{uv}t)$, the species $u$ is the winner in the competition since its territory, namely the region where $(u,v)$ is close to $(1,0)$, is expending at the speed $c_{uv}$ in the direction $e$ as $t\rightarrow +\infty$. Some sufficient conditions on parameters to ensure $c_{uv}>0$ can be referred to \cite{GLi,Kan} and we also refer to \cite{Gir,GN,MZO,RM} for some related discussions. For the case $c_{uv}<0$, that is, the species $v$ is the winner in the competition, one can get similar results through our analysis.

In our previous work \cite{BG}, we extended Carr\`{e}re's results to the high dimensional case for some specific initial conditions. In particular, we proved that when $U$ is large, then the species $u$ will spread successfully. 

\begin{proposition}[\cite{BG}]\label{proposition1}
Assume that {\rm \textbf{(A1)}-\textbf{(A2)}} hold.	There is $\rho>0$ such that the solution $(u,v)(t,x)$ of \eqref{pro1} associated with $U=B_\rho$ and $V$ being any set with $U\cap V=\emptyset$ satisfies 
	$$(u,v)(t,x)\rightarrow (1,0), \hbox{ locally uniformly in $\mathbb{R}^N$ as $t\rightarrow +\infty$}.$$
\end{proposition}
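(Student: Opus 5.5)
The plan is a comparison argument. The system \eqref{pro1} is monotone for the competitive order, in which $u$ is increased and $v$ is decreased. Since $0\le v_0=\1_V\le 1$, the worst case for $u$ is obtained by replacing $v$ by the solution $\bar v$ of \eqref{pro1} started from $v(0,\cdot)\equiv 1$ on $\R^N\setminus U$. By the comparison principle for competitive systems, the solution $(\underline u,\bar v)$ with data $(\1_{B_\rho},\1_{\R^N\setminus B_\rho})$ satisfies $\underline u\le u$ and $\bar v\ge v$ for all $t>0$. Because $v\ge0$ and $u\le1$, it therefore suffices to prove $(\underline u,\bar v)\to(1,0)$ locally uniformly. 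This reduces the statement to a single radially symmetric problem, independent of $V$.

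For the reduced problem I will build a radially expanding subsolution pair from the bistable front \eqref{pro2}, following the Fife--McLeod construction adapted to systems (as in Carr\`ere or Kan-On). Set
\[
\underline u(t,x)=\Phi\big(|x|-R(t)\big)-q(t),\qquad \bar v(t,x)=\Psi\big(|x|-R(t)\big)+q(t),
\]
with $q(t)=q_0e^{-\mu t}$ and $R(t)=R_0+\tfrac{c_{uv}}{2}t-K(1-e^{-\mu t})$. Assumption \textbf{(A2)}, namely $c_{uv}>0$, is exactly what allows the speed to stay positive even after absorbing the curvature term $\frac{N-1}{|x|}\Phi'$. Indeed, for $|x|\ge R(t)/2\ge R_0/2$ this term is bounded by $\frac{2(N-1)}{R_0}|\Phi'|$, which is smaller than $\frac{c_{uv}}{2}|\Phi'|$ once $R_0$ is large. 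Near $|x|=0$ the profiles are exponentially close to $(1,0)$, so the subsolution inequalities hold trivially there, up to the $q$-terms. Stability of $(1,0)$ and $(0,1)$ under \textbf{(A1)} handles the tails, using the usual choice of the constants $\mu$, $K$, $q_0$ in the regions where $\Phi$ is near $0$ or near $1$. In the middle region, where $\Phi'$ is bounded away from $0$, the negative term $-\dot R\,\Phi'$ dominates.

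The initial ordering is the remaining issue. The reduced data are an indicator pair, not a smooth function above the subsolution, so I first run the reduced system for a short time $t_0$. Parabolic estimates together with $\bar v\le 1$ and $\underline u\ge$ the solution of $u_t=d\Delta u+ru(1-u-a)$ then give $\underline u(t_0,\cdot)\ge 1-\delta$ on $B_{\rho-1}$ and $\bar v(t_0,\cdot)\le \delta$ there, for $\rho$ large. I will take $R_0\approx\rho/2$ and $q_0$ slightly larger than $\delta$, so that $\Phi(|x|-R_0)-q_0\le\underline u(t_0)$ and $\Psi(|x|-R_0)+q_0\ge\bar v(t_0)$. Outside $B_{\rho-1}$ the construction needs $\Phi-q_0\le 0$ and $\Psi+q_0\ge1$, which holds for $|x|\ge R_0+L$ with $L$ fixed and $\rho\ge 2(R_0+L)$, since we may truncate by $\max(\cdot,0)$ and $\min(\cdot,1)$.

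I expect this step to be the main obstacle. The estimate $\underline u(t_0)\ge1-\delta$ requires $u$ to grow from $1$ against full competition $v\le1$, where $u$ decays at rate $r(a-1)$. For small $t_0$, however, $u$ stays close to $1$ in the interior of $B_\rho$, and $v$ stays close to $0$ there because $v$ only diffuses in from outside. A heat-kernel estimate on $[0,t_0]$ with $\rho\gg1$ should suffice. Finally, comparison gives $\underline u\ge\Phi(|x|-R(t))-q(t)\to1$ and $\bar v\to0$ locally uniformly, because $R(t)\to\infty$. Hence $(u,v)\to(1,0)$.
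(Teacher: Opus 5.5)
Your comparison step is correct and is exactly how the paper proves the statement. Since $V\cap B_\rho=\emptyset$, the data $(\1_{B_\rho},\1_V)$ lie above the data \eqref{eq-uv-rho} of $(\underline u_\rho,\overline v_\rho)$ in the competitive order, and the paper then simply invokes Lemma~\ref{lemma:sub}, i.e.\ \cite[Lemma~2.1]{BG}.

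The gap is in your own proof of that expansion lemma. The pair $\underline u=\Phi(\xi)-q(t)$, $\bar v=\Psi(\xi)+q(t)$, with $\xi=|x|-R(t)$, is not a sub/supersolution pair under \textbf{(A1)}. Using \eqref{pro2} one gets
\[
\underline u_t-d\Delta\underline u-r\underline u(1-\underline u-a\bar v)=\Big(c_{uv}-\dot R-\frac{d(N-1)}{|x|}\Big)\Phi'+\mu q+rq\big[(a-1)(\Phi-q)+1-\Phi-a\Psi\big].
\]
Where $(\Phi,\Psi)\approx(1,0)$, which is a whole neighbourhood of the origin at all times, the $q$-part equals $q(t)\,[\mu+r(a-1)(1-q)]>0$. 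The only favourable term there, $(c_{uv}/2+K\mu e^{-\mu t})\Phi'$, is exponentially small in $|\xi|$. In particular, with $q_0\ge\delta$ fixed and $R_0\approx\rho/2$ large, as in your scheme, the inequality fails near $x=0$.

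Stability of $(1,0)$ does not rescue this. In the variables $(p,s)=(1-u,v)$ the linearization there is $\dot p=-rp+ras$, $\dot s=-(b-1)s$, so the direction $p=s$ is not contracted in its first component when $a>1$. With constant weights $(\Phi-p\,q,\ \Psi+s\,q)$ you would need $p>as$ near $(1,0)$ and, symmetrically (using $\dot p=-r(a-1)p$, $\dot s=bp-s$ at $(0,1)$), $s>bp$ near $(0,1)$. These are incompatible because $ab>1$.

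The missing idea is to let the weights depend on $\xi$, switching between the two regimes on a compact interval $[-L,L]$. There $|\Phi'|$ and $\Psi'$ are bounded below, so the derivative terms of the weights and all $O(q)$ errors are absorbed by $\tfrac{c_{uv}}2|\Phi'|$ and $\tfrac{c_{uv}}2\Psi'$ once $q_0$ is small. Alternatively, quote Lemma~\ref{lemma:sub} as the paper does.

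Three smaller points.
\begin{enumerate}
\item $\Phi(|x|-R(t))$ has a conical maximum at $x=0$, where $-\frac{d(N-1)}{|x|}\Phi'\to+\infty$. So the inequality does not hold trivially near the origin; replace $|x|$ by a smooth radial function with zero gradient at $0$.
\item The favourable term in the middle region is $(c_{uv}-\dot R)\Phi'=(c_{uv}/2+K\mu e^{-\mu t})\Phi'<0$, coming from the subsolution moving slower than the front. It is not $-\dot R\Phi'$, which is positive.
\item The step you flag as the main obstacle is not one. After truncating by $\max(\cdot,0)$ and $\min(\cdot,1)$, the pair is already ordered with $(\1_{B_\rho},\1_{\R^N\setminus B_\rho})$ at $t=0$. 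This holds as soon as $\rho-R_0$ exceeds the width beyond which $\Phi\le q_0$ and $\Psi\ge1-q_0$, so no short-time heat-kernel estimate is needed.
\end{enumerate}
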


Once the species $u$ spreads successfully, the spreading speed of $u$ is determined by the comparison of the speeds of single species in different directions. When $U$ and $V$ are compact and $U\cap V=\emptyset$, remember that the speeds of single species $u$ and $v$ are $c_u$ and $c_v$ in all directions respectively and we have the following results in \cite{BG}. If $c_u>c_v$, then $v$ extincts and $u$ spreads at speed $c_u$, that is,
\begin{eqnarray*}
\left\{\begin{array}{lll}
			\displaystyle\lim_{t\to +\infty}\sup_{|x|\le ct}
			|u(t, x)-1| =0, &&\text{for}~
			0<c<c_{u},\\
			\displaystyle\lim_{t\to +\infty}\sup_{|x|\ge ct}
			|u(t, x)| =0, && \text{for}~
			c>c_{u},\\
			\displaystyle\lim_{t\to +\infty}\sup_{x\in\mathbb{R}^N} |v(t, x)|=0.&&
		\end{array}
		\right.
	\end{eqnarray*}	
If $c_u<c_v$,	then $v$ spreads at speed $c_v$ and $u$ invades $v$ at speed $c_{uv}$, that is,
	\begin{eqnarray*}
	\left\{	\begin{array}{lll}
			\displaystyle\lim_{t\to +\infty}\sup_{|x|\le ct}
			\left\lbrace |u(t, x)-1| +
			|v(t, x)|\right\rbrace =0, &&\text{for}~
			0<c<c_{uv},\\
			\displaystyle\lim_{t\to +\infty}\sup_{c_{1}t\le|x|\le c_{2}t}
			\left\lbrace |u(t, x)| +|v(t, x)-1|\right\rbrace 
			=0, &&\text{for}~
			c_{uv}<c_{1}\le c_{2}<c_{v},\\
			\displaystyle\lim_{t\to +\infty}\sup_{|x|\ge ct}
			\left\lbrace |u(t, x)| +
			|v(t, x)|\right\rbrace =0, &&\text{for}~
			c>c_{v}.
		\end{array}
		\right.
	\end{eqnarray*}
For the critical case $c_u=c_v$, it is difficult to determine the behavior of the solution which is highly depending on both the parameters $d$, $r$ and the sizes of the initial supports. 

On the other hand, when $U$ is a general measurable set and $V$ is its complementary set, that is, $V=\R^N\setminus U$, we in \cite{BG} got a formula of spreading speed for $u$ and spreading sets for $u$ and $v$. If  \textbf{(A1)}-\textbf{(A2)} hold, $U$ satisfies $U_{\rho}\neq \emptyset$ and 
	$$\mathcal{B}(U)\cup	\mathcal{U}(U_{\rho})=
		\mathbb{S}^{N-1},$$
	where $\rho$ is given by Proposition~\ref{proposition1},  then for every $e\in \mathbb{S}^{N-1}$, the spreading speed of $u$ is given by
	\begin{equation}\label{unuve}
		w_{uv}(e)=\sup_{\xi\in \mathcal{U}
			(U), ~\xi\cdot e\ge 0}
		\frac{c_{uv}}{\sqrt{1-(\xi\cdot e)^{2}}}
	\end{equation}
with the conventions: $w_{uv}(e)=c_{uv}$ if there is no $\xi \in \mathcal{U}(U)$ such that $\xi \cdot e\ge 0$, and $w_{uv}(e)=+\infty$ if $e\in \mathcal{U}(U)$.
The spreading set of $u$ is then given by 
\be\label{W-uv}
\W_{uv}=\left\{ re:e\in \mathbb{S}^{N-1}, 0\le r<w_{uv}(e)\right\}=\mathbb{R}^{+}\mathcal{U}(U)+B_{c_{uv}}.
\ee
The map $e\mapsto w_{uv}(e)\in [c_{uv},+\infty)$ is continuous in $\mathbb{S}^{N-1}$ and $\partial \mathcal{W}_{uv}$ is also continuous.
Meantime, because of $U\cup V=\R^N$, the spreading set of $v$ is $\R^N\setminus \overline{\W_{uv}}$.

\subsection{Main results}

In this paper, we consider the more general case, say, $U$ and $V$ are all general measurable sets and do not necessarily satisfy $U\cup V=\R^N$. As we have mentioned, the spreading speeds of $u$ and $v$ for \eqref{pro1} are relying on the comparison of the speeds of single species. Recall that $w_u(e)$ and $w_v(e)$ given by \eqref{omega-u} and \eqref{omega-v} are the spreading speeds of single species $u$ and $v$ respectively in the direction $e\in\mathbb{S}^{N-1}$.  Based on the observation of the results in one dimension, it seems to be natural to expect that for every direction $e\in\mathbb{S}^{N-1}$, 
\begin{itemize}
\item[(i)] if $w_u(e)>w_v(e)$, then the spreading speed of $u$ is $w_u(e)$ and $v$ extincts in this direction;

\item[(ii)] if $w_u(e)<w_v(e)$, then $v$ spreads at speed $w_v(e)$ and $u$ invades it at speed $w_{uv}(e)$.
\end{itemize}
However, assertion (i) needs some additional conditions, because of the effect of unboundedness of initial supports and assertion (ii) is not true in general. 
Let us define the following notions.

\begin{definition}\label{def:Path}
For a nonempty set $U$ and a point $x$, let $\overline{x}$ be a projection of $x$ on $U$, that is, $|x-\overline{x}|=\hbox{\rm dist}(x,U)$. A projection path of $x$ onto $U$ is the set
$$P(x,U):=\{\lambda \overline{x}+(1-\lambda)x:\, \lambda\in [0,1]\}.$$ 
\end{definition}

If $x\in \overline{U}$, then $\overline{x}=x$ and $P(x,U)=\{x\}$. Clearly, the projection path of $x$ onto $U$ is not unique in general.

\begin{definition}\label{def:starshape}
The set $K$ is called star-shaped with respect to a closed set $U$ if  there is a projection path $P(x,U)\subset K$ for every $x\in K$.
\end{definition}

We call that the empty set $\emptyset$ is star-shaped with respect to any closed set. For nonempty sets $K$ and $U$, one should have $\partial K\cap \partial U\neq \emptyset$ if $K$ is star-shaped with respect to $\overline{U}$. By definitions of $\W_u$, $\W_v$ and $\W_{uv}$, one knows that they are all star-shaped with respect to $0$.

For $x\in\R^N\setminus \{0\}$, the notation $\widehat{x}$ means $x/|x|$. Then, we have the following theorem.

\begin{theorem}\label{Th1}
	Assume that {\rm \textbf{(A1)}-\textbf{(A2)}} 	hold, $U_{\rho}\neq\emptyset$ where $\rho$ is given by Proposition~\ref{proposition1}, $V_{\alpha}\neq \emptyset$ for some $\alpha>0$ and $U$, $V$ satisfy 
	\begin{equation}\label{eq:BUS}
			\mathcal{B}(U)\cup	\mathcal{U}(U_{\rho})=\mathbb{S}^{N-1},\quad 	\mathcal{B}(V)\cup \mathcal{U}(V_{\alpha})=\mathbb{S}^{N-1}.
	\end{equation}	
For every $e\in \mathbb{S}^{N-1}$, there holds that if there is a projection path $P(e,\R^+\mathcal{U}(U)\cup \{0\})$ such that 
\be\label{Pathwu>wv}
w_u(\widehat{\xi})>w_v(\widehat{\xi}), \hbox{ for all $\xi\in P(e,\R^+\mathcal{U}(U)\cup\{0\})\setminus\{0\}$},
\ee 
then
\begin{eqnarray}\label{Sspeed-u}
\left\{\begin{array}{lll}
\displaystyle\lim_{t\to +\infty}\sup_{0\le s\le c}\left \{ |u(t, ste)-1| + |v(t, ste)|\right \} =0, &\text{for }		0\le c<w_u(e),&\\
\displaystyle\lim_{t\to +\infty}\sup_{s\ge c} |u(t, ste) |=0, &\text{for }c>w_u(e),&\\
\displaystyle\lim_{t\to +\infty}\sup_{r\ge 0} |v(t, re)|  =0, &&
\end{array}
\right.
\end{eqnarray}
and if  there is a projection path $P(e,\R^+\mathcal{U}(V)\cup \{0\})$ such that 
\be\label{Pathwu<wv}
w_u(\widehat{\xi})<w_v(\widehat{\xi}), \hbox{ for all $\xi\in P(e,\R^+\mathcal{U}(V)\cup\{0\})\setminus \{0\}$},
\ee
then
\begin{eqnarray}\label{Sspeed-v}
\left\{\begin{array}{lll}
\displaystyle\lim_{t\to +\infty}\sup_{0\le s\le c}\left \{ |u(t, ste)-1| + |v(t, ste)|\right \} =0, &\text{for }		0\le c<w_{uv}(e),&\\
\displaystyle\lim_{t\to +\infty}\sup_{c_1\le s\le c_2}\left \{ |u(t, ste)| + |v(t, ste)-1|\right \} =0, &\text{for } w_{u}(e)<c_1<c_2<w_v(e),&\\
\displaystyle\lim_{t\to +\infty}\sup_{ s\ge c}\left \{ |u(t, ste)| + |v(t, ste)|\right \} =0, &\text{for } c>w_v(e).&
\end{array}
\right.
\end{eqnarray}
\end{theorem}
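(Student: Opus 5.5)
The proof rests on comparison with the single-species problems together with propagation along the projection path. Since \eqref{pro1} is competitive, $(\bar u,\underline v)$ is a supersolution pair whenever $\bar u$ solves the Fisher--KPP equation \eqref{eq:u} with $\bar u(0)=\1_U$ and $\underline v$ solves $v_t=\Delta v+v(1-v-b\bar u)$; the symmetric statement holds with the roles of $u$ and $v$ exchanged. This gives the a priori bounds $u\le \bar u$ and $v\le \bar v$, where $\bar u,\bar v$ are the single-species solutions. The Hamel--Rossi formulae \eqref{omega-u}--\eqref{W-v} and \eqref{BUUU}, \eqref{BVUV} then yield at once the upper estimates: $u(t,ste)\to0$ for $s\ge c>w_u(e)$ in \eqref{Sspeed-u}, and both components vanish beyond $w_v(e)$ (and beyond $w_u(e)$ for $u$) in \eqref{Sspeed-v}. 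All remaining assertions are lower bounds for the winning species in a given region.

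\textbf{First case, \eqref{Pathwu>wv}.} Let $\bar e$ be the projection of $e$ on $\R^+\mathcal U(U)\cup\{0\}$. Along the ray $\R^+\bar e$ (or near $0$ if $\bar e=0$), the set $U_\rho$ contains balls $B_\rho(x_n)$ with $|x_n|\to\infty$ and $\widehat{x_n}\to\widehat{\bar e}$, since \eqref{eq:BUS} holds. Proposition~\ref{proposition1}, applied with time shifts, shows that $u$ reaches $1$ near such balls. Because $w_u(\widehat{\bar e})>w_v(\widehat{\bar e})$, $v\le\bar v$ is already small on the portion of $t\R^+\bar e$ lying outside $t\overline{\W_v}$ at order $t$. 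Using the single-species KPP spreading for $u$ there, $u$ fills a neighbourhood of $t\lambda\bar e$ at large times.

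Then I propagate along the segment $P(e,\cdot)$ from $\bar e$ to $e$. At each point $\xi$ of the path, the strict inequality $w_u(\widehat\xi)>w_v(\widehat\xi)$ and continuity give a uniform gap $\delta>0$ on the compact path minus a neighbourhood of $0$. So the Fisher--KPP front of $u$, which moves at speed $c_u$ perpendicular from $\R^+\mathcal U(U)$ by the geometric interpretation of \eqref{omega-u}, always travels in a region where $v$ is exponentially small, being ahead of $t\W_v$. I would build subsolutions of the form $\underline u=\phi(|x-y(t)|-(c_u-\epsilon)t)-\eta$ (radially expanding KPP balls) in a moving frame. These absorb the small $v$ term through $-rauv\ge -ra\eta'u$ and yield $u\to1$, $v\to0$ on $\{ste:0\le s\le c\}$ for $c<w_u(e)$.

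The global extinction $\sup_{r\ge0}v(t,re)\to0$ then follows. Outside $t\W_v$ this is the bound $v\le \bar v$. Inside, $u\approx1$ combined with $b>1$ gives, via a comparison ODE $v_t\le\Delta v+v(1-v-b(1-\eta))$, exponential decay of $v$.

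\textbf{Second case, \eqref{Pathwu<wv}.} Symmetrically, $v$ invades the region $t\{ s e: w_u(e)<s<w_v(e)\}$ with $u$ small there, using the path to $\R^+\mathcal U(V)\cup\{0\}$. Then $v\to1$ there, by the same KPP-ball subsolutions with $u\le\bar u$ exponentially small. Inside, I use the bistable front $(\Phi,\Psi)$ and the $U\cup V=\R^N$ analysis of \cite{BG} leading to \eqref{unuve}. I would construct Fife--McLeod type sub/supersolutions
\[
\bigl(\Phi,\Psi\bigr)\bigl(|x-y|-c_{uv}t\pm\zeta(t)\bigr)\mp q(t)
\]
with $q$ decaying exponentially and $\zeta$ bounded, in the monotone ordering $(u\uparrow, v\downarrow)$. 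These expanding from balls along $\R^+\mathcal U(U)$ give the lower bound $w_{uv}(e)$ for $u$ and the matching upper bound.

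The main obstacle is the interface zone between the speeds $w_{uv}(e)$ and $w_u(e)$. One must show that $u$ does not spread at the KPP speed into the region where $v\approx1$, and that the $v$-region is genuinely established early enough that the bistable supersolution can be placed. I expect to handle this by first fixing a time $T$ after which $v\ge1-\eta$ on a conical shell, and then starting the bistable supersolution from that shell with an error controlled by $\eta$.
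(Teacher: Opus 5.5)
Your architecture matches the paper's: single-species supersolutions for the upper bounds, the bistable comparison giving $\W_{uv}$, and KPP-type propagation along the projection path where the competitor is small. But the propagation step has two real gaps.

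\textbf{The subsolution's support.} Your subsolution $\phi(|x-y(t)|-(c_u-\epsilon)t)-\eta$ is supported on a ball of radius of order $t$, so it needs $v$ small on such a ball. Hypothesis \eqref{Pathwu>wv} only controls $v$ in a thin conical neighbourhood of the rays $\R^+\widehat\xi$, $\xi$ on the path, beyond distance $s\,w_v(\widehat\xi)$. A ball of radius $\sim c_u s$ generally enters $s\W_v$, where nothing is known about $v$ a priori. You need a compactly supported translating subsolution of speed $<c_u$, such as $\sigma e^{-\frac{c}{2d}(x\cdot\nu-ct)}\Phi_R(x-ct\nu)$ from Lemma~\ref{Rsigma}. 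It only requires $v$ small in a moving ball of fixed radius, and a Liouville/hair-trigger limit then upgrades positivity to $u\to1$.

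\textbf{The range of $c$ and persistence.} Even with the right subsolution, the path argument does not cover all $c<w_u(e)$. Requiring the bump to stay ahead of $s\W_v$ reduces to $c\sqrt{1-|\overline{e}|^2}>\sup_{\xi}w_v(\widehat\xi)\sqrt{1-(\widehat\xi\cdot\widehat{\overline{e}})^2}$. This uses the identity $w_u(\widehat\xi)\sqrt{1-(\widehat\xi\cdot\widehat{\overline{e}})^2}=c_u$ along the path, which holds because all path points share the projection $\overline{e}$. The resulting threshold on $c$ is at least $w_v(e)$, and the identity also shows it is $<w_u(e)$. Below it, the route to $cte$ at time $t$ crosses $v$'s territory; for $c<w_v(e)$ the endpoint itself lies in $t\W_v$. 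The missing idea is persistence: reach $cte$ at the earlier time $ct/c_1$ with $c_1$ above the threshold. Then compare with $(\underline u_\rho,\overline v_\rho)$ from Lemma~\ref{lemma:sub}, which is where $c_{uv}>0$ is used. Your comparison-ODE argument for $\sup_{r}v(t,re)\to0$ presupposes exactly this persistence.

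\textbf{The case \eqref{Pathwu<wv}.} Here you aim at a matching upper bound $w_{uv}(e)$ for $u$ and call the zone $[w_{uv}(e),w_u(e)]$ the main obstacle. The theorem asserts nothing in that zone, and the bound is false in general. By Theorem~\ref{Th2}, $w_{uv}(e)\le s_u(e)<w_u(e)$. In the cylinder example of Subsection~\ref{subs-1.4}, $s_u(e)>c_{uv}=w_{uv}(e)$ for directions slightly above the horizontal, even though \eqref{Pathwu<wv} holds there. As the paper remarks after Theorem~\ref{Th1}, neighbouring directions with $w_u>w_v$ can push $u$ forward. Since $u\to0$ for $s>w_u(e)$ already follows from $u\le\bar u$, the actual work in the second line of \eqref{Sspeed-v} is the lower bound $v\to1$ on $(w_u(e),w_v(e))$. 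It takes two steps. First, run the symmetric path argument above the threshold $\sup_\xi w_u(\widehat\xi)\sqrt{1-(\widehat\xi\cdot\widehat{\overline{e}})^2}/\sqrt{1-|\overline{e}|^2}$, which is at least $w_u(e)$. Second, obtain persistence down to $w_u(e)$ by placing a compactly supported stationary subsolution (Lemma~\ref{lemm3}) in the set where $u\le\delta$ throughout $[ct/c_1,t]$.
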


The condition \eqref{Pathwu>wv} means that not only the spreading speed of single species $u$ is larger than the spreading speed of single species $v$ in the direction $e$, but also in the directions $\widehat{\xi}$ with $\xi\in P(e,\R^+\mathcal{U}(U)\cup\{0\})\setminus\{e,0\}$. Then, under this condition, one can have that  the spreading speed of $u$ is $w_u(e)$ and $v$ extincts in the direction $e$. We explain roughly why we need this condition here. For a direction $e$ such that $w_u(e)>c_u$, we have mentioned that one can regard the species at $w_u(e)e$ as coming from $(w_u(e)\xi\cdot e)\xi$ with speed $c_u$ where $\xi\in \mathcal{U}(U)$ such that $\hbox{dist}(e,\R^+\mathcal{U}(U))=\sqrt{1-(\xi\cdot e)^2}$. So, on the path from $(w_u(e)\xi\cdot e)\xi$ to $w_u(e)e$, there should not be any species $v$, otherwise the competition slows down the speed of $u$. This is guaranteed by our condition. Notice that $(\xi\cdot e)\xi$ is a projection of $e$ on $\R^+\mathcal{U}(U)\cup \{0\}$. For a direction $e$ such that $w_u(e)=c_u$, the same explanation applies and $0$ is a projection of $e$ on $\R^+\mathcal{U}(U)\cup \{0\}$. If not only the spreading speed of single species $v$ is larger than the spreading speed of single species $v$ in the direction $e$, but also in the directions $\widehat{\xi}$ with $\xi\in P(e,\R^+\mathcal{U}(V)\cup\{0\})\setminus \{0\}$, that is, under condition~\eqref{Pathwu<wv}, one can have that $v$ spreads at speed $w_v(e)$ and $u$ invades it at a speed between $w_{uv}(e)$ and $w_u(e)$ in the direction $e$. The reason for the necessity of \eqref{Pathwu<wv} is similar as \eqref{Pathwu>wv}. Under \eqref{Pathwu<wv}, the invading speed of $u$ is less than $w_u(e)$ which is due to the competition and is possibly greater than $w_{uv}(e)$, because the spreading in directions $e'$ close to $e$ may accelerate the speed in the direction $e$ if $w_u(e')>w_v(e')$. However, the only information on the projection path $P(e,\R^+\mathcal{U}(V)\cup\{0\})$ is not enough to deduce the precise spreading speed of $u$.

To have a clear sight of the spreading for $u$ and $v$ and to have a precise spreading speed of $u$ for the directions $e$ such that $w_u(e)<w_v(e)$,  we deduce the spreading sets under stronger conditions. 

\begin{theorem}\label{Th2}
Assume that the assumptions in Theorem~\ref{Th1} hold. If 
\be\label{Starshape}
\begin{aligned}
\R^+(\W_u\setminus \overline{\W_v})&\cup \{0\} \hbox{ and } \R^+(\W_v\setminus \overline{\W_u})\cup \{0\} \hbox{ are star-shaped}\\
&\hbox{  with respect to $\R^+\mathcal{U}(U)\cup \{0\}$ and $\R^+\mathcal{U}(V)\cup \{0\}$ respectively},
\end{aligned}
\ee
and
\be\label{C-A}
\overline{\{e\in\mathbb{S}^{N-1}:\, w_u(e)>w_v(e)\}}=\{e\in\mathbb{S}^{N-1}:\, w_u(e)\ge w_v(e)\},
\ee 
then the spreading set of $u$ is given by 
\begin{equation}\label{Su}
\begin{aligned}
\mathcal{S}_u:=&\W_{uv}\cup \Big(\underset{\substack{e\in\mathbb{S}^{N-1}, \\ w_u(e)\ge w_v(e)}}{ \cup} \underset{0<\tau<1}{\cup} B_{\tau c_{uv}}((1-\tau)w_u(e)e)\Big)\\
=&\{re:\, e\in\mathbb{S}^{N-1},\, 0\le r<s_u(e)\},
\end{aligned}
\end{equation}
with the convention that $\cup_{0<\tau<1} B_{\tau c_{uv}}((1-\tau)w_u(e)e)=\R^+ e +B_{c_{uv}}$ if $w_u(e)=+\infty$,
and the spreading set of $v$ is given by 
\be\label{Sv}
\mathcal{S}_v:=\W_v\setminus \overline{\mathcal{S}_u}=\{re:\, e\in\mathbb{S}^{N-1}\hbox{ such that }w_u(e)<w_v(e),\, s_u(e)<r<w_v(e)\},
\ee
where
\be\label{F-su}
s_u(e):=\sup_{\substack{\xi\in\mathbb{S}^{N-1},\\ w_u(\xi)\ge w_v(\xi), \\ \xi\cdot e> c_{uv}/w_u(\xi)}}\sup_{0\le c<w_u(\xi)}\frac{c c_{uv}}{\sqrt{1-(\xi\cdot e)^2} \sqrt{c^2 +c_{uv}^2}+c_{uv}\xi\cdot e },
\ee
with the convention that $s_u(e)=+\infty$ if $e\in \mathcal{U}(U)$ and $s_u(e)=c_{uv}$ if there is no $\xi\in\mathbb{S}^{N-1}$ such that $w_u(\xi)>w_v(\xi)$ and $\xi\cdot e> c_{uv}/w_u(\xi)$. Moreover, the map $e\mapsto s_u(e)$ is continuous in $\mathbb{S}^{N-1}$, $s_u(e)=w_u(e)$ for very $e\in\mathbb{S}^{N-1}$ such that $w_u(e)\ge w_v(e)$ and $w_{uv}(e)\le s_u(e)<w_u(e)$ for every $e\in\mathbb{S}^{N-1}$ such that $w_u(e)<w_v(e)$.
\end{theorem}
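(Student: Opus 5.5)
The plan is to prove the two inclusions defining a spreading set separately for $u$, and then obtain the statement for $v$ from Theorem~\ref{Th1} together with the single-species bounds. The geometric formula is handled first.

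\textbf{Step 1: the set $\mathcal{S}_u$ and the function $s_u$.} We first check the two descriptions of $\mathcal{S}_u$ in \eqref{Su} agree. Fix $\xi$ with $w_u(\xi)\ge w_v(\xi)$ and $0\le c<w_u(\xi)$. The union $\cup_{0<\tau<1}B_{\tau c_{uv}}((1-\tau)c\xi)$ is the open cone-like region (an ``ice-cream cone'') with apex $c\xi$ and half-opening angle $\theta$, where $\sin\theta=c_{uv}/\sqrt{c^2+c_{uv}^2}$, glued to $B_{c_{uv}}$. We compute where the ray $\R^+e$ exits this region. Elementary trigonometry gives the exit radius
\[
\frac{c\,c_{uv}}{\sqrt{1-(\xi\cdot e)^2}\sqrt{c^2+c_{uv}^2}+c_{uv}\,\xi\cdot e}
\]
whenever the ray actually meets the cone beyond $B_{c_{uv}}$, which is the condition $\xi\cdot e>c_{uv}/w_u(\xi)$. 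Taking suprema gives $s_u(e)$. The set $\W_{uv}$ is covered by the same formula with $\xi\in\mathcal{U}(U)$, $c\to\infty$, where $w_u=\infty$; this produces \eqref{unuve}, so $s_u\ge w_{uv}$.

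Continuity of $s_u$ will come from continuity of $w_u,w_v$ together with \eqref{C-A}. Condition \eqref{C-A} ensures that the constraint set $\{w_u\ge w_v\}$ is the closure of the open set $\{w_u>w_v\}$, so the supremum is lower and upper semicontinuous. If $w_u(e)\ge w_v(e)$, take $\xi=e$ and let $c\to w_u(e)$ to get $s_u(e)\ge w_u(e)$. The reverse inequality, and the strict bound $s_u<w_u$ when $w_u<w_v$, follow because each cone lies in $\W_u$: indeed $B_{c_{uv}}\subset B_{c_u}$, $c\xi\in \W_u$, and $\W_u=\R^+\mathcal U(U)+B_{c_u}$ is a convex-type neighborhood. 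A compactness argument then gives the strict inequality when $w_u(e)<w_v(e)$.

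\textbf{Step 2: lower bound for $u$.} Let $C\subset\mathcal{S}_u$ be compact. Points of $C$ in $\W_{uv}$ are handled by the argument of \cite{BG} for \eqref{unuve}. That argument uses Proposition~\ref{proposition1} with the balls $B_\rho\subset U_\rho$ placed along unbounded directions, followed by bistable-front comparison (planar or expanding-ball subsolutions built from $(\Phi,\Psi)$ moving at speed $c_{uv}-\epsilon$). For a point of the form $t x$ with $x\in B_{\tau c_{uv}}((1-\tau)c\xi)$, where $c<w_u(\xi)$, we pick $\xi'$ near $\xi$ with $w_u(\xi')>w_v(\xi')$ (by \eqref{C-A}). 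The star-shapedness \eqref{Starshape} provides a projection path for $\xi'$ inside $\R^+(\W_u\setminus\overline{\W_v})$, so \eqref{Pathwu>wv} holds and Theorem~\ref{Th1} gives $(u,v)\approx(1,0)$ on the segment $[0,c'\xi']t$, uniformly on compact subsets. At time $(1-\tau)t$, $u$ is therefore near $1$ on a ball of radius $o(t)$ around $(1-\tau)t c\xi$. During the remaining time $\tau t$, a radially expanding bistable subsolution, as in \cite{BG}, grows with speed $c_{uv}-\epsilon$ whatever $v$ does. Uniformity over $C$ follows by compactness and continuity.

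\textbf{Step 3: upper bound for $u$, and the result for $v$.} For $x\notin\overline{\mathcal{S}_u}$ there are two cases. If $|x|>w_u(\hat x)$, then $u\le$ the single-species solution, which is small there. Otherwise $w_u(\hat x)<w_v(\hat x)$ and $s_u(\hat x)<|x|$. Star-shapedness with respect to $\R^+\mathcal U(V)\cup\{0\}$ gives \eqref{Pathwu<wv} for nearby directions, and Theorem~\ref{Th1} shows that $v\approx1$, $u\approx0$ in the annular region $\{w_u<r<w_v\}$. We then construct a supersolution for $u$, and a matching subsolution for $v$, as a union of bistable fronts, each moving with speed $c_{uv}+\epsilon$ from the boundary of the $u$-territory. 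These fronts are launched at intermediate times from the regions where Theorem~\ref{Th1} controls the solution, and their envelope is exactly the ice-cream cone union. This envelope construction is the main obstacle, since the comparison must be done for the system (a competitive order, $u$ up and $v$ down) with fronts whose starting positions are only known asymptotically. I would first try to use the one-dimensional front $(\Phi,\Psi)(x\cdot\nu-(c_{uv}+\epsilon)t)$ in each direction $\nu$, shifted by $o(t)$, with a small exponential correction to absorb the error, and then take an infimum over $\nu$.

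Finally, $\mathcal{S}_v=\W_v\setminus\overline{\mathcal{S}_u}$ follows by combining the $u$-bounds with the comparison $v\le$ its single-species solution and with the middle line of \eqref{Sspeed-v}.
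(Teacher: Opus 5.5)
Your Steps~1 and~2 match the paper: the exit-radius formula is \eqref{F-r}, the inclusion of the ice-cream cones in $\W_u$ is the paper's representation $\W_u=\cup_{e}\cup_{0<\tau<1}B_{\tau c_{uv}}((1-\tau)w_u(e)e)$, and the lower bound is Lemma~\ref{Su-cone} (Theorem~\ref{Th1} along a direction satisfying \eqref{Pathwu>wv}, then the expanding ball of Lemma~\ref{lemma:sub} for the remaining time $\tau t$). The gap is Step~3, which is the genuinely new part of the theorem. There you must prove $(u,v)\to(0,1)$ uniformly on compact subsets of $\W_u\setminus\overline{\mathcal{S}_u}$, i.e.\ for radii $s_u(e)<r\le w_u(e)$ in directions with $w_u(e)<w_v(e)$. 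Without this, neither the superset property of $\mathcal{S}_u$ nor the subset property of $\mathcal{S}_v$ holds on that region. The middle line of \eqref{Sspeed-v} only covers $r>w_u(e)$, so your final paragraph on $\mathcal{S}_v$ also depends on Step~3.

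The mechanism you propose cannot be made to work as described. Comparing $(u,v)$ with a planar pair $(\Phi,\Psi)(x\cdot\nu-\cdot)$ requires $v\ge\Psi$, hence $v\approx 1$, on a whole half-space at the launch time. But $v\to 0$ outside $t\W_v$ and $u\to 1$ on $t\mathcal{S}_u$, so this never holds. An $o(t)$ shift or an exponential correction does not change this. Taking the infimum over $\nu$ of the $u$-components only enlarges the region where the $v$-component is near $1$. Localizing would need a maximum of $u$-supersolutions, which is not a supersolution.

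What is missing is a localized comparison together with a dual geometric property of $\mathcal{S}_u$.

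\begin{itemize}
\item \textbf{Localized tool.} This is Lemma~\ref{lem1}: the solution with data $(\delta,1-\delta)$ on $B_R$ and the worst case $(1,0)$ outside keeps a $(0,1)$-core that shrinks at speed at most $c_{uv}+\varepsilon$, whatever happens outside. So $cte$ is in the $v$-territory at time $t$ provided that, at the earlier time $t/(1+\tau)$, the solution is near $(0,1)$ on the scaled set $ce+B_{\tau(c_{uv}+\varepsilon)}(\tau ce)$.
\item \textbf{Geometric fact.} If $ce\notin\mathcal{S}_u+B_{2\varepsilon}$, then the outward cone $ce+\cup_{\tau>0}B_{\tau(c_{uv}+\varepsilon)}(\tau ce)$ does not meet $\mathcal{S}_u+B_{\varepsilon}$. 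The reason is that $\mathcal{S}_u+B_\varepsilon$ contains the inward cone $\cup_{0<\sigma<1}B_{\sigma(c_{uv}+\varepsilon)}((1-\sigma)z)$ of each of its points $z$, and that inward cone would contain $ce$ in its closure.
\item \textbf{Consequence.} By the definition of $\mathcal{S}_u$, the part of the outward cone inside $\W_u$ only contains directions with $w_v>w_u$. The cone then leaves $\W_u$ into $\W_v\setminus\overline{\W_u}$, where Lemma~\ref{lemma:eball-v} gives $(u,v)\approx(0,1)$; this is where star-shapedness with respect to $\R^+\mathcal{U}(V)\cup\{0\}$ enters.
\end{itemize}

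The comparison must therefore be anchored in $\W_v\setminus\overline{\W_u}$, not at the boundary of the $u$-territory. A single application of Lemma~\ref{lem1} is not enough, because the shrinking ball needed at a given time lies partly inside $s\W_u$. The paper iterates Lemma~\ref{lem1} over time steps of fixed length $T'$, propagating the $(0,1)$ state inward from the shell $\mathcal{H}_2$ just outside $\W_u$ into $\mathcal{H}_3$ (Lemma~\ref{Sv-cone}). This duality, together with the inductive marching argument, is the idea your proposal lacks.
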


\begin{figure}
\centering{
	\includegraphics[width=7 cm]{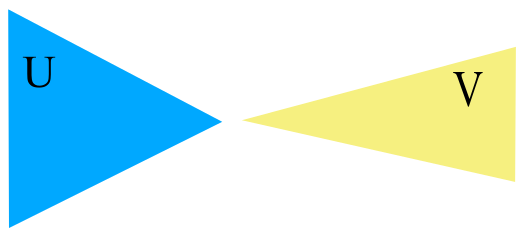}\quad \quad 
		\includegraphics[width=7 cm]{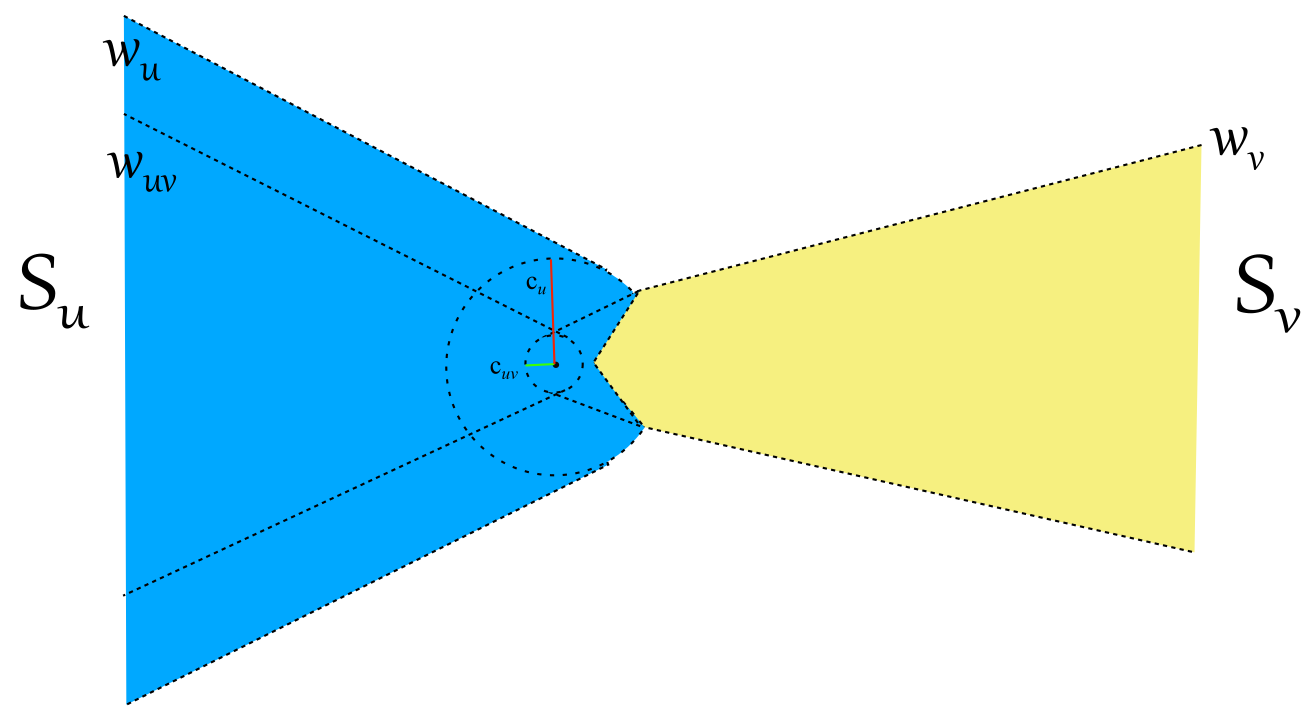}
		\caption[Fig. 1.]{Left: initial supports $U$ and $V$; Right: $\mathcal{S}_u$ (blue) and $\mathcal{S}_v$ (yellow).}\label{fig:1}}
\end{figure}

From the proof of Theorem~\ref{Th2}, we have 
$$\W_u=\underset{e\in\mathbb{S}^{N-1}}{\cup}\underset{0<\tau<1}{\cup}B_{\tau c_{uv}}((1-\tau) w_u(e) e),$$
 that is, \eqref{claim2-Wu}. By the definition of $\mathcal{S}_u$, it has the inclusion $\mathcal{S}_u\subset \W_u$. In particular, if $\{e\in\mathbb{S}^{N-1}:\, w_u(e)>w_v(e)\}=\mathbb{S}^{N-1}$, then $\mathcal{S}_u=\W_u$. Then, we have the following inclusion relation of sets $\W_{uv}\subset \mathcal{S}_u \subset \W_u$, $\mathcal{S}_v\subset \W_v$, see Figure~\ref{fig:1}. If $\{e\in\mathbb{S}^{N-1}:\, w_u(e)>w_v(e)\}=\emptyset$, then $\{e\in\mathbb{S}^{N-1}:\, w_u(e)\ge w_v(e)\}=\emptyset$ by \eqref{C-A} and
$$\underset{\substack{e\in\mathbb{S}^{N-1}, \\ w_u(e)\ge w_v(e)}}{ \cup} \underset{0<\tau<1}{\cup} B_{\tau c_{uv}}((1-\tau)w_u(e)e)=\emptyset,$$
which implies $\mathcal{S}_u=\W_{uv}$.
  If $\{e\in\mathbb{S}^{N-1}:\, w_u(e)>w_v(e)\}\neq \emptyset$ and $\mathcal{U}(U)=\emptyset$, then 
  $$\W_{uv}=B_{c_{uv}}\subset \underset{\substack{e\in\mathbb{S}^{N-1}, \\ w_u(e)\ge w_v(e)}}{ \cup} \underset{0<\tau<1}{\cup} B_{\tau c_{uv}}((1-\tau)w_u(e)e).$$
  If $\{e\in\mathbb{S}^{N-1}:\, w_u(e)>w_v(e)\}\neq \emptyset$ and $\mathcal{U}(U)\neq \emptyset$, then $w_u(e)=+\infty\ge w_v(e)$ for every $e\in\mathcal{U}(U)$ and
  $$\underset{\substack{e\in\mathbb{S}^{N-1}, \\ w_u(e)\ge w_v(e)}}{ \cup} \underset{0<\tau<1}{\cup} B_{\tau c_{uv}}((1-\tau)w_u(e)e)\supset \R^+\mathcal{U}(U)+B_{c_{uv}}=\W_{uv},$$
  by the convention of $\mathcal{S}_u$. So, in the case $\{e\in\mathbb{S}^{N-1}:\, w_u(e)>w_v(e)\}\neq \emptyset$, the set $\mathcal{S}_u$ can be written as
  $$\mathcal{S}_u=\underset{\substack{e\in\mathbb{S}^{N-1}, \\ w_u(e)\ge w_v(e)}}{ \cup} \underset{0<\tau<1}{\cup} B_{\tau c_{uv}}((1-\tau)w_u(e)e).$$
  Under conditions of Theorem~\ref{Th2}, we can immediately  have  the precise speed of species $u$ and $v$ in every direction.

\begin{corollary}\label{Cor1}
Assume that all conditions of Theorem~\ref{Th2} hold. For every $e\in \mathbb{S}^{N-1}$ such that $w_u(e)\ge w_v(e)$, it holds that
\begin{eqnarray}\label{su-full}
\left\{\begin{array}{lll}
\displaystyle\lim_{t\to +\infty}\sup_{0\le s\le c}\left \{ |u(t, ste)-1| + |v(t, ste)|\right \} =0, &\text{for }		0\le c<w_u(e),&\\
\displaystyle\lim_{t\to +\infty}\sup_{s\ge c} |u(t, ste) |=0, &\text{for }c>w_u(e),&\\
\displaystyle\lim_{t\to +\infty}\sup_{r\ge 0} |v(t, re)|  =0.
\end{array}
\right.
\end{eqnarray}
For every $e\in \mathbb{S}^{N-1}$ such that $w_u(e)< w_v(e)$, it holds that
\begin{eqnarray}\label{sv-full}
\left\{\begin{array}{lll}
\displaystyle\lim_{t\to +\infty}\sup_{0\le s\le c}\left \{ |u(t, ste)-1| + |v(t, ste)|\right \} =0, &\text{for }		0\le c<s_u(e),&\\
\displaystyle\lim_{t\to +\infty}\sup_{c_1\le s\le c_2}\left \{ |u(t, ste)| + |v(t, ste)-1|\right \} =0, &\text{for } s_{u}(e)<c_1<c_2<w_v(e),&\\
\displaystyle\lim_{t\to +\infty}\sup_{ s\ge c}\left \{ |u(t, ste)| + |v(t, ste)|\right \} =0, &\text{for } c>w_v(e).&
\end{array}
\right.
\end{eqnarray}
\end{corollary}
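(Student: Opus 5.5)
The plan is to read off Corollary~\ref{Cor1} from the spreading sets of Theorem~\ref{Th2}, adding the directional behaviour of $v$ in directions where $u$ wins. We restrict the uniform convergences \eqref{subset}--\eqref{superset} for $\mathcal{S}_u$ and $\mathcal{S}_v$ to compact pieces of the ray $\R^+e$, and use the radial descriptions $\mathcal{S}_u=\{re:0\le r<s_u(e)\}$ and $\mathcal{S}_v=\{re: w_u(e)<w_v(e),\ s_u(e)<r<w_v(e)\}$. We also use the facts $s_u(e)=w_u(e)$ when $w_u(e)\ge w_v(e)$ and $w_{uv}(e)\le s_u(e)<w_u(e)$ otherwise, all stated in Theorem~\ref{Th2}.

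\emph{Directions with $w_u(e)<w_v(e)$.} For $0\le c<s_u(e)$, the segment $\{se:0\le s\le c\}$ is a compact subset of $\mathcal{S}_u$. Theorem~\ref{Th2} therefore gives $u(t,tse)\to1$ uniformly on it. Since $v\le 1$ and we are in the regime $u\to1$, the standard comparison $v_t\le \Delta v+v(1-v-bu)$ with $b>1$ forces $v\to0$ there. Concretely, the spreading-set statement of Theorem~\ref{Th2} already asserts $(u,v)\to(1,0)$ on $\mathcal{S}_u$; otherwise we argue via a small neighbourhood in which $1-bu<-\delta$, so that $v$ decays exponentially, and use parabolic estimates. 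For $s_u(e)<c_1<c_2<w_v(e)$, the segment lies in $\mathcal{S}_v$, which gives $(u,v)\to(0,1)$. For $c>w_v(e)$, we use $v\le \bar v$, where $\bar v$ solves the scalar KPP equation with datum $\1_V$, and $u\le\bar u$, its analogue for $\1_U$. The single-species results \eqref{omega-u}--\eqref{W-v} give $\bar v(t,ste)\to0$ uniformly for $s\ge c$. Since $w_v(e)>w_u(e)$, also $c>w_u(e)$, so $\bar u(t,ste)\to0$ uniformly as well. Uniformity for $s$ up to $+\infty$ follows from the Hamel--Rossi upper bounds, which are uniform on closed cones outside $\overline{\W}$.

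\emph{Directions with $w_u(e)\ge w_v(e)$.} The first line of \eqref{su-full} follows as before, since $s_u(e)=w_u(e)$. The second line follows from $u\le\bar u$ and $c>w_u(e)$. The delicate part is the third line, $\sup_{r\ge0}v(t,re)\to0$. Split the ray into three pieces. The piece $r\le ct$ with $c<w_u(e)$ is handled by the first line. The piece $r\ge c't$ with $c'>w_v(e)$ is handled by the KPP supersolution $\bar v$. This leaves the window $ct\le r\le c't$ around $w_u(e)t$. When $w_u(e)>w_v(e)$, choose $w_v(e)<c<w_u(e)$ so that the window is empty and we are done. When $w_u(e)=w_v(e)$, use assumption \eqref{C-A}: $e$ is a limit of directions $e_n$ with $w_u(e_n)>w_v(e_n)$. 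By Theorem~\ref{Th1}, or by the previous case applied to those $e_n$, $v$ vanishes along the rays $e_n$. For the ray $e$ itself, we would exploit that $\mathcal{S}_v\cap\R^+e=\emptyset$ while the points $w_u(e)e$ lie on $\partial\mathcal{S}_u\cap\partial\W_v$. Near $tw_u(e)e$, $v$ is bounded by $\bar v$, which is small slightly beyond $w_v(e)t$, and $u$ is close to $1$ slightly inside. The competitive term $-bv u$ with $b>1$ then kills $v$ across a window of width $o(t)$.

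We expect this borderline case $w_u(e)=w_v(e)$ to be the main obstacle, and would handle it as follows. First construct a supersolution for $v$ on moving balls $B_{\epsilon t}(sw_u(e)te)$ combining the KPP tail bound from $\bar v$ with the decay produced where $u\approx1$. Then let $\epsilon\to0$ using the continuity of $w_u$, $w_v$ and $s_u$. If that fails, the fallback is to invoke directly the part of the proof of Theorem~\ref{Th2} showing $v\to0$ uniformly on compacts of $\R^N\setminus\overline{\mathcal{S}_v}$. Its superset statement \eqref{superset} for $\mathcal{S}_v$, applied to the compact segment $\{se:c\le s\le c'\}\subset\R^N\setminus\overline{\mathcal{S}_v}$, gives the window estimate immediately. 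This segment avoids $\overline{\mathcal{S}_v}$ precisely because, by \eqref{C-A} and the continuity of $s_u$, nearby directions $e'$ with $w_u(e')<w_v(e')$ have $s_u(e')$ close to $w_u(e)=w_v(e)$, hence close to $w_v(e')$. This closure argument is the step to check carefully.
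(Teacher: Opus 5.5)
Except for one line, your route is the paper's. The paper gives no separate proof: it reads the corollary off Theorem~\ref{Th2} (compact pieces of $\R^+e$ inside $\mathcal{S}_u$ or $\mathcal{S}_v$) and uses Lemma~\ref{uv0} for the unbounded tails $\{s\ge c\}$. The strict case $w_u(e)>w_v(e)$ of the third line of \eqref{su-full} is exactly the argument in the proof of Theorem~\ref{Th1}. You also do not need an exponential-decay argument for $v$ on $\mathcal{S}_u$: since $\mathcal{S}_u$ is open and disjoint from $\mathcal{S}_v=\W_v\setminus\overline{\mathcal{S}_u}$, it misses $\overline{\mathcal{S}_v}$, so the superset property of $\mathcal{S}_v$ (or Lemma~\ref{Su-cone}) gives $v\to0$ there.

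\textbf{The gap} is the third line of \eqref{su-full} when $w_u(e)=w_v(e)$. Your fallback needs $\{se:\,c\le s\le c'\}\subset\R^N\setminus\overline{\mathcal{S}_v}$ for some $c<w_u(e)<c'$. The facts you cite refute this.
\begin{itemize}
\item Suppose $e$ is a limit of directions $e'$ with $w_u(e')<w_v(e')$. This is the typical situation on the interface, and \eqref{C-A} only supplies approximating directions where $u$ wins; it does not exclude these.
\item For such $e'$, the nonempty segments $\{re':\,s_u(e')<r<w_v(e')\}\subset\mathcal{S}_v$ collapse onto $w_u(e)e$, because $s_u(e')\to s_u(e)=w_u(e)$ and $w_v(e')\to w_v(e)=w_u(e)$.
\item Hence $w_u(e)e\in\overline{\mathcal{S}_v}$. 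The paper's half-cylinder example with $c_u=c_v$ shows this at every equatorial direction.
\end{itemize}
Your primary plan does not repair this. In the window $|r-w_u(e)t|\le\eta t$, spreading-set information gives neither $u\approx1$ nor $\bar v\approx0$. Letting the ball radius tend to $0$ after $t\to+\infty$ never yields a bound uniform in $r$. When $w_u(e)=w_v(e)=+\infty$ your three-piece split has no outer piece at all, and Theorem~\ref{Th2} only concerns compact sets of $x/t$, so it says nothing about $v(t,re)$ for $r\gg t$. The fallback is valid only if $w_u\ge w_v$ on a whole neighbourhood of $e$ in $\mathbb{S}^{N-1}$ and $w_v(e)<+\infty$; then the open cone over that neighbourhood misses $\overline{\mathcal{S}_v}$.

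You could not have filled this from the paper: it only declares the corollary immediate, and in the equality case the claim can fail. Let $e_N=(0,\dots,0,1)$, take $e\in\mathbb{S}^{N-1}$ with $e\cdot e_N=0$, and set
$$V=\{x_N>0\}\cup\bigcup_{n\ge1}B_n(n^3e-ne_N),\qquad U=\{x_N<0\}\setminus\bigcup_{n\ge1}\overline{B_n(n^3e-ne_N)}.$$
\begin{itemize}
\item $\mathcal{U}(U)$ and $\mathcal{U}(V)$ are the closed lower and upper hemispheres.
\item \eqref{eq:BUS}, \eqref{Starshape} and \eqref{C-A} hold, and $w_u(e)=w_v(e)=+\infty$.
\item Yet $\1_U(n^3e+\cdot)\to0$ and $\1_V(n^3e+\cdot)\to1$ in $L^1_{loc}(\R^N)$. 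So for every fixed $t>0$, $v(t,n^3e)\to1$ as $n\to\infty$, i.e.\ $\sup_{r\ge0}v(t,re)=1$.
\end{itemize}
For finite $w_u(e)=w_v(e)$, this line would require a genuine analysis at the junction point $w_u(e)te$, where both single-species fronts arrive simultaneously. Nothing in Theorems~\ref{Th1}--\ref{Th2} provides that.
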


\begin{remark}
In general, for directions $e\in\mathbb{S}^{N-1}$ such that $w_u(e)=w_v(e)$, it is difficult to determine the behavior of the solution as the critical case $c_u=c_v$ in one dimensional space. However, under conditions of Theorem~\ref{Th2}, even for directions $e\in\mathbb{S}^{N-1}$ such that $w_u(e)=w_v(e)$, one can still have that the spreading speed of $u$ is $w_u(e)$, see \eqref{su-full}. The condition \eqref{C-A} especially means that $e$ can be approximated by a sequence of directions $\{e_n\}_{n\in\mathbb{N}}$ such that $w_u(e_n)>w_v(e_n)$.
\end{remark}

\subsection{Comments on Theorems~\ref{Th1}-\ref{Th2}}\label{subs-1.4}

Formulae of the spreading speeds and sets \eqref{omega-u}, \eqref{omega-v}, \eqref{unuve}, \eqref{W-u}, \eqref{W-v} and \eqref{W-uv} are valid under the condition \eqref{eq:BUS}. We refer to \cite[Proposition~5.1]{HR} for some sufficient conditions such that  \eqref{eq:BUS} holds. However, there may exist a direction such that it is neither bounded or unbounded for a set in general. For instance, for any given $e\in\mathbb{S}^{N-1}$, if $U=B_1+\{2^n e:\, n\in\mathbb{N}\}$, then $e\not\in \mathcal{B}(U)\cup \mathcal{U}(U)$ and formulae \eqref{omega-u}, \eqref{W-u} fail. There are also examples violating \eqref{eq:BUS} but formulae \eqref{omega-u}, \eqref{W-u} are still valid, see \cite{HR}. 

For conditions \eqref{Pathwu>wv} and \eqref{Pathwu<wv}, we have roughly explained their necessity. Unfortunately, we can not rigorously prove a counterexample. We leave it as a future research subject.

By Definitions~\ref{def:Path}-\ref{def:starshape}, condition \eqref{Starshape} implies that \eqref{Pathwu>wv} holds for every $e\in\mathbb{S}^{N-1}$ such that $w_u(e)>w_v(e)$ and \eqref{Pathwu<wv} holds for every $e\in\mathbb{S}^{N-1}$ such that $w_u(e)<w_v(e)$. So, condition \eqref{Starshape} is stronger than conditions \eqref{Pathwu>wv}-\eqref{Pathwu<wv}.

The condition~\eqref{C-A} means that the $(N-1)$-dimensional Hausdorff measure of $\{e\in\mathbb{S}^{N-1}:\, w_u(e)= w_v(e)\}$ is zero. A counterexample is $\W_u=B_{c_u}=\W_v=B_{c_v}$ when $c_u=c_v$. If \eqref{C-A} is not satisfied,  it is difficult to determine the behavior of the solution for directions $e\in\mathbb{S}^{N-1}$ such that $w_u(e)=w_v(e)$.

When $U$ and $V$ are compact sets with $U_{\rho}\neq \emptyset$ and $V_{\alpha}\neq \emptyset$, then $\mathcal{B}(U)=\mathcal{B}(V)=\mathbb{S}^{N-1}$ and $\mathcal{U}(U_{\rho})=\mathcal{U}(V_{\alpha})=\emptyset$ which satisfies \eqref{eq:BUS}. Moreover, $\W_u=B_{c_u}$, $\W_v=B_{c_v}$ and $\W_{uv}=B_{c_{uv}}$. If $c_u>c_v$, \eqref{Starshape} and \eqref{C-A} are satisfied by noticing $\R^+\mathcal{U}(U)\cup \{0\}=\R^+\mathcal{U}(V)\cup \{0\}=\{0\}$. Then, by Theorem~\ref{Th2}, $s_u(e)=c_u$ for every $e\in\mathbb{S}^{N-1}$ and $\mathcal{S}_u=B_{c_u}$, $\mathcal{S}_v=\emptyset$. If $c_u<c_v$, \eqref{Starshape} and \eqref{C-A} are also satisfied. By Theorem~\ref{Th2}, $s_u(e)=c_{uv}$ for every $e\in\mathbb{S}^{N-1}$ and $\mathcal{S}_u=B_{c_{uv}}$, $\mathcal{S}_v=B_{c_v}\setminus \overline{B_{c_{uv}}}$. As we have mentioned, it is hard to determine the behavior of the solution for the case $c_u=c_v$. However, if $c_u=c_v$ but $U$ and $V$ are not compact, it is possible to have the spreading speeds and sets. For instance, if $c_u=c_v$,
$$U=\{x\in\R^N:\, x_N<0,\, |x'|<R\} \hbox{ and } V=\{x\in\R^N:\, x_N>0,\, |x'|<R\}$$
with $R>0$ sufficiently large, then $\W_u=\R^+ (0,0,\cdots,-1)+B_{c_u}$ and $\W_v=\R^+ (0,0,\cdots,1)+B_{c_v}$. Clearly, \eqref{Starshape} and \eqref{C-A} are satisfied. Then, 
\be 
\begin{aligned}
\mathcal{S}_u=&\Big\{x\in\R^N:\, 0\le x_N\le \sqrt{c_{uv}^2-|x'|^2} \hbox{ for } |x'|\le \frac{c_{uv}^2}{c_u},\, 0\le x_N\le \frac{c_{uv}}{\sqrt{c_u^2-c_{uv}^2}}(c_u-|x'|) \\
&\hbox{ for } \frac{c_{uv}^2}{c_u}<|x'|<c_u\Big\}\cup \{x\in\R^N:\, x_N<0,\, |x'|<c_u\},
\end{aligned}
\ee
and $\mathcal{S}_v:=\W_v\setminus \overline{\mathcal{S}_u}$. When $U$ is a general measurable set and $V=\R^N\setminus U$ satisfying \eqref{eq:BUS}, the $(N-1)$-dimensional Hausdorff measure of $\overline{\mathcal{B}(U)}\cap \mathcal{U}(U)$ is equal to zero since $\mathcal{B}(U)$ and $\mathcal{U}(U)$ are relatively open and closed in $\mathbb{S}^{N-1}$ respectively. Then, $\hbox{int}(\mathcal{U}(U))=\mathcal{B}(V)$ ($\hbox{int}(\mathcal{U}(U))$ denotes interiors of $\mathcal{U}(U)$ on $\mathbb{S}^{N-1}$) and $\overline{\mathcal{B}(U)}=\mathcal{U}(V)$. Moreover, $\W_u=\R^+\mathcal{U}(U)+B_{c_u}$, $\W_v=\R^+\mathcal{U}(V)+B_{c_v}=\R^+ \mathcal{B}(U)+B_{c_v}$ and $\W_{uv}=\R^+\mathcal{U}(U)+B_{c_{uv}}$. In other words,  $w_u(e)=+\infty>w_v(e)$ for $e\in \hbox{int}(\mathcal{U}(U))$, $w_u(e)<+\infty=w_v(e)$ for $e\in \mathcal{B}(U)$ and $w_u(e)=+\infty=w_v(e)$ for $e\in  \overline{\mathcal{B}(U)}\cap \mathcal{U}(U)$. Conditions \eqref{Starshape} and \eqref{C-A}  are then satisfied. From Theorem~\ref{Th2}, the spreading set $\mathcal{S}_u=\W_{uv}$ and $\mathcal{S}_v=\R^N\setminus \overline{\W_{uv}}$ which is consistent with the results of \cite{BG}.

\vskip 0.3cm
\noindent \textbf{Outline of the paper.} In Section~2, we show some preliminary tools. Section~3 is devoted to the proofs of Theorems~\ref{Th1} and \ref{Th2}.

\section{Preliminaries}
In this section, we deduce some preliminary tools which will be used frequently in the proofs of Theorems~\ref{Th1}-\ref{Th2}.

We first show an estimate of the solution when points stay an uniform distance away from a positive distance interior of the initial support.

\begin{lemma}\label{delta-uv}
Let $(u,v)(t,x)$ be the solution of \eqref{pro1}. For any $t\in\R$, $R>0$ and $\alpha>0$ such that $U_{\alpha},\, V_{\alpha}\neq \emptyset$, there is $\delta\in (0,1)$ such that
$$u(t,x)\ge \delta, \hbox{ for any $x\in \R^N$ such that $\hbox{dist}(x,U_{\alpha})< R$},$$
and
$$v(t,x)\ge \delta, \hbox{ for any $x\in \R^N$ such that $\hbox{dist}(x,V_{\alpha})< R$}.$$
\end{lemma}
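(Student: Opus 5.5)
The plan is to bound $u$ from below by a linear comparison in which the competition term is replaced by a constant loss rate, and then to bound that linear problem using only the mass of $u_0$ in a ball of radius $\alpha$. Throughout, $t>0$ is fixed (at $t=0$ the statement is only meant in the $L^1_{loc}$ sense). The argument for $v$ is identical with the roles exchanged, so I describe it only for $u$.

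\emph{Step 1: a priori bounds.} By the comparison principle for each scalar equation, $0\le u,v\le 1$ for all $t>0$. Indeed, $u$ is a subsolution of $w_t=d\Delta w+rw(1-w)$ with initial datum at most $1$, so $u\le 1$, and the same holds for $v$; positivity follows from the maximum principle. Consequently
$$ru(1-u-av)\ge -r(1+a)u,$$
so $u$ is a supersolution of the linear equation $w_t=d\Delta w-r(1+a)w$. Since $u_0=\1_U\ge \1_{U}$, comparison gives
$$u(t,x)\ge e^{-r(1+a)t}\,\big(G_{dt}*\1_U\big)(x),$$
where $G_{s}(z)=(4\pi s)^{-N/2}e^{-|z|^2/(4s)}$ is the heat kernel.

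\emph{Step 2: a uniform lower bound for the convolution.} Take $x$ with $\hbox{dist}(x,U_\alpha)<R$, and choose $y\in U_\alpha$ with $|x-y|<R$. By definition of $U_\alpha$, $\hbox{dist}(y,\partial U)\ge\alpha$ and $y\in U$. The ball $B_\alpha(y)$ is connected, contains $y\in U$ and does not meet $\partial U$, so it lies in $U$ up to a null set (one may work with the interior of $U$). For every $z\in B_\alpha(y)$ we have $|x-z|<R+\alpha$, hence
$$\big(G_{dt}*\1_U\big)(x)\ge |B_\alpha|\,(4\pi dt)^{-N/2}e^{-(R+\alpha)^2/(4dt)}.$$
This bound depends only on $t,d,N,R,\alpha$ and not on $x$. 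Call the resulting product with $e^{-r(1+a)t}$ $\delta_u$. Doing the same for $v$ with the equation $w_t=\Delta w-(1+b)w$ gives $\delta_v$. Set $\delta=\min\{\delta_u,\delta_v,1/2\}\in(0,1)$.

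\emph{The main obstacle} is the meaning of $U_\alpha$ for a merely measurable $U$. The step ``$B_\alpha(y)\subset U$ a.e.'' requires $U$ to coincide with its interior near $y$. I would interpret $\partial U$ topologically and use the fact that a point at distance at least $\alpha$ from $\partial U$ lying in $U$ is an interior point whose $\alpha$-ball avoids $\partial U$, so the ball lies in $\overline{U}\setminus\partial U=\hbox{int}\,U$. The remaining details, namely the justification of comparison for $L^\infty$ data in the $L^1_{loc}$ solution sense, are standard and follow by approximation.
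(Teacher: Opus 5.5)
Your proposal is correct and follows the paper's basic strategy: bound $u(t,\cdot)$ from below by a heat-kernel integral over a ball $B_\alpha(y)\subset U$, where $y\in U_\alpha$ and $|x-y|<R$.

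The one real difference is the comparison function, and here your version is the right one.

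\begin{itemize}
\item The paper compares $u$ with the solution of the pure heat equation $\widetilde u_t=d\Delta\widetilde u$, $\widetilde u(0)=\1_U$, claiming it is a subsolution ``since $0\le v\le 1$''. That would require $r\widetilde u(1-\widetilde u-av)\ge 0$. Under (A1) this fails wherever $v$ is close to $1$: for instance near $V$ at small times, where $1-\widetilde u-av\approx 1-a<0$ while $\widetilde u>0$. So the paper's comparison step is not justified as written.
\item You observe that $ru(1-u-av)\ge -r(1+a)u$, so $u$ is a supersolution of $w_t=d\Delta w-r(1+a)w$. This repairs the step at the price of the factor $e^{-r(1+a)t}$, which is harmless because it is a positive constant for fixed $t$.
\end{itemize}

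Two smaller points also go your way.

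\begin{itemize}
\item Your kernel constant $(4\pi dt)^{-N/2}e^{-(R+\alpha)^2/(4dt)}$ is correct. The paper writes the one-dimensional normalization and omits $d$. This only affects constants.
\item Your connectedness argument shows $B_\alpha(y)\subset \mathrm{int}\,U$, a step the paper takes for granted. The argument is that the ball avoids $\partial U$ and contains the point $y\in U\setminus\partial U=\mathrm{int}\,U$. It gives full inclusion, so your ``up to a null set'' hedge is unnecessary.
\end{itemize}
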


\begin{proof}
We only prove the lemma for $u$ and the same arguments can be applied for $v$. 

Since $0\le v(t,x)\le 1$, the solution $\widetilde{u}(t,x)$ of the heat equation
$$\widetilde{u}_t=d\Delta \widetilde{u}, \quad  t>0,\, x\in\R^N, \hbox{ with $\widetilde{u}(0,x)=\1_U$},$$
is a subsolution for $u$. Thus, we have 
$$u(t,x)\ge \widetilde{u}(t,x)=\int_{\R^N} \Gamma(t,x-y) \widetilde{u}(0,y)dy=\int_{U} \Gamma(t,x-y)dy,$$
where $\Gamma(s,\xi)$ is the heat kernel. For any $x\in \R^N$ such that $\hbox{dist}(x,U_{\alpha})< R$, there is $x_0\in U_{\alpha}$ such that $|x-x_0|\le R$ and $B_{\alpha}(x_0)\subset U$. It follows that for any $x\in \R^N$ such that $\hbox{dist}(x,U_{\alpha})< R$,
$$u(t,x)\ge \int_{B_\alpha(x_0)} \Gamma(t,x-y)dy\ge \frac{1}{\sqrt{4\pi t}} \int_{B_\alpha(x_0)} e^{-\frac{|x-y|^2}{4t}} dy\ge \frac{1}{\sqrt{4\pi t}} e^{-\frac{(R+\alpha)^2}{4t}}\int_{B_\alpha(x_0)}  dy.$$
The conclusion then follows.
\end{proof}

Then, we need two lemmas of \cite{BG} which are  about two types of initial value problems.
For $\rho>0$ and $\delta\in (0,1)$, let $(\underline{u}_{\rho},\overline{v}_{\rho})(t,x)$ be the solution of (\ref{pro1})  with the following initial condition
\begin{eqnarray}\label{eq-uv-rho}
	\left\{\begin{array}{lll}
		(\underline{u}_{\rho}, \overline{v}_{\rho})(0,x)
		=(1-\delta, \delta), && \hbox{for } x\in B_{\rho},\\
		(\underline{u}_{\rho}, \overline{v}_{\rho})(0,x)		=(0, 1), &&  \hbox{for } x\in \mathbb{R}^{N}\setminus B_{\rho}.
	\end{array}
	\right.
\end{eqnarray}
By \cite[Lemma~2.1]{BG}, one has that the region where $(\underline{u}_{\rho}, \overline{v}_{\rho})(t,x)$ is close to $(1,0)$ is expanding at an approximate speed $c_{uv}$.

\begin{lemma}\label{lemma:sub}
	There are $\rho>0$ and $\delta_0\in (0,1)$ such that for any $\varepsilon\in (0,c_{uv})$ and $\delta\in (0,\delta_0)$, there holds that
	\begin{equation}\label{extending}
		(\underline{u}_{\rho},\overline{v}_{\rho})(t,x)\rightarrow (1,0)~\text{
			uniformly in}~\left\lbrace x\in \mathbb{R}^{N}; |x|\le 
		(c_{uv}-\varepsilon)t\right\rbrace ~\text{as}~t\to +\infty.
	\end{equation}
	Moreover,
	\begin{equation}\label{lemma2.1-delta}
		\begin{aligned}
			\underline{u}_{\rho}(t,x)\ge 1-2\delta,~\overline{v}_{\rho}(t,x)(t, x)\le 2\delta, \quad &\text{for all} ~t\ge 0\hbox{ and }  |x|\le (c_{uv}-\varepsilon)t.
		\end{aligned}
	\end{equation}
\end{lemma}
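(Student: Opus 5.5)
The lemma is quoted from \cite[Lemma~2.1]{BG}; I would re-derive it by a comparison argument built on the bistable front $(\Phi,\Psi)$ of \eqref{pro2}. The system \eqref{pro1} is monotone for the competitive order ($u$ increasing, $v$ decreasing). It therefore suffices to construct a radially symmetric subsolution pair $(\underline{U},\overline{V})$ that lies below $(\underline{u}_\rho,\overline{v}_\rho)$ at $t=0$ in this order, and whose $(1,0)$-region expands at speed $c_{uv}-\varepsilon$.

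\emph{Construction of the comparison pair.} Following the Fife--McLeod approach, I would take
$$\underline{U}(t,x)=\Phi(|x|-\zeta(t))-q(t),\qquad \overline{V}(t,x)=\Psi(|x|-\zeta(t))+q(t).$$
Here $q(t)=q_0e^{-\mu t}$, and $\zeta(t)=\zeta_0+(c_{uv}-\varepsilon/2)t-K(1-e^{-\mu t})$, with $\zeta_0<0$ chosen so that initially the front sits inside $B_\rho$. The speed deficit of $\varepsilon/2$ compared with $c_{uv}$ absorbs the curvature terms $\frac{N-1}{|x|}\Phi'$ and $\frac{N-1}{|x|}\Psi'$ once $|x|\ge R_0$ with $R_0$ large. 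This is where the largeness of $\rho$ is used, so that the interface starts at a radius beyond $R_0$. The stability of $(1,0)$ and $(0,1)$ under strong competition (A1) controls the errors in the regions where $\Phi\approx 1$ or $\Phi\approx 0$: there the linearized reaction gives a negative definite contribution, which dominates the $q'$ terms. In the transition region, the term $\zeta'$ times $\Phi'<0$ and $\Psi'>0$, together with $K$ large, absorbs the errors. Near the origin, where $|x|$ is small, the curvature term has the wrong sign. I would handle this by a standard modification: use $\max$ or $\min$ with a constant state, or replace $|x|$ by a smooth radial function, since near the origin $(\underline{U},\overline{V})\approx(1-q,q)$, which is itself a subsolution pair.

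\emph{Initial ordering and smallness.} At $t=0$ one needs $\underline{U}(0,\cdot)\le 1-\delta$ in $B_\rho$ and $\underline{U}(0,\cdot)\le 0$ outside $B_\rho$, together with the analogous inequalities for $\overline{V}$. I would choose $q_0\ge\delta$ and then push $\zeta_0$ sufficiently negative relative to $\rho$. Using the exponential tails of $\Phi$ and $1-\Psi$, one gets $\underline{U}\le 0$ and $\overline{V}\ge 1$ outside $B_\rho$; if necessary, these would be truncated by taking the positive part (resp. $\min$ with $1$), which preserves the sub/super property. The condition $\delta<\delta_0$ is needed so that $q_0$ is small enough for the perturbation argument above.

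\emph{Conclusion.} Comparison gives $\underline{u}_\rho\ge \Phi(|x|-\zeta(t))-q(t)$ and $\overline{v}_\rho\le\Psi(|x|-\zeta(t))+q(t)$. For $|x|\le (c_{uv}-\varepsilon)t$, the argument $|x|-\zeta(t)\to-\infty$ uniformly, which yields \eqref{extending}. The estimate \eqref{lemma2.1-delta}, valid for all $t\ge0$, requires a little more care for small $t$. Choosing $q_0=\delta$ and shifting so that $\Phi(|x|-\zeta(t))\ge1-\delta$ on the cone $\{|x|\le (c_{uv}-\varepsilon)t\}$ for every $t\ge0$ gives the bound, because $\zeta(t)-(c_{uv}-\varepsilon)t\ge \zeta_0-K+\varepsilon t/2$ and $\rho$ can be taken large to make this sufficiently positive.

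I expect the main obstacle to be verifying the differential inequality uniformly in the three regions (the two plateaus and the interface), while simultaneously managing the singular curvature near the origin and keeping the constants independent of $\varepsilon$ and $\delta$.
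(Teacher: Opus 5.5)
The paper gives no argument for this lemma; it is quoted from \cite[Lemma~2.1]{BG}. Your Fife--McLeod construction therefore has to stand on its own, and as written its central step fails.

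\textbf{The equal-weight ansatz is not a sub/supersolution pair on the plateaus.} Take $\underline U=\Phi(\xi)-q$, $\overline V=\Psi(\xi)+q$, with $\xi=|x|-\zeta(t)$ and $q=q_0e^{-\mu t}$. The profile equations give
\[
\underline U_t-d\Delta\underline U-r\underline U(1-\underline U-a\overline V)=\Big(c_{uv}-\zeta'-\frac{d(N-1)}{|x|}\Big)\Phi'+\mu q+rq\big[(a-1)\Phi+1-\Phi-a\Psi-(a-1)q\big].
\]
On the $(1,0)$-plateau, where $|\Phi'|\ll q$, the left-hand side is $\approx\mu q+r(a-1)q(1-q)>0$. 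So $\underline U$ is not a subsolution there, precisely because $a>1$. Symmetrically, ahead of the front the $v$-residual, which must be nonnegative, tends to $-\mu q-(b-1)q(1+q)<0$.

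Stability of the equilibria does not rescue this. Near $(1,0)$, in the cooperative variables $(1-u,v)$, a perturbation $q(p_1,p_2)$ is admissible only if $ap_2\le(1-\mu/r)p_1$. Near $(0,1)$, in the variables $(-u,v-1)$, one needs $bp_1\le(1-\mu)p_2$. Since $ab>1$, no constant positive vector satisfies both, in particular not $(1,1)$.

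You need $\xi$-dependent weights: $\underline U=\Phi-q\,p_1(\xi)$, $\overline V=\Psi+q\,p_2(\xi)$, switching between the two admissible directions on a bounded $\xi$-interval. On that interval $|\Phi'|$ and $\Psi'$ are bounded below, so the extra $O(q)$ terms from $p_i'$, $p_i''$ can be absorbed by $K\mu e^{-\mu t}|\Phi'|$ (resp.\ $K\mu e^{-\mu t}\Psi'$).

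The same objection applies to your treatment of the origin. The pair $(1-q,q)$ is not a subsolution pair: its $u$-residual is again $\mu q+r(a-1)q(1-q)$. By contrast, $(1-qp_1,qp_2)$ with $(1,0)$-admissible weights is one, after smoothing $|x|$ near $0$ and taking $\zeta_0$ large so that $|\Phi'|$ is negligible there. Also, $\zeta_0<0$ must be a slip. The interface has to start at $\zeta_0=\rho-L$, large and positive; otherwise there is no $(1,0)$-region and your final bound $\zeta_0-K+\varepsilon t/2$ is not positive.

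\textbf{The quantifiers.} Your construction forces $\rho$ to depend on $\varepsilon$: the interface must start beyond a radius of order $(N-1)\max(d,1)/\varepsilon$ so that $\varepsilon/2$ absorbs the curvature from $t=0$. It also forces $\rho$ to depend on $\delta$, through margins of order $\log(1/\delta)$ for $q_0=\delta$ and $\Phi\ge1-\delta$.

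For \eqref{lemma2.1-delta} the dependence on $\delta$ is unavoidable, so the ``main obstacle'' you name is in fact an impossibility. Fix $\rho$ and let $(u^0,v^0)$ solve \eqref{pro1} with data $(1,0)$ in $B_\rho$ and $(0,1)$ outside. Comparison gives $\underline u_\rho\le u^0$ for every $\delta$. The function $w=1-u^0\ge0$ satisfies $w_t-d\Delta w\ge-rw$ with $w(0,\cdot)=\1_{\R^N\setminus B_\rho}$, so $w(1,0)>0$. Hence \eqref{lemma2.1-delta} fails at $(t,x)=(1,0)$ as soon as $2\delta<w(1,0)$. The correct statement has $\rho=\rho(\varepsilon,\delta)$ in \eqref{lemma2.1-delta}. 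That is all Lemmas~\ref{lemma:wu>wv} and \ref{Su-cone} use, since there $\rho$ is fixed before $t$ is taken large.

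For \eqref{extending}, on the other hand, $\rho$ must not depend on $\varepsilon$: Lemma~\ref{sycuv} applies it with the $\rho$ of Proposition~\ref{proposition1} for every $c<c_{uv}$. This needs a step you did not supply.
\begin{itemize}
\item By monotonicity in $\delta$, it suffices to treat $\delta=\delta_0$.
\item Run the construction once with a fixed $\varepsilon_0$. This shows that $(u,v)$ is eventually within any $\eta$ of $(1,0)$ on any given ball.
\item From such a time, restart the $\varepsilon$-construction with $\delta=\eta$ and radius $\rho(\varepsilon,\eta)$.
\end{itemize}
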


\begin{remark}
Proposition~\ref{proposition1} is an immediate consequence of Lemma~\ref{lemma:sub} and the comparison principle.
\end{remark}

For $R>0$ and $\delta\in (0,1)$, let $(\overline{u}_{R},\underline{v}_{R})(t,x)$ be the solution of (\ref{pro1})  with the following initial condition
\begin{eqnarray*}
	\left\{\begin{array}{lll}
		(\overline{u}_{R}, \underline{v}_{R})(0,x)
		=(\delta,1-\delta), && \hbox{for } x\in B_{R},\\
		(\overline{u}_{R}, \underline{v}_{R})(0,x)	=(1,0), &&  \hbox{for } x\in \mathbb{R}^{N}\setminus B_{R},
	\end{array}
	\right.
\end{eqnarray*}
By \cite[Lemma~2.2]{BG}, one has that the region where $(\overline{u}_{R}, \underline{v}_{R})(t,x)$ is close to $(0,1)$ is contracting at an approximate speed $c_{uv}$. 

\begin{lemma}\label{lem1}
	For any $\varepsilon>0$, there exist  $R_{\varepsilon}>0$ and $\delta_0\in (0,1)$ such that for any $R\ge R_{\varepsilon}$ and $\delta\in (0,\delta_0)$, 
	there holds 
	\begin{equation}\label{delta1}
		\begin{aligned}
			\overline{u}_{R}(t,x)\le 2\delta,~\underline{v}_{R}(t,x)(t, x)\ge 1-2\delta, \quad &\text{for} ~0\le t\le 
			\frac{R-R_{\varepsilon}}{c_{uv}+\varepsilon}\\
			&\text{and} ~|x|\le R-R_{\varepsilon}-(c_{uv}+\varepsilon)t.
		\end{aligned}
	\end{equation}
	Moreover, there exists $T_{\varepsilon}>0$ such that 
	\begin{equation}
		\begin{aligned}
			\overline{u}_{R}(t,x)\le \delta,~\underline{v}_{R}(t, x)\ge 1-\delta, \quad &\text{for} ~T_{\varepsilon}\le t\le 
			\frac{R-R_{\varepsilon}}{c_{uv}+\varepsilon}\\
			&\text{and} ~|x|\le R-R_{\varepsilon}-(c_{uv}+\varepsilon)t.
		\end{aligned}
	\end{equation}
\end{lemma}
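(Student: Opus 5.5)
This lemma is quoted from \cite[Lemma~2.2]{BG}, so one option is simply to invoke it there. To give a self-contained argument, I would build an explicit radially symmetric supersolution pair $(\overline{U},\underline{V})$ for the competitive system. Then I would compare it with $(\overline{u}_R,\underline{v}_R)$ using the comparison principle for the order $u\le u'$, $v\ge v'$, under which \eqref{pro1} is monotone.

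The supersolution would be modeled on the bistable front $(\Phi,\Psi)$ of \eqref{pro2}, reversed so that $(0,1)$ sits inside a large ball and $(1,0)$ sits outside. Concretely, I would take
$$\overline{U}(t,x)=\min\big\{1,\Phi(-|x|+\zeta(t))+q(t)\big\},\qquad \underline{V}(t,x)=\max\big\{0,\Psi(-|x|+\zeta(t))-q(t)\big\}.$$
Here $q(t)=q_0e^{-\mu t}+\delta$, or simply $q\equiv 2\delta$ for the first assertion. The interface $\zeta(t)=R-R_\varepsilon-(c_{uv}+\varepsilon)t$ moves inward at speed $c_{uv}+\varepsilon$.

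The verification would proceed in three steps.
\begin{itemize}
\item[(i)] Check the differential inequalities. The curvature term $(N-1)\Phi'/|x|$ has the right sign or is absorbed by the extra speed $\varepsilon$. This holds as long as $|x|$ stays larger than some $R_\varepsilon$ near the interface, which is where the restriction $t\le (R-R_\varepsilon)/(c_{uv}+\varepsilon)$ enters. The perturbation $q$ is controlled by stability of $(1,0)$ and $(0,1)$ when $a,b>1$: near each equilibrium the linearization has negative eigenvalues, so for small $\delta_0$ a constant shift of size $\sim\delta$ is absorbed. In the middle zone, where $\Phi',\Psi'$ are bounded away from zero, the error terms $O(\delta)$ are absorbed by $\varepsilon|\Phi'|$.
\item[(ii)] Handle the origin, where $|x|$ is not smooth. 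There $\Phi(-|x|+\zeta)\approx 0$ while $\zeta$ is still large, so the supersolution is essentially the constant $(q,1-q)$. Flatness of $\Phi$ near $-\infty$ lets me smooth $|x|$ near $0$.
\item[(iii)] Check the initial ordering: $\overline U(0)\ge \delta$ inside $B_R$ and $\overline U(0)\ge 1$ outside, after using $\min\{1,\cdot\}$ and choosing $R_\varepsilon$ so that $\Phi(-R_\varepsilon)\approx 1$ at $|x|=R$.
\end{itemize}
Once these hold, on $|x|\le \zeta(t)-C$ we get $\Phi\le\delta$ small and hence $\overline u_R\le 2\delta$; symmetrically $\underline v_R\ge 1-2\delta$. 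Enlarging $R_\varepsilon$ absorbs the constant $C$.

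For the second statement I would take $q(t)=\delta e^{-\mu t}+\delta/2$ with $\mu>0$ small. After a time $T_\varepsilon$ the bound improves from $2\delta$ to $\delta$.

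The main obstacle is step (i) in the intermediate zone. The estimate must be uniform in $R$, and the sign bookkeeping for the quasi-monotone system is delicate: $u$-terms and $v$-terms enter with opposite orders, and the nonlinear coupling $a\underline V\,\overline U$ must be compared with $a\Psi\Phi$. I would handle it by the standard Fife–McLeod choice of a large constant $K$ multiplying $q$ in the shift of $\zeta$.
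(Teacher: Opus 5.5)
The paper gives no proof of this lemma. It only quotes \cite[Lemma~2.2]{BG}, exactly as your first sentence does.

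Your self-contained sketch, however, has a genuine gap in step (i): with one scalar shift, $(\Phi+q,\Psi-q)$ is not a super/sub-solution pair. Deep inside the ball, $(\Phi,\Psi)\approx(0,1)$ and $\Phi',\Psi'$ are exponentially small. There the $v$-inequality has residual $\underline V_t-\Delta\underline V-\underline V(1-\underline V-b\overline U)\approx (b-1)q(1-q)>0$, which has the wrong sign. Neither the extra speed $\varepsilon$ nor a Fife--McLeod shift $Kq$ can compensate, since both only contribute multiples of $\Psi'$.

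Stability of the equilibria is the scalar heuristic. For the system you need, in the cooperative variables $(u,1-v)$, a positive vector $p$ with $Jp<0$ componentwise. At $(0,1)$ this forces $p_2>b\,p_1$, and at $(1,0)$ it forces $p_1>a\,p_2$. (With equal shifts, the $u$-inequality near $(1,0)$ has residual $\approx -r(a-1)q$ wherever neither cap is active, which $\varepsilon|\Phi'|$ does not control in general.) Since $ab>1$, no single vector works at both ends. The perturbation therefore has to be $q(t)\,(p_1(z),p_2(z))$, interpolating between two such vectors, with the $O(q)$ errors of the transition zone absorbed by $\varepsilon|\Phi'|$ and $\varepsilon|\Psi'|$. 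This is where $\delta_0$ must be small compared with $\varepsilon$.

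Two smaller corrections. The curvature term has the wrong sign in both inequalities, so it must always be absorbed by $\varepsilon$; it never "has the right sign". Near the origin the relevant tail is $z=\zeta-|x|\to+\infty$, not $-\infty$.

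Once repaired, the construction proves somewhat less than the quoted statement, and this loss is unavoidable.

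\emph{The constant in the $v$-bound.} The ordering at $t=0$ needs $q(0)p_1\ge\delta$ deep inside, hence $q(0)p_2>b\delta$ there. So you get $\underline v_R\ge 1-C\delta$ with $C>b$, rather than $1-2\delta$. This is not an artifact of the method. Let $v_{\mathrm{ODE}}$ be the spatially homogeneous solution starting from $(\delta,1-\delta)$; by comparison, $\underline v_R\le v_{\mathrm{ODE}}$. Linearization gives $1-v_{\mathrm{ODE}}(t)\approx\delta\,(e^{-t}+b\frac{e^{-kt}-e^{-t}}{1-k})$ with $k=r(a-1)$. Its maximum is about $2.4\,\delta$ for $r=1$, $a=1.1$, $b=3$, taking $d=1$ so that $c_{uv}>0$.

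\emph{The dependence of $R_\varepsilon$ on $\delta$.} Your remark that enlarging $R_\varepsilon$ absorbs $C$ makes $R_\varepsilon$ depend on $\delta$. You need $\Phi(C)\le\delta$, and in step (iii) also $1-\Phi(-R_\varepsilon)\le q$ and $\Psi(-R_\varepsilon)\le q$. This contradicts the quantifier order of the statement. But no $\delta$-independent $R_\varepsilon$ can exist: comparison with the case $\delta=0$ and the strong maximum principle bound $\overline u_R(1,x)$, at a fixed distance from $\partial B_R$, from below by a positive constant independent of $\delta$.

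So what your approach can deliver is the version with $R_{\varepsilon,\delta}$ and $C\delta$ in place of $2\delta$. The return to level $\delta$ after $T_\varepsilon$ is obtained by letting $q(t)$ decay to a small enough multiple of $\delta$. That weaker form is all Lemma~\ref{Sv-cone} uses, since there $\delta$ is fixed before $T$ is chosen.
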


If there is no competition, say, $v\equiv 0$ for instance, then species $u$ spreads approximately as $tB_{c_u}$. Now, we show that for any $C\subset B_{c_u}\setminus\{0\}$,  if the density of $v$ stays low in $Ct$, then species $u$ spreads at $Ct$.  
Let $\Phi_R: \mathbb{R}^{N}\rightarrow\mathbb{R}$ be the principal eigenfunction of the Laplace
 operator on the ball $B_{R}$ with Dirichlet boundary conditions, normalized by $||\Phi_{R}||_\infty=1$, satisfying 
 \begin{equation}
 	\begin{cases}
 		\Delta\Phi_R=\lambda_R\Phi_R &\hbox{in} ~B_R,\\
 		\Phi_R>0 &\hbox{in} ~B_R,\\
 	    \Phi_R=0 &\hbox{on} ~\partial B_R.
 	\end{cases}
 \end{equation} 
It is well known that $\lambda_R<0$ and it tends to 0 as $R\to +\infty$. We extend $\Phi_R$ on the whole space by setting $\Phi_R(x) = 0$ for  $|x| > R$.
For $\sigma\in (0,1)$, $e\in\mathbb{S}^{N-1}$, $0<c<c_u$ and $(t,x)\in [0,+\infty)\times\R^N$, define
\be\label{uRe}
\underline{u}_{c,e}(t, x)=\sigma e^{-\frac{c}{2d}(x\cdot e-ct) }\Phi_{R}\left( x -c t e \right).
\ee

\begin{lemma}\label{Rsigma}
For any compact set $C\subset B_{c_u}\setminus\{0\}$, there is $R>0$ and $\sigma_0>0$ such that for any $c>0$ and $e\in\mathbb{S}^{N-1}$ such that $ce\in C$ and any $\sigma\in (0,\sigma_0]$, if 
\be\label{con-v}
v(t,x)\le \frac{1}{4a}\Big(1-\frac{\max_{\xi\in C}\xi^2}{c_u^2}\Big), \hbox{ for $t \ge 0$ and $x\in C t + B_R$},
\ee
 then the function $\underline{u}_{c,e}(t,x)$ satisfies
\be\label{ineq-subs}
(\underline{u}_{c,e})_t-d\Delta\underline{u}_{c,e}-r\underline{u}_{c,e}(1-\underline{u}_{c,e}-av)\le 0.
\ee
for $(t,x)\in [0,+\infty)\times\R^N$.
\end{lemma}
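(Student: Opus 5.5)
Write $z:=x-cte$ and $\underline{u}=\underline{u}_{c,e}=\sigma e^{-\frac{c}{2d}z\cdot e}\Phi_R(z)$. The plan is a direct computation: verify \eqref{ineq-subs} pointwise inside the moving support $|z|<R$ using the eigenvalue equation, then handle the boundary of the support.

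\textbf{Inside $|z|<R$.} Differentiating gives
\[
\underline{u}_t=\sigma e^{-\frac{c}{2d}z\cdot e}\Big(\tfrac{c^2}{2d}\Phi_R - c\,e\cdot\nabla\Phi_R\Big).
\]
For the Laplacian,
\[
\Delta\underline{u}=\sigma e^{-\frac{c}{2d}z\cdot e}\Big(\Delta\Phi_R-\tfrac{c}{d}e\cdot\nabla\Phi_R+\tfrac{c^2}{4d^2}\Phi_R\Big).
\]
The gradient terms cancel, since $-c\,e\cdot\nabla\Phi_R - d\cdot(-\tfrac{c}{d})e\cdot\nabla\Phi_R=0$. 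Using $\Delta\Phi_R=\lambda_R\Phi_R$, this leaves
\[
\underline{u}_t-d\Delta\underline{u}=\underline{u}\Big(\tfrac{c^2}{4d}-d\lambda_R\Big).
\]
Hence \eqref{ineq-subs} reduces to
\[
\tfrac{c^2}{4d}-d\lambda_R\le r\big(1-\underline{u}-av\big).
\]
Since $c_u^2=4dr$, we have $\frac{c^2}{4d}=r\frac{c^2}{c_u^2}\le r\frac{m}{c_u^2}$, where $m:=\max_{\xi\in C}|\xi|^2<c_u^2$ and $ce\in C$. So it suffices to arrange
\[
r\Big(1-\tfrac{m}{c_u^2}\Big)\ge -d\lambda_R+r\,\underline{u}+ra v .
\]
Put $\kappa:=1-m/c_u^2>0$. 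Hypothesis \eqref{con-v} gives $rav\le r\kappa/4$ at every point of the support. This uses that the support $\{x: |x-cte|<R\}$ lies in $Ct+B_R$, because $cte\in Ct$.

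\textbf{Choice of $R$ and $\sigma_0$.} Choose $R$ large so that $-d\lambda_R\le r\kappa/4$, using $\lambda_R\to0^-$. Then choose $\sigma_0$ so that $\underline{u}\le r\kappa/4$-worth, i.e. $r\,\underline{u}\le r\kappa/4$. This is possible because on $|z|\le R$ we have $e^{-\frac{c}{2d}z\cdot e}\le e^{\frac{c_u R}{2d}}$ uniformly over $c<c_u$, so $\sigma_0=\frac{\kappa}{4}e^{-c_uR/(2d)}$ works. Both $R$ and $\sigma_0$ depend only on $C$, not on $c$, $e$ or $\sigma$. With these choices the sum of the three terms is at most $\tfrac34 r\kappa<r\kappa$, so \eqref{ineq-subs} holds with strict inequality inside the support.

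\textbf{Outside and on the boundary of the support.} Outside $|z|\le R$ we have $\underline{u}\equiv0$, so the inequality is trivial. On $|z|=R$ the function is only Lipschitz, so the main subtlety is interpreting \eqref{ineq-subs} there. I would argue in the generalized sense that is standard for such subsolutions: $\underline{u}$ is the maximum of $0$ and a classical subsolution on the ball, because $\Phi_R$ vanishes on $\partial B_R$ with an outward kink ($\partial_\nu\Phi_R<0$). The maximum of two subsolutions is a subsolution in the weak/viscosity sense, and this is exactly what the later comparison arguments need. Thus \eqref{ineq-subs} holds for all $(t,x)\in[0,+\infty)\times\R^N$.
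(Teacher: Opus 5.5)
Your proof is correct and follows essentially the same route as the paper. Both compute the operator as $\underline{u}_{c,e}\big(\frac{c^2}{4d}-d\lambda_R-r+r\underline{u}_{c,e}+arv\big)$ on $B_R(cte)\subset Ct+B_R$, then take $R$ large and $\sigma_0$ small so that $-d\lambda_R$, $r\underline{u}_{c,e}$ and (via the hypothesis on $v$) $arv$ each use at most a quarter of the uniform margin $r-\frac{1}{4d}\max_{\xi\in C}|\xi|^2$. Your explicit $\sigma_0=\frac{\kappa}{4}e^{-c_uR/(2d)}$ and your remark on reading the inequality across $\partial B_R(cte)$ in the generalized (viscosity or distributional) sense are details the paper leaves implicit.
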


\begin{proof}
Simply calculations yield that 
\begin{eqnarray*}
\begin{array}{lll}
 N[\underline{u}_{c,e},v]&:=&(\underline{u}_{c,e})_t-d\Delta\underline{u}_{c,e}-r\underline{u}_{c,e}(1-\underline{u}_{c,e}-av)\\
&=&  \frac{c^2}{2d} \underline{u}_{c,e} -  c\sigma e^{-\frac{c}{2d}(x\cdot e-ct)} \nabla\Phi_R\cdot e -d \Big(\frac{c^2}{4d^2} \underline{u}_{c,e} + \sigma e^{-\frac{c}{2d}(x\cdot e-ct)} \Delta\Phi_R\\
&& -c\sigma e^{-\frac{c}{2d}(x\cdot e-ct)} \nabla\Phi_R\cdot e \Big) -r \underline{u}_{c,e} (1-\underline{u}_{c,e} -a v)\\
&=&\Big(\frac{c^2}{4d} -d\lambda_R -r +r\underline{u}_{c,e} +a r v\Big)\underline{u}_{c,e} .
\end{array}
\end{eqnarray*}
For $t \ge 0$ and $x\in \R^N\setminus B_R(ct e)$, one has that $\underline{u}_{c,e}(t,x)\equiv 0$ and hence, $N[\underline{u}_{c,e},v]=0$. On the other hand, since $C$ is a compact subset of $B_{c_u}\setminus\{0\}$,  then 
$$\frac{c^2}{4d}\le \frac{\max_{\xi\in C}\xi^2}{4d}<\frac{c_u^2}{4d}=r.$$
Since $\lambda_R\rightarrow 0$ as $R\rightarrow +\infty$, $\sup_{t>0,x\in\R^N}\underline{u}_{c,e}(t,x)\rightarrow 0$ as $\sigma \rightarrow 0$ for fixed $R$ and any $e\in\mathbb{S}^{N-1}$, one can take $R>0$ sufficiently large and $\sigma_0>0$ sufficiently small such that 
$$\max\Big(-d\lambda_R, r \sup_{t>0,x\in\R^N}\underline{u}_{c,e}\Big)\le \frac{1}{4}\Big(r-\frac{\max_{\xi\in C}\xi^2}{4d}\Big),$$
for all $0<\sigma\le \sigma_0$.
Notice that $B_R(ct e) \subset Ct +B_R$.
For $t \ge 0$ and $x\in B_R(ct e)$, it follows from \eqref{con-v} and above inequality that $N[\underline{u}_{c,e},v]\le 0$.
\end{proof}

The same conclusion holds for $v$. For $\sigma\in (0,1)$, $e\in\mathbb{S}^{N-1}$, $0<c<c_v$ and $(t,x)\in [0,+\infty)\times\R^N$, define
\be
\underline{v}_{c,e}(t, x)=\sigma e^{-\frac{c}{2}(x\cdot e-ct) }\Phi_{R}\left( x -c t e \right).
\ee

\begin{lemma}\label{Lemma:Rsigma-v}
For any compact set $C\subset B_{c_v}\setminus\{0\}$, there is $R>0$ and $\sigma_0>0$ such that for any $c>0$ and $e\in\mathbb{S}^{N-1}$ such that $ce\in C$ and any $\sigma\in (0,\sigma_0]$, if 
\be
u(t,x)\le \frac{1}{4b}\Big(1-\frac{\max_{\xi\in C}\xi^2}{c_v^2}\Big), \hbox{ for $t \ge 0$ and $x\in C t + B_R$},
\ee
 then the function $\underline{v}_{c,e}(t,x)$ satisfies
\be
(\underline{v}_{c,e})_t-\Delta\underline{v}_{c,e}-\underline{v}_{c,e}(1-\underline{v}_{c,e}-b u)\le 0.
\ee
for $(t,x)\in [0,+\infty)\times\R^N$.
\end{lemma}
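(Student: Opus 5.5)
The plan is to copy the computation in the proof of Lemma~\ref{Rsigma}, exchanging the roles of the two species. The parameters change from $(d,r,a)$ to $(1,1,b)$, and the threshold $c_u$ becomes $c_v=2$. Set
\[
N[\underline{v}_{c,e},u]:=(\underline{v}_{c,e})_t-\Delta\underline{v}_{c,e}-\underline{v}_{c,e}(1-\underline{v}_{c,e}-bu).
\]
Write $\underline{v}_{c,e}=\sigma e^{-\frac c2(x\cdot e-ct)}\Phi_R(x-cte)$ and differentiate. The time derivative gives $\frac{c^2}{2}\underline{v}_{c,e}-c\sigma e^{-\frac c2(x\cdot e-ct)}\nabla\Phi_R\cdot e$. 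The Laplacian gives $\frac{c^2}{4}\underline{v}_{c,e}+\sigma e^{-\frac c2(x\cdot e-ct)}\Delta\Phi_R-c\sigma e^{-\frac c2(x\cdot e-ct)}\nabla\Phi_R\cdot e$. The gradient cross terms cancel exactly, which is why the exponential rate is $c/2$. Using $\Delta\Phi_R=\lambda_R\Phi_R$ in $B_R$, we get inside $B_R(cte)$
\[
N[\underline{v}_{c,e},u]=\Big(\frac{c^2}{4}-\lambda_R-1+\underline{v}_{c,e}+bu\Big)\underline{v}_{c,e}.
\]
Outside $B_R(cte)$ the function vanishes identically, so $N=0$ there.

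Next, choose the constants. Let $m:=\max_{\xi\in C}|\xi|^2<c_v^2=4$, so that $\frac{c^2}{4}\le\frac m4<1$ whenever $ce\in C$. Pick $R$ large so that $-\lambda_R\le\frac14(1-\frac m4)$, using $\lambda_R\to0^-$. For fixed $R$, bound $\sup\underline{v}_{c,e}$ uniformly over $t\ge0$ and $ce\in C$. Pick $\sigma_0$ small so that this supremum is at most $\frac14(1-\frac m4)$. The hypothesis on $u$ gives $bu\le\frac14(1-\frac m4)$ on $Ct+B_R\supset B_R(cte)$. Summing the four pieces, the bracket is at most $-\frac14(1-\frac m4)<0$, so $N\le0$.

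The only point needing care is the uniform bound on $\sup\underline{v}_{c,e}$. On the support we have $|x-cte|<R$. Since $c|e|^2=c$, this gives $|x\cdot e-ct|=|(x-cte)\cdot e|<R$, so $e^{-\frac c2(x\cdot e-ct)}\le e^{cR/2}\le e^{R\max_C|\xi|/2}$. This bound does not depend on $t$, $e$ or the particular $c$. Hence $\sup\underline{v}_{c,e}\le\sigma e^{R\max_C|\xi|/2}$, and $\sigma_0$ can be chosen independently of $c$, $e$ and $t$. With this bound in place, the argument is otherwise a direct transcription of Lemma~\ref{Rsigma}.
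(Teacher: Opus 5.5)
Your proposal is correct and follows the paper's approach. The paper gives no separate proof of this lemma and simply refers back to the computation in Lemma~\ref{Rsigma} with $(d,r,a,c_u)$ replaced by $(1,1,b,c_v)$, which is what you carry out. Your explicit bound $e^{-\frac c2(x\cdot e-ct)}\le e^{cR/2}$ on the support, which makes $\sigma_0$ uniform in $t$, $c$ and $e$, spells out a step the paper only asserts in passing.
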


\section{Proofs of Theorems~\ref{Th1}-\ref{Th2}}
This section is devoted to the proofs of Theorems~\ref{Th1}-\ref{Th2}. Since conditions of Theorem~\ref{Th2} are stronger than Theorem~\ref{Th1}, all lemmas under conditions of Theorem~\ref{Th1} work for Theorem~\ref{Th2}. Throughout this section, $(u,v)(t,x)$ always denotes the solution of \eqref{pro1}.

\subsection{Proof of Theorem~\ref{Th1}}\label{Section1.7}
We first deal with Theorem~\ref{Th1}. From the results of single species \cite{HR},  we can easily have that $\W_u$ and $\W_v$ are spreading supersets of $u$ and $v$ respectively.

\begin{lemma}\label{uv0}
Assume that assumptions of Theorem~\ref{Th1} hold. For $e\in\mathcal{B}(U)$ and any $\varepsilon>0$, it holds that
 $$\sup_{x\in\mathcal{C}^{\varepsilon}_u(e)} u(t,tx)\rightarrow 0 \quad \hbox{ as $t\rightarrow +\infty$},$$
 where $\mathcal{C}^{\varepsilon}_u(e):=\cup_{\tau>1} B_{c_u(\tau-1)}(\tau (w_u(e)+\varepsilon) e)$. For $e\in\mathcal{B}(V)$ and any $\varepsilon>0$, it holds that
 $$\sup_{x\in\mathcal{C}^{\varepsilon}_v(e)} v(t,tx)\rightarrow 0 \quad \hbox{ as $t\rightarrow +\infty$},$$
 where $\mathcal{C}^{\varepsilon}_v(e):=\cup_{\tau>1} B_{c_v(\tau-1)}(\tau (w_v(e)+\varepsilon) e)$.
Moreover, sets $\W_u$ and $\W_v$ defined by \eqref{W-u} and \eqref{W-v} are spreading supersets of $u$ and $v$ respectively.
\end{lemma}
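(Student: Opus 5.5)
The plan is a comparison argument. The key observation is that $u$ is a subsolution of the scalar Fisher-KPP problem, so all upper bounds come from the single-species theory of \cite{HR}. Since $v\ge 0$, the component $u$ satisfies $u_t-d\Delta u-ru(1-u)\le 0$ with $u(0,\cdot)=\1_U$. By the comparison principle, $u\le \overline{u}$, where $\overline{u}$ solves \eqref{eq:u} with initial datum $\1_U$. Likewise $v\le\overline{v}$, where $\overline{v}$ solves $v_t=\Delta v+v(1-v)$ with $v(0,\cdot)=\1_V$. It therefore suffices to prove both assertions for the scalar solutions $\overline{u}$ and $\overline{v}$. The argument is symmetric, so I describe only $u$.

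\textbf{Step 1: cone estimate.} Fix $e\in\mathcal{B}(U)$ and $\varepsilon>0$. I would reproduce the upper bound of \cite{HR}. By \eqref{omega-u} and the continuity of $w_u$, the ray $\R^+(w_u(e)+\varepsilon)e$ lies at positive angular distance from $\R^+\mathcal{U}(U)$, beyond the $c_u$-neighbourhood. For every large $\tau$, the point $\tau(w_u(e)+\varepsilon)e$ is at distance at least $c_u\tau+\eta\tau$ from $U$, for some $\eta>0$. Only a bounded part of $U$ can come closer, and that part is absorbed using $\mathcal{B}(U)$.

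Dominate $\overline{u}$ by $\min(1,\ e^{rt}\int_U\Gamma_d(t,x-y)\,dy)$. Alternatively, dominate it by a superposition of planar supersolutions $\min\{1,Ae^{-\lambda(x\cdot\nu-c t)}\}$ with $\lambda=c_u/(2d)$. Then, for $x$ with $\mathrm{dist}(x,U)\ge (c_u+\eta)t$, the Gaussian estimate $e^{rt}e^{-\mathrm{dist}^2/(4dt)}$ decays exponentially because $(c_u+\eta)^2/(4d)>r$.

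Every point $tx$ with $x\in\mathcal{C}_u^{\varepsilon}(e)$ satisfies $\mathrm{dist}(tx,U)\ge (c_u+\eta')t$. The reason is that $x\in B_{c_u(\tau-1)}(\tau(w_u(e)+\varepsilon)e)$, and the distance from $\tau t (w_u(e)+\varepsilon) e$ to $U$ is at least $(c_u+\eta)\tau t$ up to a bounded error. Subtracting the radius $c_u(\tau-1)t$ leaves at least $c_u t+\eta\tau t$. This gives the uniform bound $\sup_{x\in\mathcal{C}_u^\varepsilon(e)}u(t,tx)\to0$.

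\textbf{Step 2: superset property.} The main technical point is the bounded-part correction: points of $U$ near the origin must be handled by splitting $U=(U\cap B_{R})\cup(U\setminus B_R)$. For $U\cap B_R$, the Gaussian tail is controlled since $|tx|\ge c_u t+\eta t$. For $U\setminus B_R$, I use the cone from $\mathcal{B}(U)$, with a compactness argument to make $\eta$ uniform.

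For the superset property, let $C\subset\R^N\setminus\overline{\W_u}$ be compact. Each $x\in C$ satisfies $|x|>w_u(\widehat x)$, so $\widehat{x}\notin\mathcal{U}(U)$. By \eqref{eq:BUS}, this gives $\widehat x\in\mathcal{B}(U)$. Choose $\varepsilon$ small enough that $x$ lies in the open set $\mathcal{C}_u^{\varepsilon}(\widehat x)$; the $\tau$ slightly above $|x|/(w_u+\varepsilon)$ is where this happens. By continuity of $w_u$, a neighbourhood of $x$ lies in $\mathcal{C}_u^{\varepsilon}(\widehat x)$ as well. Covering $C$ by finitely many such cones and applying Step 1 gives \eqref{superset}.

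Alternatively, one could cite directly that $\W_u$ is the spreading set of $\overline{u}$ from \cite{HR}, together with $0\le u\le\overline{u}$. This yields the superset statement immediately, and leaves only the cone version, which is a slight uniform strengthening of the same computation.
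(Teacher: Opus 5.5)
Your proposal is correct and follows the paper's route: the paper also observes that $(\tilde u,0)$ is a supersolution, so $u\le\tilde u$ with $\tilde u$ the scalar Fisher--KPP solution starting from $\1_U$. It then simply cites \cite[Lemma~4.4]{HR} for the cone estimate and \cite[Theorem~2.2]{HR} for the superset property, which is your closing alternative. Your Step~1 is a self-contained Gaussian-tail reproof of that cited cone estimate, and it works provided the compactness step uses the global hypothesis $\mathcal{B}(U)\cup\mathcal{U}(U)=\mathbb{S}^{N-1}$ from \eqref{eq:BUS} rather than merely $e\in\mathcal{B}(U)$; that hypothesis is what confines $U\setminus B_R$ to an arbitrarily thin conical neighbourhood of $\R^+\mathcal{U}(U)$.
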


\begin{proof}	
The maximum principle for the system \eqref{pro1} implies that $0\le u(t,x)\le 1$ and $0\le v(t,x)\le 1$.	Let $\tilde{u}(t, x),$ be the solution of 
$$ \tilde{u}_t=d\Delta \tilde{u}+r\tilde{u}(1-\tilde{u}),\quad t>0,\, x\in\R^N,$$
with the initial condition $\tilde{u}(0,x) =\mathbbm{1}_U$. Because of the hair trigger effect and since $U\supset U_{\rho}\neq \emptyset$ with $\rho>0$ given by Proposition~\ref{proposition1}, it follows from \cite[Lemma~4.4]{HR} that 
$$\sup_{x\in\mathcal{C}^{\varepsilon}_u(e)} \widetilde{u}(t,tx)\rightarrow 0 \quad \hbox{ as $t\rightarrow +\infty$},$$
and from \cite[Theorem~2.2]{HR} that $\W_u$ defined by \eqref{W-u} is the spreading set of $\tilde{u}$. Especially,
$$
\lim_{t\to+\infty}\,\Big(\max_{x\in C} \tilde{u}(t,tx)\Big)=0,  \, \text{ for any non-empty compact set }C\subset\R^N\setminus \overline{\mc{W}_u}.
$$

Clearly, $(\tilde{u},0)(t,x)$ is a supersolution for the system \eqref{pro1}. Hence, $u(t,x)\le \tilde{u}(t,x)$ for $t\ge 0$ and $x\in\R^N$ and the conclusions follow. 

The same arguments deduce the same conclusions for $v$. 
\end{proof} 

We make the conclusions of Lemma~\ref{uv0} stronger such that one direction $e$ could be all directions of a neighbor of $e$.

\begin{lemma}\label{uv1}
Assume that assumptions of Theorem~\ref{Th1} hold. For any closed subset $\mathcal{B}_1$ of $\mathcal{B}(U)$ and $\varepsilon>0$, it holds that
 $$\sup_{x\in \underset{e\in \mathcal{B}_1}{\cup}\mathcal{C}^{\varepsilon}_u(e)} u(t,tx)\rightarrow 0 \quad \hbox{ as $t\rightarrow +\infty$}$$
  where $\mathcal{C}^{\varepsilon}_u(e)$ is defined in Lemma~\ref{uv0}.
 For any closed subset $\mathcal{B}_2$ of $\mathcal{B}(V)$ and $\varepsilon>0$, it holds that
 $$\sup_{x\in \underset{e\in \mathcal{B}_2}{\cup}\mathcal{C}^{\varepsilon}_v(e)} v(t,tx)\rightarrow 0 \quad \hbox{ as $t\rightarrow +\infty$},$$
  where $\mathcal{C}^{\varepsilon}_v(e)$ is defined in Lemma~\ref{uv0}.
\end{lemma}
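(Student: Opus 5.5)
As in Lemma~\ref{uv0}, the plan is to reduce the statement to the scalar KPP solution and then upgrade pointwise-in-$e$ decay to decay uniform over $e\in\mathcal{B}_1$ by a compactness argument. We treat only $u$; the statement for $v$ follows verbatim with $c_v$, $w_v$, $\mathcal{B}(V)$ and $\mathcal{B}_2$.

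Let $\tilde u$ solve $\tilde u_t=d\Delta\tilde u+r\tilde u(1-\tilde u)$ with $\tilde u(0,\cdot)=\1_U$. As shown in the proof of Lemma~\ref{uv0}, $(\tilde u,0)$ is a supersolution of \eqref{pro1}, so $0\le u\le\tilde u$. It therefore suffices to prove
$$\sup_{x\in\cup_{e\in\mathcal{B}_1}\mathcal{C}^{\varepsilon}_u(e)}\tilde u(t,tx)\to0 \quad\hbox{as } t\to+\infty.$$

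For the compactness step, note that $\mathcal{B}_1$ is closed in $\mathbb{S}^{N-1}$, hence compact. The map $e\mapsto w_u(e)$ is continuous and finite on $\mathcal{B}(U)$, since $w_u(e)=+\infty$ only on $\mathcal{U}(U)$ and $\mathcal{B}(U)\cap\mathcal{U}(U)=\emptyset$. Fix $e_0\in\mathcal{B}_1$. We claim there is a neighborhood $O_{e_0}$ of $e_0$ in $\mathbb{S}^{N-1}$ such that
$$\cup_{e\in O_{e_0}}\mathcal{C}^{\varepsilon}_u(e)\subset \mathcal{C}^{\varepsilon/2}_u(e_0).$$
To see this, observe that $\mathcal{C}^{\varepsilon}_u(e)$ is the open cone-like region with apex $(w_u(e)+\varepsilon)e$, axis $e$, and half-opening angle $\theta$ with $\sin\theta=c_u/(w_u(e)+\varepsilon)$; indeed it is the union of the balls $B_{c_u(\tau-1)}(\tau(w_u(e)+\varepsilon)e)$. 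For $e$ close to $e_0$, the apex $(w_u(e)+\varepsilon)e$ lies in the interior of $\mathcal{C}^{\varepsilon/2}_u(e_0)$, at a distance from its boundary comparable to $\varepsilon/2$, because $w_u$ is continuous. The axis and angle also vary only slightly, since the half-angle of $\mathcal{C}^{\varepsilon}_u(e)$ is strictly smaller than that of $\mathcal{C}^{\varepsilon/2}_u(e_0)$ by a margin bounded below near $e_0$.

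The inclusion must hold along the whole unbounded cone, not just near the apex, and this is the main obstacle. It is a small computation: $x\in\mathcal{C}^{\varepsilon}_u(e)$ iff $x=\tau(w_u(e)+\varepsilon)e+y$ with $|y|<c_u(\tau-1)$. We want to write
$$x=\tau'(w_u(e_0)+\varepsilon/2)e_0+y', \qquad |y'|<c_u(\tau'-1).$$
Take $\tau'=\tau(w_u(e)+\varepsilon)/(w_u(e_0)+\varepsilon/2)$. Then $\tau'\ge\tau(1+\eta)$ for some $\eta>0$ when $e$ is near $e_0$. We also have
$$|y'|\le c_u(\tau-1)+\tau(w_u(e)+\varepsilon)|e-e_0|.$$
This is less than $c_u(\tau'-1)$ provided $|e-e_0|$ is small compared to $\eta c_u/(w_u(e_0)+\varepsilon)$, uniformly in $\tau>1$, which proves the claim.

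Cover $\mathcal{B}_1$ by finitely many such neighborhoods $O_{e_1},\dots,O_{e_k}$. This gives
$$\cup_{e\in\mathcal{B}_1}\mathcal{C}^{\varepsilon}_u(e)\subset\cup_{i=1}^k\mathcal{C}^{\varepsilon/2}_u(e_i).$$
By Lemma~\ref{uv0} (equivalently \cite[Lemma~4.4]{HR}) applied to each $e_i\in\mathcal{B}(U)$ with $\varepsilon/2$, we get $\sup_{x\in\mathcal{C}^{\varepsilon/2}_u(e_i)}\tilde u(t,tx)\to0$. Taking the maximum over finitely many $i$ gives the conclusion.
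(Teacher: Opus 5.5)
Your argument is correct. Its skeleton matches the paper's: compactness of $\mathcal{B}_1$, continuity and finiteness of $w_u$ on $\mathcal{B}(U)$, reduction to finitely many cones, then Lemma~\ref{uv0}. The covering step, however, is organized differently.

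The paper keeps the same $\varepsilon$ throughout. It enlarges $\mathcal{B}_1$ to an open set $\mathcal{B}'\subset\mathcal{B}(U)$. It then covers the trace of $\cup_{e\in\mathcal{B}_1}\mathcal{C}^{\varepsilon}_u(e)$ on a large sphere $\partial B_R$ by finitely many $\mathcal{C}^{\varepsilon}_u(e_i)$ with $e_i\in\mathcal{B}'$, and extends this cover to the exterior of $B_R$ using the conical shape. The bounded part inside $B_R$ is handled separately, via a compact subset of $\R^N\setminus\overline{\W_u}$ and the spreading-superset property.

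You instead give up half of $\varepsilon$ to get the nesting $\mathcal{C}^{\varepsilon}_u(e)\subset\mathcal{C}^{\varepsilon/2}_u(e_0)$ for all $e$ near $e_0$. Your explicit choice $\tau'=\tau(w_u(e)+\varepsilon)/(w_u(e_0)+\varepsilon/2)$ makes this estimate uniform in $\tau>1$. As a result, the whole unbounded union is covered at once by finitely many $\mathcal{C}^{\varepsilon/2}_u(e_i)$ with $e_i\in\mathcal{B}_1$, with no near/far splitting and no appeal to the superset property for the bounded part.

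Your version is arguably the more robust of the two. The cones $\mathcal{C}^{\varepsilon}_u(e)$ have different apexes, so two steps in the paper need extra care:
\begin{itemize}
\item passing from a cover of the trace on $\partial B_R$ to a cover of the whole exterior, since a point of $\mathcal{C}^{\varepsilon}_u(e)\setminus B_R$ need not be a radial dilation of a covered point of $\partial B_R$;
\item the claim that the closure of the union lies in $\cup_{e\in\mathcal{B}'}\mathcal{C}^{\varepsilon}_u(e)$, which fails at the apexes.
\end{itemize}
Your $\varepsilon/2$ margin supplies that care automatically.

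The detour through $\tilde u$ is harmless but unnecessary, since Lemma~\ref{uv0} already gives the decay of $u$ itself on each fixed cone.
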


\begin{proof}
Since $\mathcal{B}(U)$ is open and $\mathcal{B}_1$ is closed, there is an open set $\mathcal{B}'$ such that $\mathcal{B}_1\subset \mathcal{B}'\subset \mathcal{B}(U)$. By the definition of $\mathcal{C}^{\varepsilon}_u(e)$ and the continuity of $w_u(e)$, one has that $\overline{\cup_{e\in\mathcal{B}_1} \mathcal{C}^{\varepsilon}_u(e)}\subset \cup_{e\in\mathcal{B}'} \mathcal{C}^{\varepsilon}_u(e)$. Then, for $R>\sup_{e\in\mathcal{B}'} w_u(e) +\varepsilon$, because the set $\overline{\cup_{e\in\mathcal{B}_1} \mathcal{C}^{\varepsilon}_u(e)}\cap \partial B_R$ is compact, there is a finite number of $\mathcal{C}^{\varepsilon}_u(e_i)\cap \partial B_R$ with some $e_i\in \mathcal{B}'$ ($i=1,\cdots, n$) such that $\overline{\cup_{e\in\mathcal{B}_1} \mathcal{C}^{\varepsilon}_u(e)}\cap \partial B_R$ is covered by $\cup_{i=1}^n \mathcal{C}^{\varepsilon}_u(e_i)\cap \partial B_R$. Since $\mathcal{C}^{\varepsilon}_u(e)$ is a cone, it implies that $\cup_{e\in \mathcal{B}_1}\mathcal{C}^{\varepsilon}_u(e)\setminus B_R \subset \cup_{i=1}^n \mathcal{C}^{\varepsilon}_u(e_i)$. While, it is clear that $(\cup_{e\in\mathcal{B}_1} \mathcal{C}^{\varepsilon}_u(e))\cap B_R$ is a subset of $\R^N\setminus \overline{\mathcal{W}_u}$ with a positive distance against $\mathcal{W}_u$. So, $(\cup_{e\in\mathcal{B}_1} \mathcal{C}^{\varepsilon}_u(e))\cap B_R$ can be covered by a compact set $C\subset \R^N\setminus\overline{\mathcal{W}_u}$.  Then, by Lemma~\ref{uv0}, we can immediately have the conclusion. 
\end{proof}

For the spreading of $u$, the worst case is that there is competition everywhere (that is, $V$ is the complimentary set of $U$), but even in this case, $u$  has a spreading set $\W_{uv}$. So,  $\W_{uv}$ is always a spreading subset of $u$ and $\R^N\setminus \overline{\W_{uv}}$ is always a spreading superset of $v$.

\begin{lemma}\label{sycuv}
Under assumptions of Theorem~\ref{Th1},	the set $\mathcal{W}_{uv}$ defined by \eqref{W-uv} is a spreading subset of $u$ and the set $\W_v\setminus \overline{\W_{uv}}$ is a spreading superset of $v$.
\end{lemma}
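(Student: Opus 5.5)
The plan is to compare $(u,v)$ with the solution of the problem in which $v$ occupies the complement of $U$, i.e.\ the worst case treated in \cite{BG}. Let $(\underline u,\overline v)$ be the solution of \eqref{pro1} with initial data $(\1_U,\1_{\R^N\setminus U})$. Since $V\subset\R^N\setminus U$, we have $u_0\ge \1_U$ and $v_0\le \1_{\R^N\setminus U}$. The system \eqref{pro1} is competitive, so after the change of variable $v\mapsto -v$ it is cooperative, and the comparison principle gives
$$u(t,x)\ge \underline u(t,x),\qquad v(t,x)\le \overline v(t,x)\quad\hbox{for } t\ge0,\ x\in\R^N.$$
The hypotheses of Theorem~\ref{Th1} contain $U_\rho\neq\emptyset$ and $\mathcal{B}(U)\cup\mathcal{U}(U_\rho)=\mathbb{S}^{N-1}$, together with (A1)--(A2). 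These are exactly the hypotheses of the result of \cite{BG} recalled in the introduction. That result says $\W_{uv}$ is the spreading set of $\underline u$ and $\R^N\setminus\overline{\W_{uv}}$ is the spreading set of $\overline v$.

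For the subset statement, take a compact $C\subset\W_{uv}$. Then $\min_{x\in C}\underline u(t,tx)\to1$, and since $1\ge u\ge\underline u$ we get $\min_{x\in C}u(t,tx)\to1$. So $\W_{uv}$ satisfies \eqref{subset} for $u$. I will also note that $\max_{x\in C}v(t,tx)\le\max_{x\in C}\overline v(t,tx)\to0$ on such compacts, because $C$ stays a positive distance from $\R^N\setminus\overline{\W_{uv}}$. This may be needed later, and \cite{BG} gives it once one checks that their statement includes convergence of the pair to $(1,0)$ inside $\W_{uv}$.

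For the superset statement for $v$, take a compact $C\subset\R^N\setminus\overline{\W_v\setminus\overline{\W_{uv}}}$. Since $\overline{A\setminus\overline{B}}\supset\overline A\setminus\overline B$ for open $A$, I would split $C$ into two pieces. The first is $C\cap\{x:\mathrm{dist}(x,\overline{\W_v})\ge\eta\}$, on which Lemma~\ref{uv0} gives $v(t,tx)\to0$. The second is the part of $C$ lying near or inside $\overline{\W_{uv}}$. There I would use $v\le\overline v$ and the convergence $\overline v\to0$ on compacts inside $\W_{uv}$.

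The main obstacle is the boundary layer: points of $C$ in $\partial\W_{uv}\cap\W_v$, or close to it, are not covered cleanly by either piece. Note that a point of $\partial\W_{uv}$ lying in the interior of $\W_v$ belongs to $\overline{\W_v\setminus\overline{\W_{uv}}}$, and so cannot lie in $C$. Hence $C\cap\partial\W_{uv}\subset\R^N\setminus\W_v$, i.e.\ all such points are outside $\W_v$. I would therefore argue by compactness. Every point of $C$ either lies in the open set $\W_{uv}$, or has a neighborhood disjoint from $\overline{\W_v}$, or lies on $\partial\W_{uv}\setminus\W_v$. In the last case $x\notin\overline{\W_v\setminus\overline{\W_{uv}}}$ gives a small ball $B$ around $x$ with $B\cap\W_v\subset\overline{\W_{uv}}$. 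Inside $B$, I would use the continuity of $\partial\W_{uv}$ and $\partial\W_v$ (from the continuity of $w_{uv}$ and $w_v$) to shrink $B$ slightly, so that each point of $C\cap B$ has a compact neighborhood either inside $\W_{uv}$ or outside $\overline{\W_v}$. Finally I would cover $C$ by finitely many such compact pieces and apply the two convergences on each, which gives \eqref{superset} for $v$ with the set $\W_v\setminus\overline{\W_{uv}}$.
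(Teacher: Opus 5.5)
Your subset part is correct, but it takes a different route from the paper. You compare with the solution for the complementary data $(\1_U,\1_{\R^N\setminus U})$ and import the full spreading-set result of \cite{BG}. The paper instead places the ball solution $(\underline u_\rho,\overline v_\rho)$ of Lemma~\ref{lemma:sub} at every point of $U_\rho$ to get \eqref{Bcuv}. It then uses $\mathcal U(U)=\mathcal U(U_\rho)$ to cover compact subsets of $\W_{uv}$ by balls $B_{c'}(\lambda\xi)$. Both arguments give $u(t,tx)\to1$ and $v(t,tx)\to0$ uniformly on compact subsets of $\W_{uv}$. Yours is shorter. The paper's relies only on Lemma~\ref{lemma:sub} and also produces \eqref{Bcuv}, which is reused in the proof of Lemma~\ref{lemma:eball-c}.

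The superset part has a genuine gap, exactly at the obstacle you identified. Put $O:=\W_{uv}\cup(\R^N\setminus\overline{\W_v})$. Lemma~\ref{uv0}, together with $v\to0$ on compacts of $\W_{uv}$, gives $\max_C v(t,tx)\to0$ for compact $C\subset O$. No covering argument can give more than this. A point $x\in C\cap\partial\W_{uv}\cap\partial\W_v$ lies in no compact subset of $\W_{uv}$ and in no compact subset of $\R^N\setminus\overline{\W_v}$. So your claim that, after shrinking $B$, every point of $C\cap B$ has a compact neighbourhood inside $\W_{uv}$ or outside $\overline{\W_v}$ already fails at the centre $x$. Continuity of $\partial\W_{uv}$ and $\partial\W_v$ does not help.

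Such points do occur under the hypotheses of Theorem~\ref{Th1}, assuming for instance $dr\le 1$, so that $c_{uv}<2$:
\begin{itemize}
\item In $\R^2$, let $U$ be a thick neighbourhood of the rays $\R^+\xi_+$ and $\R^+\xi_-$, where $\xi_\pm=(\pm\sin\alpha,\cos\alpha)$ with $\sin\alpha=c_{uv}/2$. Let $V$ be a small ball far from $U$.
\item Then $\W_v=B_2$. Near $x^*=(0,2)$, the set $\R^2\setminus\overline{\W_{uv}}$ is the open wedge $\{x^*+a\xi_++b\xi_-:\ a,b>0\}$, which is contained in $\{x_2>2\}$.
\item Hence $\W_v\setminus\overline{\W_{uv}}$ is empty near $x^*$, and $C=\overline{B_\eta(x^*)}$ is an admissible compact set for small $\eta$. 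Yet $x^*\in\partial\W_{uv}\cap\partial\W_v$.
\end{itemize}
Proving $\max_C v(t,tx)\to0$ there would require sublinear information on where the $u$-front and the $v$-front actually sit near $tx^*$. Neither of your two convergences supplies this.

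The paper does not supply it either. It simply asserts that $\W_v\cap(\R^N\setminus\overline{\W_{uv}})$ is a spreading superset because both sets are, which is the same unjustified step. What your argument, and the paper's, actually establishes is the superset property for compact subsets of $O$. As far as I can see, the later lemmas only use $v\to0$ on compact subsets of $\W_{uv}$, so that weaker statement is the one to state and prove.
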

\begin{proof}
Take any point $x_0\in U_\rho$. So, $B_{\rho}(x_0)\subset U$. By the comparison principle, we have $u(t, x)\ge \underline u_\rho(t, x-x_0), v(t, x)\le \overline v_\rho(t, x-x_0)$ for all $x\in \mathbb{R}^{N}$, where $(\underline{u}_{\rho},\overline{v}_{\rho})$ is the solution of \eqref{eq-uv-rho}. By Lemma \ref{lemma:sub}, it then follows that 
\be\label{Bcuv}
	\lim_{t\to +\infty}\sup_{x\in U_\rho+B_{ct}}\left(|u(t, x)-1|+|v(t, x)|\right)=0, \quad \hbox{for any}\quad
	0<c<c_{uv}.
\ee
	
Clearly, one has 
$$\mathcal{U}(U)\supset \mathcal{U}(U_{\rho})=\mathbb{S}^{N-1}\setminus \mathcal{B}(U)\supset \mathcal{U}(U),$$	
that is, $\mathcal{U}(U)=\mathcal{U}(U_{\rho})$. Take any $0<c'<c<c_{uv}$ and $\xi\in\mathcal{U}(U)=\mathcal{U}(U_{\rho})$. By the definition of $\mathcal{U}(U_{\rho})$, one has
$$\frac{1}{\lambda t}\hbox{dist}( \lambda\xi t,U_{\rho})\rightarrow 0, \quad \hbox{as $t\rightarrow +\infty$ for any fixed $\lambda>0$}.$$
Thus, $B_{c' t}(\lambda \xi t)\subset U_\rho+B_{ct}$ for large $t$. Notice that this inclusion also holds for $\lambda=0$. By \eqref{Bcuv}, it follows that
$$\lim_{t\to +\infty}\sup_{x\in B_{c' t}(\lambda \xi t)}\left(|u(t, x)-1|+|v(t, x)|\right)=0, \quad \hbox{for any $\lambda\ge 0$}.$$

 On the other hand, for any compact set $\mathcal{C}\subset \W_{uv}=\mathbb{R}^{+}\mathcal{U}(U)+B_{c_{uv}}$, it can be covered by finitely many balls of the form $B_{c'}(\lambda \xi)$, that is, $\mathcal{C}\subset \cup_{i=1}^m B_{c_i}(\lambda_i \xi_i)$. Then, 
$$\lim_{t\to+\infty}\,\Big(\max_{x\in C} \left(|u(t, xt)-1|+|v(t, xt)|\right)\Big)=0, $$ 
Therefore, $\mathcal{W}_{uv}$ is a spreading subset of $u$ and $\R^N\setminus \overline{\W_{uv}}$ is a spreading superset of $v$. Since $\W_v$ is a spreading superset of $v$, one has that $\W_v\cap (\R^N\setminus \overline{\W_{uv}})=\W_v\setminus \overline{\W_{uv}}$ is a spreading superset of $v$.
\end{proof}

Now, for directions $e\in\mathbb{S}^{N-1}$ such that $w_u(e)>w_v(e)$, we prove some spreading properties of the solution $(u,v)(t,x)$ under the condition \eqref{Pathwu>wv}. If the projection of $e$ on $\R^+\mathcal{U}(U)\cup\{0\}$ is $0$, $P(e,\R^+\mathcal{U}(U)\cup\{0\})=\{\lambda e:\, 0\le \lambda\le 1\}$ and \eqref{Pathwu>wv} is automatically satisfied since $\hat{\xi}=e$ for every $\xi\in P(e,\R^+\mathcal{U}(U)\cup \{0\})\setminus\{0\}$ and $w_u(e)=c_u$ from \eqref{omega-u}. Then we have the following conclusion. 

\begin{lemma}\label{lemma:bar-e=0}
Assume that assumptions of Theorem~\ref{Th1} hold. For every $e\in \mathbb{S}^{N-1}$ such that $w_u(e)=c_u>w_v(e)$ and $w_v(e)<c<c_u$,  there is $\varepsilon>0$ such that
$$\lim_{t\to +\infty}\sup_{x\in B_{\varepsilon}(ce)}\left \{ |u(t, xt)-1| + |v(t, xt)|\right \} =0.$$
\end{lemma}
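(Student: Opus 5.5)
Since $c>w_v(e)$ and $e\in\mathcal{B}(V)$ (otherwise $w_v(e)=+\infty$), I will use Lemma~\ref{uv0} and Lemma~\ref{uv1} to control $v$ near the ray through $ce$. The cone $\mathcal{C}^{\varepsilon'}_v(e)$ with $\varepsilon'$ small contains a neighborhood of $\{se:\, s\ge w_v(e)+\varepsilon'\}$ of linearly growing width. More precisely, by continuity of $w_v$ on a closed neighborhood $\mathcal{B}_2\subset\mathcal{B}(V)$ of $e$, Lemma~\ref{uv1} gives $\sup v(t,tx)\to 0$ uniformly on an open cone-like set $\mathcal{D}$. This set contains a neighborhood of the segment $\{se:\, c_1\le s\le c_2\}$ for any $w_v(e)<c_1<c_2<c_u$. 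I fix $w_v(e)<c_1<c<c_2<c_u$ and a compact set $C:=\{x:\,\hbox{dist}(x,[c_1,c_2]e)\le\eta\}\subset\mathcal{D}\cap (B_{c_u}\setminus\{0\})$ with $\eta>0$ small. Then there is $T_0$ such that $v(t,x)\le \frac{1}{4a}(1-\max_{\xi\in C}|\xi|^2/c_u^2)$ for $t\ge T_0$ and $x\in Ct+B_R$. Here $R$ is that of Lemma~\ref{Rsigma}; enlarging $T_0$ absorbs $B_R$ because $\mathcal{D}$ is open with linear growth.

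Next I will launch a compactly supported subsolution at time $T_0$ and push it along the ray. Since $u(T_0,\cdot)$ is positive, Lemma~\ref{delta-uv} gives $u(T_0,\cdot)\ge\delta$ on any bounded set, in particular near $c_1T_0e$. Using the time-shifted version of \eqref{uRe} with speeds $c'e\in C$, I get $u(t,x)\ge \underline{u}_{c',e}(t-T_0,x-c_1T_0e)$ for $t\ge T_0$ by comparison, via Lemma~\ref{Rsigma}. Here $\sigma$ is chosen small so that $\underline u$ lies below $u(T_0,\cdot)$ at $t=T_0$. There is one bookkeeping point: the support $B_R(c_1T_0e+c'(t-T_0)e)$ has to remain in $Ct+B_R$. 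This holds if I take $c'$ between $c_1$ and $c_2$, because then the center $(c_1T_0+c'(t-T_0))/t$ stays in $[c_1,c_2]$. The exponential factor decays, however, so this only gives a small positive lower bound. To upgrade it, I will instead use the function $e^{\dots}\Phi_R$ argument in the standard way. Along the moving frame, the Galilean change $\underline{u}$ makes the profile $\sigma e^{-\frac{c'}{2d}(x-c't e)\cdot e}\Phi_R(x-c'te)$ stationary in the moving frame, which gives a bounded-below quantity $\ge \sigma e^{-c'R/(2d)}\min\Phi_R$ on $B_{R/2}(c'te)$. Doing this for every $c'$ in a neighborhood of $c$, I get $u(t,x)\ge\kappa>0$ on $\{x:\, |x-sct e|\le R/2\}$ for $s$ near $1$. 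A uniform version with base points $c_1T_0e+y$, $|y|$ small, covers all of $B_{\varepsilon}(ce)t$.

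Finally I will turn $u\ge\kappa$ with $v$ small into $(u,v)\to(1,0)$. The cleanest route seems to be to compare $u$ from below with the KPP-type problem $u_t\ge d\Delta u+ru(1-\mu-u)$ on the region where $v\le\mu/a$. Alternatively, once $u\ge\kappa$ and $v\le\delta_0$ on a large ball $B_{\rho'}(x_t)$ at a large time, I can apply Lemma~\ref{lemma:sub}; for this I need $u\ge 1-\delta$, not just $\kappa$. So I will first use the scalar KPP lower bound (with $v$ tiny) on the slab, via a local hair-trigger argument over a time interval of order $1$ and size independent of $t$, to raise $u$ to $1-\delta$ on balls of radius $\rho$ around every point of $B_{\varepsilon}(ce)t$. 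Then Lemma~\ref{lemma:sub}, applied from those times, keeps $(u,v)$ within $2\delta$ of $(1,0)$, and since $\delta$ is arbitrary this gives the claim. The main obstacle I expect is this uniform upgrade from the positive lower bound $\kappa$ to closeness to $1$, uniformly over $B_{\varepsilon}(ce)t$. The difficulty is that $v$ is only known to be small, not zero, and the region moves. I would first try the comparison with the scalar equation $w_t=d\Delta w+rw(1-a\mu-w)$ on balls with Dirichlet data $0$; its solution converges to $1-a\mu$ in a time that is uniform with respect to the center.
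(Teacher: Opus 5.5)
Your skeleton matches the paper's. You use Lemma~\ref{uv0} to make $v$ small on a linearly growing neighbourhood of the ray through $ce$. You launch the moving subsolutions of Lemma~\ref{Rsigma} from a point where $u\ge\delta$ by Lemma~\ref{delta-uv}. Then you upgrade the resulting positive lower bound to $u\approx 1$ using $v\approx 0$. Your bookkeeping is simpler than the paper's sets $Q_s$: you launch from $c_1T_0e$ and note that $(c_1T_0e+c'e'(t-T_0))/t$ is a convex combination of two points of the convex tube $C$.

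One step fails as written, though the fix is immediate. Bounded shifts $c_1T_0e+y$ of the launch point, combined with velocities $c'e$ parallel to $e$, only give $u\ge\kappa$ on a tube of \emph{bounded} width around the ray. But $tB_\varepsilon(ce)=B_{\varepsilon t}(cte)$ has radius growing linearly in $t$. You must let the velocity vector range over a ball, $c'e'\in B_\eta(ce)\subset C$, which Lemma~\ref{Rsigma} allows. By convexity of $C$ the supports stay in $tC+B_R$. The union of the balls $B_{R/2}(c_1T_0e+c'e'(t-T_0))$ is then $B_{\eta(t-T_0)+R/2}(cte-(c-c_1)T_0e)$, which contains $B_{\eta t/2}(cte)$ for large $t$. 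The paper does exactly this with $C=B_{c'\kappa\varepsilon/c}(c'e)$. Also, the exponential factor does not decay at the moving centre, where it equals $1$; that is why the lower bound on $B_{R/2}$ of the centre is immediate.

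The upgrade step is where you genuinely differ. The paper aims the subsolutions at speed $c'>c$, so they reach $B_{\kappa\varepsilon t}(cte)$ at an intermediate time of order $ct/c'$. It then holds the bound there until time $t$ with the stationary subsolution $\eta\Phi_{R/2}(x-x_0)$. Finally it argues by compactness: a limit $u_\infty$ is a supersolution of $w_t=d\Delta w+rw(1-a\mu-w)$ in $\mathbb{R}^N$ and is $\ge\eta'$ on a fixed ball for all $t\le 0$, so Aronson--Weinberger gives $u_\infty\ge 1-a\mu$.

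Your route compares $u$ on $B_{R'}(x_0)\times[t-\tau,t]$ with the Dirichlet KPP problem. Its solution approaches the positive steady state (close to $1-a\mu$ at the centre for $R'$ large) in a time $\tau$ independent of $x_0$. This needs the lower bound only at the single earlier time $t-\tau$, which your corrected moving subsolutions provide at every large time once $\varepsilon<\eta/2$. So it avoids both the speed-$c'$ detour and the compactness argument, and it is more quantitative. With it, Lemma~\ref{lemma:sub} is unnecessary: $u\ge 1-\delta$ comes from the Dirichlet comparison, and $v\to 0$ on $tB_\varepsilon(ce)$ comes directly from Lemma~\ref{uv0}, as in \eqref{Bev0}.
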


\begin{proof}
Since $w_u(e)=c_u$ and $w_v(e)<c_u$, one has $e\in \mathcal{B}(U)$ and $e\in \mathcal{B}(V)$. Take 
$$w_v(e)<c<c'<c_u.$$
and
\be\label{epsilon}
0<\varepsilon<\min\left(1,\frac{w_v(e)}{2},\frac{c-w_v(e)}{3},\frac{c'-c}{2},\frac{c_u-c'}{2},c\Big(\frac{c_u}{c'}-1\Big)\right).
\ee
Since $\partial\W_v$ is continuous, even if it means decreasing $\varepsilon$, assume that $\overline{B_{\varepsilon}(ce)}\subset \R^N\setminus \overline{\W_v}$. By Lemma~\ref{uv0}, one has
\be\label{Bev0}
\lim_{t\rightarrow +\infty}\sup_{x\in B_{\varepsilon}(ce)} |v(t,xt)|=0.
\ee
 By \eqref{epsilon},  $c>w_v(e)+3\varepsilon$.
Take $M>1$ sufficiently large such that $\hat{c}(1-\frac{1}{M})>w_v(e)+2\varepsilon$ for any $\hat{c}\ge w_v(e)+3\varepsilon$. 
For $s\in [T, +\infty)$ with $T>0$ to be chosen, consider the set
\be\label{Qs}
Q_s:= \underset{\hat{c}\ge w_v(e)+3\varepsilon}{ \bigcup} B_{\varepsilon s}\Big( 2MT w_v(e) e  +\hat{c} (s-T) e\Big).
\ee
We show that $v(s,x)$ is small in $Q_s$ for $s\ge T$ by taking $T$ large. By Lemma~\ref{uv0}, the key step is to show that $\frac{1}{s} Q_s\subset \mathcal{C}_{v}^{\varepsilon}(e)\cup C$ for some compact set $C\subset \R^N\setminus\overline{\W_v}$.
Then, 
$$\frac{1}{s}Q_s=\underset{\hat{c}\ge w_v(e)+3\varepsilon}{ \bigcup} \Big(\Big[ \frac{2 MT w_v(e) }{s}+\hat{c}(1-\frac{T}{s})\Big] e +B_{\varepsilon}\Big).$$
For $s\in [T,MT]$, it follows from \eqref{epsilon} that
$$\frac{2MT w_v(e)}{s}+\hat{c}(1-\frac{T}{s})\ge 2w_v(e)> w_v(e)+2\varepsilon.$$ 
For $s\ge MT$, one also has 
$$\frac{2MT w_v(e)}{s}+\hat{c}(1-\frac{T}{s})\ge  \hat{c}(1-\frac{1}{M})>w_v(e)+2\varepsilon.$$
Then, $\frac{1}{s}Q_s$ is included in the $\varepsilon$-neighborhood of the ray $\{\tau (w_v(e)+2\varepsilon)e:\, \tau>1\}$. There is a compact set of $C\subset \R^N\setminus \overline{\W_{v}}$ such that $\frac{1}{s}Q_s \subset C\cup \mathcal{C}^{\varepsilon}_v(e)$ for all $s\in  [T, +\infty)$, where $\mathcal{C}^{\varepsilon}_v(e)$ is defined in Lemma~\ref{uv0}. By Lemma~\ref{uv0}, for any $\mu>0$, there is $T>0$  such that
\be\label{v-Qs} 
v(s,x)\le \mu, \quad \hbox{for $s\in  [T,+\infty)$ and $x\in Q_s$}.
\ee

Let 
\be\label{kappa} 
\kappa:=\min\Big(\frac{c}{4c'},\frac{c}{4Mw_v(e)}\Big).
\ee
Then, $0<c'-\varepsilon/4\le c'-c'\kappa\varepsilon/c<c'+c'\kappa\varepsilon/c\le c'+\varepsilon<c_u-\varepsilon$ by \eqref{epsilon}. So, $B_{c'\kappa\varepsilon/c}(c' e)\subset B_{c_u}\setminus \{0\}$.
Let $R>0$ and $\sigma_0>0$ such that Lemma~\ref{Rsigma} holds for $C=B_{c'\kappa\varepsilon/c}(c' e)$. Let $T>0$ such that $\varepsilon T\ge 2R$ and \eqref{v-Qs} holds for 
$$\mu:= \min\Big(\frac{1}{4a}\Big(1-\frac{(c_u-\varepsilon)^2}{c_u^2}\Big),\frac{1}{2a}\Big).$$
Notice that $|\xi|\le c_u-\varepsilon$ for any $\xi\in C$.
Since $s\ge T$ and by \eqref{epsilon}, \eqref{kappa}, it follows that
$$\varepsilon s-\frac{\kappa\varepsilon}{c}(2MTw_v(e)+c'(s-T))=\Big(1-\frac{c'\kappa }{c}\Big)\varepsilon s-\frac{\kappa\varepsilon}{c}(2MTw_v(e)-c'T)\ge \frac{1}{2}\varepsilon T\ge R.$$
This implies that 
$$B_{\frac{\kappa\varepsilon}{c}(2MTw_v(e)+c'(s-T))+R}(2MT w_v(e) e  +c' (s-T) e) \subset B_{\varepsilon s}\Big( 2MT w_v(e) e  + c' (s-T) e\Big)\subset Q_s.$$
Furthermore, for each $x_0\in B_{\frac{\kappa\varepsilon}{c}2MTw_v(e)}(2MT w_v(e) e )$, 
\begin{equation}
\begin{aligned}
x_0+C (s-T)+B_R&=x_0+B_{\frac{c'\kappa\varepsilon}{c}(s-T)}(c' (s-T)e)+B_R\\
&\subset B_{\frac{\kappa\varepsilon}{c}(2MTw_v(e)+c'(s-T))+R}(2MT w_v(e) e  +c' (s-T) e),
\end{aligned}
\end{equation}
where $C=B_{c'\kappa\varepsilon/c}(c' e)$.
Thus,  for each $x_0\in B_{\frac{\kappa\varepsilon}{c}2MTw_v(e)}(2MT w_v(e) e )$, it follows from \eqref{v-Qs} that
\be\label{v-Qs2}
v(s,x)\le \mu,\quad  \hbox{ for $s\in [T,+\infty)$ and $x\in x_0+C (s-T)+B_R$}.
\ee 
On the other hand, it follows from Lemma~\ref{delta-uv} that there is $\delta>0$ such that
$$u(T,x)\ge \delta,\, \hbox{ for all $x\in B_{\frac{\kappa\varepsilon}{c}2MTw_v(e)+R}(2MT w_v(e) e )$}.$$
Take $\sigma\in (0,\sigma_0]$ such that $\sigma e^{\frac{(c_u-\varepsilon)R}{2d}}\le \delta$, which makes 
$$u(T,x+x_0)\ge \underline{u}_{\hat{c},e'}(0,x), \hbox{ for each $x_0\in B_{\frac{\kappa\varepsilon}{c}2MTw_v(e)}(2MT w_v(e) e )$},$$
 for every $\hat{c}e'\in C=B_{c'\kappa\varepsilon/c}(c' e)$.
By \eqref{v-Qs2}, Lemma~\ref{Rsigma} and the comparison principle, it leads to that for any $\hat{c}e'\in C=B_{c'\kappa\varepsilon/c}(c' e)$ and $x_0\in B_{\frac{\kappa\varepsilon}{c}2MTw_v(e)}(2MT w_v(e) e )$, it holds
$$u(s+T,x+x_0)\ge\underline{u}_{\hat{c},e'}(s ,x) =\sigma e^{-\frac{\hat{c}}{2d}(x\cdot e'  - \hat{c}s) }\Phi_{R}\left( x -\hat{c} s e'  \right),$$
 for $s\ge 0$.  For any $t>\frac{2MT w_v(e)}{c}$, by taking $s= \frac{1}{c'}(ct -2MT w_v(e)) $, it follows from above inequality that there is $\delta'>0$ such that
\be\label{initial-s}
 u(\frac{1}{c'}(ct -2MT w_v(e))+T,x)\ge \delta',  \hbox{ for $x\in x_0+B_{\frac{R}{2}}(\frac{\hat{c}}{c'}(ct -2MT w_v(e)) e')$}.
\ee
with  any $\hat{c}e'\in C=B_{c'\kappa\varepsilon/c}(c' e)$ and $x_0\in B_{\frac{\kappa\varepsilon}{c}2MTw_v(e)}(2MT w_v(e) e )$. Notice that
\be
\begin{aligned} 
&\underset{\hat{c}e'\in B_{c'\kappa\varepsilon/c}(c' e)}{ \bigcup} \Big( B_{\frac{\kappa\varepsilon}{c}2MTw_v(e)}(2MT w_v(e) e )+B_{\frac{R}{2}}(\frac{\hat{c}}{c'}(ct -2MT w_v(e)) e') \Big) \\
=&\underset{\hat{c}e'\in B_{c'\kappa\varepsilon/c}(c' e)}{ \bigcup} \Big( B_{\frac{\kappa\varepsilon}{c}2MTw_v(e)+\frac{R}{2}}(2MT w_v(e) e +\frac{\hat{c}}{c'}(ct -2MT w_v(e)) e')\Big)\\
=&\underset{\hat{c}e'\in B_{c'\kappa\varepsilon/c}(c' e)}{ \bigcup} \Big(2MT w_v(e) e +\frac{1}{c'}(ct -2MT w_v(e)) \hat{c}e' + B_{\frac{\kappa\varepsilon}{c}2MTw_v(e)+\frac{R}{2}}\Big)\\
=&  2MT w_v(e) e +\frac{1}{c'}(ct -2MT w_v(e))B_{\frac{c'\kappa\varepsilon}{c}}(c' e)   + B_{\frac{\kappa\varepsilon}{c}2MTw_v(e)+\frac{R}{2}}\\
=&  2MT w_v(e) e + B_{ \kappa\varepsilon t -\frac{\kappa\varepsilon}{c}2MTw_v(e)}(cte -2MT w_v(e)e)   + B_{\frac{\kappa\varepsilon}{c}2MTw_v(e)+\frac{R}{2}}\\
=& B_{\kappa\varepsilon t +\frac{R}{2}}(cte).
\end{aligned}
\ee
Then, \eqref{initial-s} is equivalent to 
\be\label{initial-se}
u(\frac{1}{c'}(ct -2MT w_v(e))+T,x)\ge \delta',  \hbox{ for $t>2MTw_v(e)/c$ and $x\in B_{\kappa\varepsilon t +\frac{R}{2}}(cte)$}.
\ee
 
Let $t>2MT w_v(e)/c$ be fixed sufficiently large which will be given later precisely. For $s\in [\frac{1}{c'}(ct -2MT w_v(e))+T, t]$  and for any $x_0\in B_{\kappa\varepsilon t}(cte)$, define
$$\underline{u}(s,x)=\eta\Phi_{R/2}(x-x_0).$$
with $\eta:=\min(\delta',\frac{1}{2})$.
Then, by \eqref{initial-se},
$$\underline{u}(\frac{1}{c'}(ct -2MT w_v(e))+T,x)\le \eta\le u(\frac{1}{c'}(ct -2MT w_v(e))+T,x),  \hbox{ for $x\in B_{R/2}(x_0)$},$$
with any $x_0\in B_{\kappa\varepsilon t}(cte)$.
One can check that
\begin{eqnarray*}
\begin{array}{lll}
 N[\underline{u},v]&:=&(\underline{u})_s-d\Delta\underline{u}-r\underline{u}(1-\underline{u}-av)\\
&=&    -d \eta \Delta\Phi_{R/2} 
   -r \eta \Phi_{R/2} (1-\eta\Phi_{R/2} -a v) \\
&\le &   -\eta\Phi_{R/2} (r+d\lambda_{R/2}-r\eta^2-a r v) .
\end{array}
\end{eqnarray*}
For $x\not\in B_{R/2}(x_0)$, it is clear that $N[\underline{u},v]=0$. On the other hand, it is clear that $Q_s$ is the $\varepsilon s$-neighborhood of the ray $\{2MT w_v(e) e  +\hat{c} (s-T) e:\, \hat{c}\ge w_v(e)+3\varepsilon\}$ for $s\ge T$ and $\varepsilon T\ge 2R$. For $s\in [\frac{1}{c'}(ct -2MT w_v(e))+T, t]$, one can fix $t$ large enough such that $c/(2c')\le s/t\le 1$ and so, $\kappa\varepsilon t\le \varepsilon s/2$ by \eqref{kappa}.  Since $c>w_v(e)+3\varepsilon$, one can make $t$ even larger such that $ct>2MT w_v(e) e  +(w_v(e)+3\varepsilon)(s-T)$.
This implies that $B_{\kappa\varepsilon t +\varepsilon s/2}(cte)\subset Q_s$ and by \eqref{v-Qs}, one has 
\be\label{v-mu}
v(s,x)\le \mu\le \frac{1}{2a}, \hbox{ for $x\in B_{\varepsilon s/2}(x_0)$ and $s\in [\frac{1}{c'}(ct -2MT w_v(e))+T, +\infty)$},
\ee
with any $x_0\in B_{\kappa \varepsilon t}(cte)$.
Notice that $\varepsilon s\ge R$ for $s\ge T$ which means $B_{R/2}(x_0)\subset B_{\varepsilon s/2}(x_0)$.
By \eqref{v-mu}, the definition of $\eta$ and $\lambda_{R}\rightarrow 0$ as $R\rightarrow +\infty$, even if it means increasing $R$,  one can make $r+d\lambda_{R/2}-r\eta^2-a r v>0$ and so, $N[\underline{u},v]<0$ for $x\in B_{R/2}(x_0)$ and $s\in [\frac{1}{c'}(ct -2MT w_v(e))+T, t]$, with any $x_0\in B_{\kappa \varepsilon t}(cte)$. Therefore, $\underline{u}(s,x)$ is a subsolution of $u$ and hence $u(s,x)\ge \eta\Phi_{R/2}(x-x_0)$ for all $s\in [\frac{1}{c'}(ct -2MT w_v(e))+T, t]$. In conclusion, there is $\eta'>0$ such that for all sufficiently large $t$,
\be\label{initial-s2}
u(s,x)\ge \eta', \hbox{ for all $s\in [\frac{1}{c'}(ct -2MT w_v(e))+T, t]$ and $x\in B_{\kappa \varepsilon t+R/4}(cte)$}.
\ee

We claim that 
\be\label{e-Bce}
\inf_{x\in B_{\kappa\varepsilon t}(cte)} u(t,x)\rightarrow 1, \hbox{ as $t\rightarrow +\infty$}.
\ee
Assume by contradiction that there are sequences $\{t_n\}_{n\in\mathbb{N}}$ and $\{x_n\}_{n\in\mathbb{N}}$ such that $x_n\in  B_{\kappa\varepsilon t_n}(ct_n e)$, $t_n\rightarrow +\infty$ as $n\rightarrow +\infty$ and $u(t_n,x_n)\le 1-\eta''$ for some $\eta''\in (0,1)$. Let $u_n(t,x)=u(t+t_n,x+x_n)$ and so, $u_n(0,0)<1-\eta''$. By \eqref{initial-s2}, it follows that
$$u_n(t,x)\ge \eta', \hbox{ for $t\in [-t_n+\frac{1}{c'}(ct_n-2MTw_v(e))+T,0]$ and $x\in B_{R/4}$},$$
for large $n$. By \eqref{v-mu}, one has $v(t+t_n,x+x_n)\le \mu$ for $t\in [-t_n+\frac{1}{c'}(ct_n-2MTw_v(e))+T,+\infty)$ and $x\in B_{\varepsilon s_n}$ where $s_n=\frac{1}{c'}(ct_n-2MTw_v(e))+T$.
 By the standard parabolic estimates, $u_n(t,x)$ converge up to extraction of a subsequence to $u_{\infty}(t,x)$ satisfying
$$(u_{\infty})_t-d\Delta u_{\infty}-r u_{\infty}(1-u_{\infty}-a\mu)\ge 0, \hbox{ for $t\in (-\infty,+\infty)$ and $x\in\R^N$},$$
and
$$u_{\infty}(0,0)<1-\eta'',\quad u_{\infty}(t,x)\ge \eta' \hbox{ for $t\in (-\infty,0]$ and $x\in B_{R/4}$}.$$
By the classical results of Aronson and Weinberger \cite{Aro}, one knows that $u_{\infty}(t,x)\ge 1-a\mu$ for all $t\in\R$ and $x\in\R^N$. Since $\mu$ can be arbitrarily small, one reaches a contradiction by taking $\mu$ small such that $1-a\mu>1-\eta''$.

The conclusion of this lemma follows from \eqref{e-Bce} and \eqref{Bev0}.
\end{proof} 
 
In general, the projection of $e$ on $\R^+\mathcal{U}(U)\cup\{0\}$ is not $0$. So, we deal with more general situations than Lemma~\ref{lemma:bar-e=0} in the following lemma. The main idea of the proof has actually been displayed in Lemma~\ref{lemma:bar-e=0}, but it is more complicated.
 
 \begin{lemma}\label{lemma:eball-c}
Assume that assumptions of Theorem~\ref{Th1} hold. For every $e\in \mathbb{S}^{N-1}$ with a projection path $P(e,\R^+\mathcal{U}(U)\cup \{0\})$ such that $w_u(\widehat{\xi})>w_v(\widehat{\xi})$ for all $\xi\in P(e,\R^+\mathcal{U}(U)\cup \{0\})\setminus\{0\}$ and
$$\sup_{\xi\in P(e,\R^+\mathcal{U}(U)\cup \{0\})\setminus\{0\}}\frac{w_v(\widehat{\xi})\sqrt{1-(\widehat{\xi}\cdot \widehat{\overline{e}})^2}}{\sqrt{1-|\overline{e}|^2}}<c<w_u(e),$$ 
where $\overline{e}$ is the projection of $e$ on $\R^+\mathcal{U}(U)\cup \{0\}$, with the convention that $\hat{\overline{e}}=0$ if $\overline{e}=0$,
there is $\varepsilon>0$ such that 
\be\label{eq:eball-u}
\lim_{t\to +\infty}\sup_{x\in B_{\varepsilon}(ce)}\left \{ |u(t, xt)-1| + |v(t, xt)|\right \} =0.
\ee
\end{lemma}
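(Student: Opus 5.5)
The plan is to follow the scheme of Lemma~\ref{lemma:bar-e=0}, now along a broken path. The species $u$ first sits at $v$-free points near $\lambda\overline{e}$, deep inside $t\mathcal{W}_{uv}$. From there it travels with speed below $c_u$ along the segment from $\lambda\overline{e}$ to $ce$, which is parallel to the projection path. If $\overline{e}=0$, Lemma~\ref{lemma:bar-e=0} already gives the result, since then $w_u(e)=c_u$. So I assume $\overline{e}\neq 0$, that is, $\widehat{\overline{e}}\in\mathcal{U}(U)$. Set $\theta$ by $|\overline{e}|=\cos\theta$, so that $\overline{e}=(e\cdot\widehat{\overline{e}})\widehat{\overline{e}}$.

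\textbf{Step 1 (geometry and smallness of $v$ along the path).}
Write the straight path from $\lambda_0 c\,\overline{e}$ to $ce$, with $\lambda_0\in(0,1]$ to be chosen, as the points $\zeta(\tau)=c\big((1-\tau)\overline{e}+\tau e\big)$. All of these lie in $\R^+ P(e,\R^+\mathcal{U}(U)\cup\{0\})$ up to scaling along rays, and the directions $\widehat{\zeta(\tau)}$ range over $\{\widehat{\xi}:\xi\in P\setminus\{0\}\}$.

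The lower bound on $c$ in the hypothesis is designed so that $|\zeta(\tau)|>w_v(\widehat{\zeta(\tau)})$ for every $\tau\in(0,1]$. Indeed, $|\zeta|\sqrt{1-(\widehat{\zeta}\cdot\widehat{\overline{e}})^2}$ equals the distance from $\zeta$ to the line $\R\widehat{\overline{e}}$, which is $\tau c\sqrt{1-|\overline{e}|^2}$. Hence the points of the segment are outside $\overline{\mathcal{W}_v}$ for $\tau$ bounded away from $0$, with a uniform margin coming from the strict inequality and compactness. Near $\tau=0$ the points are near $c\overline{e}$, and I will instead use $\mathcal{W}_{uv}$ via Lemma~\ref{sycuv}, because $c\overline{e}\in\R^+\mathcal{U}(U)\subset\mathcal{W}_{uv}$ and $v\to0$ there.

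I then need uniform smallness of $v$ on a tube $Q_s$ around the moving points: the analogue of \eqref{v-Qs}. For this I would invoke Lemma~\ref{uv1} on a closed set of directions in $\mathcal{B}(V)$; note $w_v(\widehat\zeta)<\infty$ forces $\widehat{\zeta}\in\mathcal{B}(V)$. I combine it with Lemma~\ref{uv0} on a compact set in $\R^N\setminus\overline{\mathcal{W}_v}$ covering the bounded part, and with Lemma~\ref{sycuv} near the starting point.

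\textbf{Step 2 (launching and transporting $u$).}
At time $\approx\lambda_0 t$, rescaled as in Lemma~\ref{lemma:bar-e=0} through a time shift $T$, Lemma~\ref{sycuv} gives $u\ge 1-\delta$ on a ball of radius $\kappa\varepsilon t$ around $\lambda_0 t\,c\,\overline{e}$. Starting from there, I use the sub-solutions $\underline{u}_{\hat c,e'}$ of Lemma~\ref{Rsigma}. Here $e'$ is close to the direction $\widehat{e-\overline{e}}$, and the speed is
$$\hat c\approx \frac{c\sqrt{1-|\overline{e}|^2}}{1-\lambda_0}\,\frac{1}{\,\cdot\,}.$$
More precisely, I choose $\lambda_0$ so that the segment length $c|e-\lambda_0\overline{e}|$ over the remaining time $(1-\lambda_0)$ is below $c_u$. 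This is possible exactly because $c<w_u(e)=c_u/\sqrt{1-|\overline e|^2}$: taking $\lambda_0$ so that the displacement $ce-\lambda_0 c\overline{e}$ is traversed at a speed $c'<c_u$ works when $\lambda_0$ is near the optimal value $1-c\sqrt{1-|\overline{e}|^2}/c_u\cdot(\dots)$. If this optimizer lands at $\lambda_0\le 0$, I simply start from a point of $t\mathcal{W}_{uv}$ on the segment.

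The condition \eqref{con-v} holds along the tube by Step 1. The comparison principle then yields $u\ge\delta'$ on a ball of radius $\kappa\varepsilon t$ around $cte$ at times close to $t$. The Dirichlet-eigenfunction holding argument, as in \eqref{initial-s2}, keeps $u\ge\eta'$ up to time $t$.

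\textbf{Step 3 (upgrade to $1$).}
Exactly as in \eqref{e-Bce}, a contradiction and compactness argument with a limiting supersolution of the KPP equation with reaction $1-a\mu$ gives $u\to1$ on $B_{\kappa\varepsilon t}(cte)$. Meanwhile $v\to0$ on $tB_\varepsilon(ce)$ by Lemma~\ref{uv0}, since $c>w_v(e)$; this follows from the hypothesis at $\tau=1$. Together these yield \eqref{eq:eball-u}.

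The main obstacle is Step 1 combined with the choice in Step 2. I must check that the tube swept by the sub-solution, including its widening $\kappa\varepsilon s$ and the perturbed directions $e'$, stays uniformly inside the region where $v$ is small. This requires a single compactness cover mixing three sources: Lemma~\ref{sycuv} near $\R^+\mathcal{U}(U)$, Lemma~\ref{uv1} for large scales, and Lemma~\ref{uv0} elsewhere. The transition region between $\mathcal{W}_{uv}$ and $\R^N\setminus\overline{\mathcal{W}_v}$ is the delicate point. There I would use that $\mathcal{W}_{uv}$ contains a $c_{uv}$-neighbourhood of $\R^+\mathcal{U}(U)$, and shrink $\lambda_0$ and $\varepsilon$ until the strict margin from the hypothesis absorbs the gap.
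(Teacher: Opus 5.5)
There is a genuine gap, and it comes from the launch point that your Steps~1 and~2 share.

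\textbf{Speed.} You start the bump at time $\lambda_0 t$ near $\lambda_0 t\,c\overline{e}$ and need it at $cte$ at time $t$. Since $\overline{e}\perp(e-\overline{e})$, the required speed is
\[
\frac{c\,|e-\lambda_0\overline{e}|}{1-\lambda_0}=c\sqrt{|\overline{e}|^2+\frac{1-|\overline{e}|^2}{(1-\lambda_0)^2}}\ \ge\ c .
\]
So Lemma~\ref{Rsigma} is unusable for every $c\in[c_u,w_u(e))$. This range is nonempty, because $w_u(e)=c_u/\sqrt{1-|\overline{e}|^2}>c_u$ when $\overline{e}\neq0$. Your ``optimal'' value $\lambda_0\approx 1-c\sqrt{1-|\overline{e}|^2}/c_u$ is the right computation only for a bump that starts at the foot point $ct\overline{e}$ itself and moves perpendicular to $\overline{e}$.

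\textbf{Region where $v$ must be small.} A linear motion from $\lambda_0 t\,p$ at time $\lambda_0 t$ to $tq$ at time $t$ has rescaled positions $x(s)/s$ running exactly along the segment $[p,q]$. With $p=c\overline{e}$ this is your segment $\zeta$ for every $\lambda_0$, so shrinking $\lambda_0$ changes nothing. Moreover, $|\zeta(\tau)|>w_v(\widehat{\zeta(\tau)})$ does not follow from the hypothesis for $\tau<1$. The hypothesis only says that the $v$-front in direction $\widehat{\xi}$ lies at distance $<c\sqrt{1-|\overline{e}|^2}$ from the axis $\R\widehat{\overline{e}}$, whereas $\zeta(\tau)$ is at distance $\tau c\sqrt{1-|\overline{e}|^2}$. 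For instance, take $V$ compact (so $\mathcal{W}_v=B_{c_v}$), $\mathcal{U}(U)=\{\widehat{\overline{e}}\}$, $c_u>c_v$, and $c\in(c_v,c_u)$ with $c^2|\overline{e}|^2+c_{uv}^2<c_v^2$. All hypotheses of the lemma hold, yet part of the segment lies in $B_{c_v}\setminus\overline{\mathcal{W}_{uv}}$. There Lemmas~\ref{uv0}, \ref{uv1} and \ref{sycuv} give no smallness of $v$, and no margin in the hypothesis absorbs this region.

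\textbf{The missing idea} is to launch early and far out on the axis. The paper starts at time $\varepsilon t$ from a point $ct\overline{e}+y_t\in U_\rho$ with $|y_t|\le\varepsilon^2 t/2$. Such a point exists because $\widehat{\overline{e}}\in\mathcal{U}(U_\rho)$, and $u\approx1$ there by \eqref{Bcuv}. The bump then moves in direction $\approx\widehat{e-\overline{e}}$ at speed $\approx c'\sqrt{1-|\overline{e}|^2}<c_u$, where $c<c'<w_u(e)$. It reaches $cte$ at time $\approx(\varepsilon+c/c')t<t$, and $\eta\Phi_{R/2}$ holds it there until time $t$.

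In rescaled variables this path runs from $c\overline{e}/\varepsilon$ to a point beyond $ce$. It stays in the region where $v$ is small uniformly over the directions $\widehat{\xi}$, $\xi$ on the projection path; these directions all lie in $\mathcal{B}(V)$, so Lemma~\ref{uv1} applies. For $s\le M'\varepsilon t$, smallness follows from the axial distance $ct|\overline{e}|\gg s\,w_v$, using $w_v(\widehat{\overline{e}})<\infty$. Afterwards, the bump's perpendicular coordinate grows like $c'\sqrt{1-|\overline{e}|^2}(s-\varepsilon t)$. This outruns $s\sup_{\xi}w_v(\widehat{\xi})\sqrt{1-(\widehat{\xi}\cdot\widehat{\overline{e}})^2}<sc\sqrt{1-|\overline{e}|^2}$. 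This is exactly where the lower bound on $c$ and the gap $c'>c$ enter, and $\mathcal{W}_{uv}$ is never needed for smallness of $v$.

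You also omit the case $e\in\mathcal{U}(U)$. There the hypothesis degenerates, and one concludes directly from $\mathcal{W}_{uv}$.
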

 
 \begin{proof}
 If $\mathcal{U}(U)\neq \emptyset$ and $e\in \mathcal{U}(U)$, then $w_u(e)=w_{uv}(e)=+\infty$. For any $0\le c<+\infty$,there is $\varepsilon>0$ such that $B_{\varepsilon}(ce)\subset \W_{uv}$ by the definition of $\W_{uv}$. Then, \eqref{eq:eball-u} follows from Lemma~\ref{sycuv}.
 
 If $e\not\in\mathcal{U}(U)$, let $\overline{e}\in \R^+\mathcal{U}(U)\cup \{0\}$ be the projection of $e$ on $\R^+\mathcal{U}(U)\cup \{0\}$. Two cases may occur: (i) $\overline{e}=0$; (ii) $\overline{e}\neq 0$.
 
{\rm Case (i): $\overline{e}=0$.} The projection path $ P(e,\R^+\mathcal{U}(U)\cup \{0\})$ is the line connecting $0$ and $e$. So, $\hat{\xi}=e$ for all $\xi\in   P(e,\R^+\mathcal{U}(U)\cup \{0\})\setminus\{0\}$. By the definition of the projection path and since $\R^+\mathcal{U}(U)\cup \{0\}$ is a cone, either $\mathcal{U}(U)=\emptyset$ or $e\cdot \nu\le 0$ for all $\nu\in \mathcal{U}(U)$. By the definition of $w_u(e)$, one has $w_u(e)=c_u$. Thus, the condition of Lemma~\ref{lemma:eball-c} is equivalent to $w_v(\hat{\xi})=w_v(e)<w_u(e)=c_u$ for all $\xi\in   P(e,\R^+\mathcal{U}(U)\cup \{0\})\setminus \{0\}$. In this case, the conclusion is proved by Lemma~\ref{lemma:bar-e=0}.
 
{\rm Case (2): $\overline{e}\neq 0$.} 
In this case, the set $\mathcal{U}(U)$ is automatically not empty since $\overline{e}\in \R^+\mathcal{U}(U)$. In the poof of Lemma~\ref{sycuv}, we have shown that $\mathcal{U}(U)=\mathcal{U}(U_{\rho})$. It means that $\overline{e}\in \R^+\mathcal{U}(U_{\rho})$ as well. Let 
$$\mathcal{B}_1:=\{\hat{\xi}: \xi\in P(e,\R^+\mathcal{U}(U)\cup\{0\})\}.$$ 
By the definition of projection, one has that $\hat{\xi}\neq 0$, $\widehat{\xi} \in \mathbb{S}^{N-1}\setminus\mathcal{U}(U)=\mathcal{B}(U)$ for $\hat{\xi} \in \mathcal{B}_1\setminus\{\hat{\overline{e}}\}$ (remember that $\hat{\overline{e}}=\overline{e}/|\overline{e}|$)  
and $|e-\overline{e}|= \hbox{dist}(e,\R^+\mathcal{U}(U))= \inf_{\xi\in\mathcal{U}(U),\tau>0}|e-\tau\xi|$. By taking $\tau=|\overline{e}|$, one can show by simple computation that $e\cdot \widehat{\overline{e}}=|\overline{e}|\ge e\cdot \xi$ for all $\xi\in \mathcal{U}(U)$. Since $\widehat{\overline{e}}\in \mathcal{U}(U)$, this particularly implies 
\be\label{ov-e}
\sqrt{1-|\overline{e}|^2}=\sqrt{1-\Big(e\cdot \widehat{\overline{e}}\Big)^2}=\inf_{\nu\in\mathcal{U}(U),\nu\cdot e\ge 0}\sqrt{1-(e\cdot \nu)^2}.
\ee
Notice that $\overline{e}$ is also a projection of $\xi\in P(e,\R^+\mathcal{U}(U)\cup\{0\})\setminus\{\overline{e}\}$ on $\R^+\mathcal{U}(U)\cup \{0\}$ which also implies that $\xi\cdot \hat{\overline{e}}=|\overline{e}|$. Thus, as \eqref{ov-e}, one can show
\be\label{ov-e-xi}
\sqrt{1-\Big(\widehat{\xi}\cdot \widehat{\overline{e}}\Big)^2}=\inf_{\nu\in\mathcal{U}(U),\widehat{\xi}\cdot \nu\ge 0}\sqrt{1-(\widehat{\xi}\cdot \nu)^2} \quad \hbox{for every $\hat{\xi}\in \mathcal{B}_1\setminus\{\hat{\overline{e}}\}$}.
\ee
Then, by the definition of $w_u$, one gets 
\be\label{wuecu} 
w_u(\widehat{\xi})\sqrt{1-\Big(\widehat{\xi}\cdot \widehat{\overline{e}}\Big)^2}=c_u ,\,\hbox{ for every $\hat{\xi}\in \mathcal{B}_1\setminus\{\hat{\overline{e}}\}$},
\ee
and meantime, $w_u(\hat{\overline{e}})=+\infty$.

We claim that
\be\label{Path-lesswu}
\sup_{\hat{\xi}\in \mathcal{B}_1} w_v(\widehat{\xi})\sqrt{1-(\widehat{\xi}\cdot \widehat{\overline{e}})^2}<c_u.
\ee
Otherwise, since $\mathcal{B}_1$ is closed and by the continuity of $w_v(e)$, there is $\hat{\xi}_0\in \mathcal{B}_1$ such that 
$$w_v(\widehat{\xi_0})\sqrt{1-(\widehat{\xi_0}\cdot \widehat{\overline{e}})^2}\ge c_u.$$
If $\hat{\xi}_0=\hat{\overline{e}}$, then $0\ge c_u$ which is impossible. If $\hat{\xi}_0\neq \hat{\overline{e}}$, it follows from \eqref{wuecu} that 
$$c_u=w_u(\widehat{\xi_0})\sqrt{1-(\widehat{\xi_0}\cdot \widehat{\overline{e}})^2}\le w_v(\widehat{\xi_0})\sqrt{1-(\widehat{\xi_0}\cdot \widehat{\overline{e}})^2},$$
that is, $w_v(\widehat{\xi_0})\ge w_u(\widehat{\xi_0})$ which contradicts the condition of this lemma.

By \eqref{Path-lesswu}, one can take 
\be\label{eq:c<c'}
\sup_{\hat{\xi}\in \mathcal{B}_1}\frac{w_v(\widehat{\xi})\sqrt{1-(\widehat{\xi}\cdot \widehat{\overline{e}})^2}}{\sqrt{1-|\overline{e}|^2}}<c<c'<w_u(e).
\ee
Then,
$$\sup_{\hat{\xi}\in \mathcal{B}_1}w_v(\widehat{\xi})\sqrt{1-(\widehat{\xi}\cdot \widehat{\overline{e}})^2}< c\sqrt{1-|\overline{e}|^2}<c' \sqrt{1-|\overline{e}|^2}<w_u(e) \sqrt{1-|\overline{e}|^2}=c_u.$$
Take
\be\label{epsilon-2}
\begin{aligned}
0<\varepsilon<\min\Big(&1,\frac{1}{2}\sup_{\hat{\xi}\in \mathcal{B}_1}w_v(\widehat{\xi})\sqrt{1-(\widehat{\xi}\cdot \widehat{\overline{e}})^2},\frac{1}{3}(c\sqrt{1-|\overline{e}|^2}-\sup_{\hat{\xi}\in \mathcal{B}_1}w_v(\widehat{\xi})\sqrt{1-(\widehat{\xi}\cdot \widehat{\overline{e}})^2}),\\
&\frac{1}{12}(c'-c)(1-|\overline{e}|^2),\frac{c(c'-c)}{c'+3c}(1-|\overline{e}|^2),\frac{1}{3}(c_u-c'\sqrt{1-|\overline{e}|^2}),\\
&\frac{c|\overline{e}|}{(\sup_{\hat{\xi}\in \mathcal{B}_1}w_v(\widehat{\xi})+c_u)M'+1},\frac{c'-c}{2(c'+c)}\Big),
\end{aligned}
\ee
where
\be\label{eq:M'}
M'=\frac{2c+2c'+4}{c'-c}.
\ee
Since $\partial\W_v$ is continuous, even if it means decreasing $\varepsilon$, assume that $\overline{B_{\varepsilon}(ce)}\subset \R^N\setminus \overline{\W_v}$. By Lemma~\eqref{uv0}, one has 
\be\label{Bev0-2}
\lim_{t\rightarrow +\infty} \sup_{x\in B_{\varepsilon}(ce)} |v(t,xt)|=0
\ee

The condition that $w_v(\widehat{\xi})<w_u(\widehat{\xi})\le +\infty$ for all $\hat{\xi}\in \mathcal{B}_1$ implies $\widehat{\xi}\in \mathcal{B}(V)$ for all $\hat{\xi}\in \mathcal{B}_1$, that is, $\mathcal{B}_1\subset \mathcal{B}(V)$.
For $s\in [T, +\infty)$ with $T>0$, consider the set
$$\widetilde{Q}_s:=\underset{e\in \mathcal{B}_1}{ \bigcup} Q_s(e):= \underset{e\in \mathcal{B}_1}{ \bigcup}\, \underset{\hat{c}\ge w_v(e)+3\varepsilon}{ \bigcup} B_{\varepsilon s}\Big( 2MT w_v(e) e  +\hat{c} (s-T) e\Big),$$
where $M$ is sufficiently large such that $\hat{c}(1-\frac{1}{M})>w_v(e)+2\varepsilon$ for any $\hat{c}\ge w_v(e)+3\varepsilon$ and $e\in\mathcal{B}_1$.
As shown in the proof of Lemma~\ref{lemma:bar-e=0}, the set $\frac{1}{s}Q_s(e)$ is included in the $\varepsilon$-neighborhood of the ray $\{\tau (w_v(e)+2\varepsilon)e:\, \tau>1\}$ for every $e\in \mathcal{B}_1$ and $s\ge T$. Thus, there is a compact set  $C\subset \R^N\setminus \overline{\W_{v}}$ such that $\frac{1}{s}\widetilde{Q}_s \subset C\cup (\cup_{e\in\mathcal{B}_1} \mathcal{C}_v^{\varepsilon}(e))$ where $\mathcal{C}_v^{\varepsilon}(e)$ is defined in Lemma~\ref{uv0}, for all $s\in  [T, +\infty)$. By Lemma~\ref{uv1}, for any $\mu>0$, there is $T>0$  such that
\be\label{2-v-Qs} 
v(s,x)\le \mu, \quad \hbox{for $s\in  [T,+\infty)$ and $x\in \widetilde{Q}_s$}.
\ee

Since $\widehat{\overline{e}}\in \mathcal{U}(U)=\mathcal{U}(U_{\rho})$, it means that 
$$\lim_{t\rightarrow +\infty}\frac{\hbox{dist}(ct\overline{e},U_{\rho})}{ct|\overline{e}|}\rightarrow 0.$$
Thus, there are $y_t\in \R^N$ and $T_1>0$ such that $|y_t|\le  \varepsilon^2  t/2$ and
$$c t \overline{e} + y_t\in U_{\rho}, \hbox{ for all $t\ge T_1$}.$$
Let
$$\tilde{c}=\left|c(e-\overline{e})-\frac{y_t}{t}\right|,\, \tilde{c}'=\left|c'(e-\overline{e})-\frac{y_t}{t}\right|,\, \nu=\frac{ct(e-\overline{e})-y_t}{|ct(e-\overline{e})-y_t|}.$$
Here, although $\tilde{c}$, $\tilde{c}'$ and $\nu$ are depending on $t$, we drop $t$ for simplicity since we will fix $t$.
Then, by $|e-\overline{e}|=\sqrt{1-|\overline{e}|^2}$, 
$$c \sqrt{1-|\overline{e}|^2}-\varepsilon^2<\tilde{c}<c  \sqrt{1-|\overline{e}|^2}+\varepsilon^2 < c' \sqrt{1-|\overline{e}|^2}-\varepsilon^2<\tilde{c}'<c' \sqrt{1-|\overline{e}|^2}+\varepsilon^2<c_u-2\varepsilon,$$
for any $t\ge T_1$. By \eqref{epsilon-2}, one particularly has
\be\label{eq:c'+c}
\frac{c\tilde{c}'}{\tilde{c}}>\frac{c'+c}{2}. 
\ee
Let 
\be\label{2-kappa} 
\kappa:=\frac{\tilde{c}}{4\tilde{c}'}.
\ee
Then, $0<\tilde{c}'-\varepsilon/4<\tilde{c}'-\tilde{c}'\kappa \varepsilon<\tilde{c}'+\tilde{c}'\kappa\varepsilon/\tilde{c}\le \tilde{c}'+\varepsilon<c_u-\varepsilon$.
Let $R>0$ and $\sigma_0>0$ such that Lemma~\ref{Rsigma} holds for $C=B_{\tilde{c}'\kappa\varepsilon/\tilde{c}}(\tilde{c}' \nu)\subset B_{c_u}\setminus \{0\}$. Let $T>0$ such that $\varepsilon T\ge 4R$ and \eqref{2-v-Qs} holds for 
$$\mu\le \min\Big(\frac{1}{4a}\Big(1-\frac{(c_u-\varepsilon)^2}{c_u^2}\Big),\frac{1}{2a}\Big).$$
Notice that $|\xi|\le c_u-\varepsilon$ for any $\xi\in C$.

Fix any $t\ge T_1$ large enough such that
\be\label{eq:chosen-t}
\varepsilon t\ge T \hbox{ and }\varepsilon t (1-|\overline{e}|^2)>2MT\sup_{\hat{\xi}\in \mathcal{B}_1}w_v(\widehat{\xi}).
\ee
We prove that for $s\in [\varepsilon t, \tilde{c}t/\tilde{c}'+\varepsilon t]$
\be\label{BallinQs} 
\begin{aligned}
&B_{\frac{\tilde{c}'\kappa\varepsilon}{\tilde{c}} (s-\varepsilon t)+R}(ct\overline{e}+y_t  +\tilde{c}' (s-\varepsilon t) \nu)\\
=&B_{\frac{\tilde{c}'\kappa\varepsilon}{\tilde{c}} (s-\varepsilon t)+R}(ct\overline{e} +\tilde{c}' (s-\varepsilon t) \frac{ct(e-\overline{e})}{|ct(e-\overline{e})-y_t|})+y_t -\frac{\tilde{c}' (s-\varepsilon t) }{|ct(e-\overline{e})-y_t|}y_t \subset \widetilde{Q}_s.
\end{aligned}
\ee
Notice that 
$$\left|y_t -\frac{\tilde{c}' (s-\varepsilon t) }{|ct(e-\overline{e})-y_t|}y_t\right|=\left|1-\frac{\tilde{c}'(s-\varepsilon t)}{\tilde{c}t}\right| |y_t|\le |y_t|\le \frac{\varepsilon^2 t}{2}\le \frac{\varepsilon s}{2}.$$
and
\be\label{h-epsilon}
\frac{1}{2}\varepsilon s- \Big[\frac{\tilde{c}'\kappa\varepsilon}{\tilde{c}} (s-\varepsilon t)+R\Big]\ge \frac{1}{4}\varepsilon s -R\ge \frac{1}{4}\varepsilon T -R>0.
\ee
On the other hand, one has
$$\xi_s:=ct\overline{e} +\tilde{c}' (s-\varepsilon t) \frac{ct(e-\overline{e})}{|ct(e-\overline{e})-y_t|}=ct\left[ \Big(1-\frac{\tilde{c}'(s-\varepsilon t)}{|ct(e-\overline{e})-y_t|}\Big)\overline{e}+ \frac{\tilde{c}'(s-\varepsilon t)}{|ct(e-\overline{e})-y_t|} e\right].$$
Since 
$$0\le \frac{\tilde{c}'(s-\varepsilon t)}{|ct(e-\overline{e})-y_t|}=\frac{\tilde{c}'(s-\varepsilon t)}{\tilde{c}t}\le 1, \hbox{ for } s\in [\varepsilon t, \tilde{c}t/\tilde{c}'+\varepsilon t],$$
one has $\hat{\xi_s}\in\mathcal{B}_1$. Then, we only have to show 
\be\label{xis}
|\xi_s|\ge 2MT w_v(\hat{\xi_s})  + (w_v(\hat{\xi_s}) +3\varepsilon)  (s-T).
\ee
For $\varepsilon t\le s\le M'\varepsilon t$, one has 
$$|\xi_s|=\left|ct\overline{e} +\tilde{c}' (s-\varepsilon t) \frac{ct(e-\overline{e})}{|ct(e-\overline{e})-y_t|}\right|\ge ct|\overline{e}|,$$
by $\overline{e}\cdot (e-\overline{e})=0$,
and
\be
\begin{aligned} 
2MT w_v(\hat{\xi_s})  + (w_v(\hat{\xi_s}) +3\varepsilon)  (s-T)\le& \varepsilon t  + (w_v(\hat{\xi_s}) +3\varepsilon)  (M'\varepsilon t-T)\\
\le& \Big( (w_v(\hat{\xi_s}) +3\varepsilon) M' +1\Big)\varepsilon t\\
\le& \Big( (w_v(\hat{\xi_s}) +c_u) M' +1\Big)\varepsilon t\\
\le& c|\overline{e}| t,
\end{aligned}
\ee
by \eqref{epsilon-2}.
For $s\in [M'\varepsilon t,\tilde{c}t/\tilde{c}'+\varepsilon t]$, one has that
\be
\begin{aligned}
\xi_s\cdot (e-\overline{e})=&\frac{ct \tilde{c}'(s-\varepsilon t)}{|ct(e-\overline{e})-y_t|} (1-|\overline{e}|^2)=&\frac{c\tilde{c}'}{\tilde{c}}(s-\varepsilon t)(1-|\overline{e}|^2)\\
>&\frac{c'+c}{2}(s-\varepsilon t)(1-|\overline{e}|^2),
\end{aligned}
\ee
by \eqref{eq:c'+c},
and
\be
\begin{aligned} 
&\Big[2MT w_v(\hat{\xi_s})  + (w_v(\hat{\xi_s}) +3\varepsilon)  (s-T)\Big] \hat{\xi_s}\cdot (e-\overline{e})\\
=&\Big[2MT w_v(\hat{\xi_s})  + (w_v(\hat{\xi_s}) +3\varepsilon)  (s-T)\Big]\sqrt{1-|\overline{e}|^2} \sqrt{1-(\hat{\xi_s}\cdot \hat{\overline{e}})^2}\\
<& (c(1-|\overline{e}|^2)+3\varepsilon)s + (1-|\overline{e}|^2)\varepsilon t,
\end{aligned}
\ee
by \eqref{eq:c<c'} and \eqref{eq:chosen-t}.
In fact,
\be
\begin{aligned}
& \frac{c'+c}{2} (1-|\overline{e}|^2)(s-\varepsilon t)-(c(1-|\overline{e}|^2)+3\varepsilon)s - (1-|\overline{e}|^2)\varepsilon t\\
\ge& \frac{1}{4}(c'-c)(1-|\overline{e}|^2) s -\frac{c'+c}{2} (1-|\overline{e}|^2)   \varepsilon t- (1-|\overline{e}|^2)\varepsilon t\\
\ge& \frac{1}{4}(c'-c)(1-|\overline{e}|^2) M' \varepsilon t -\frac{c'+c}{2} (1-|\overline{e}|^2)   \varepsilon t- (1-|\overline{e}|^2)\varepsilon t=0,
\end{aligned}
\ee
by \eqref{eq:M'}.
In conclusion,
\eqref{xis} is proved and so \eqref{BallinQs} holds.
Thus, it follows from \eqref{2-v-Qs} that
\be\label{2-v-Qs2}
v(s,x+ct\overline{e}+y_t)\le \mu,\quad  \hbox{ for $s\in [\varepsilon t, \tilde{c}t/\tilde{c}']$ and $x\in C (s-\varepsilon t)+B_R$}.
\ee 
where $C=B_{\tilde{c}'\kappa\varepsilon/\tilde{c}}(\tilde{c}' \nu)$.

On the other hand, since $c t \overline{e} + y_t\in U_{\rho}$, it follows from \eqref{Bcuv} and Lemma~\ref{delta-uv} that there is $\delta>0$ such that
$$u(\varepsilon t,x)\ge \delta,\, \hbox{ for all $x\in B_{R}(ct \overline{e}+y_t)$ and $t\ge T/\varepsilon$}.$$
Take $\sigma\in (0,\sigma_0]$ such that $\sigma e^{\frac{(c_u-\varepsilon) R}{2d}}\le \delta$,  which makes 
$$u(\varepsilon t,x+ct \overline{e}+y_t)\ge \underline{u}_{\hat{c},e'}(0,x), \hbox{ for all $x\in\R^N$},$$
for every $\hat{c}e'\in C=B_{\tilde{c}'\kappa\varepsilon/\tilde{c}}(\tilde{c}'  \nu)$.
By \eqref{2-v-Qs2}, Lemma~\ref{Rsigma} and the comparison principle, it leads to that for any $\hat{c}e'\in C=B_{\tilde{c}'\kappa\varepsilon/\tilde{c}}(\tilde{c}'  \nu)$, it holds
$$u(s+\varepsilon t,x+ct \overline{e}+y_t)\ge\underline{u}_{\hat{c},e'}(s ,x) =\sigma e^{-\frac{\hat{c}}{2d}(x\cdot e'  - \hat{c}s) }\Phi_{R}\left( x -\hat{c} s e'  \right),$$
 for $s\in [\varepsilon t, \tilde{c}t/\tilde{c}']$.  By taking $s= \tilde{c}t/\tilde{c}'$, there is $\delta'>0$ such that
\be\label{2-initial-s} 
u(\frac{\tilde{c}t}{\tilde{c}'}+\varepsilon t,x+ct \overline{e}+y_t)\ge \delta', \hbox{ for $x\in B_{R/2}(\frac{\hat{c}\tilde{c} t}{\tilde{c}'}e')$},
\ee
 with any $\hat{c}e'\in C=B_{\tilde{c}'\kappa\varepsilon/\tilde{c}}(\tilde{c}'  \nu)$. Notice that
 \be
\begin{aligned} 
\underset{\hat{c}e'\in B_{\tilde{c}'\kappa\varepsilon/\tilde{c}}(\tilde{c}'  \nu)}{\bigcup} B_{\frac{R}{2}}(\frac{\hat{c}\tilde{c}t}{\tilde{c}'}e') =&  \underset{\hat{c}e'\in B_{\tilde{c}'\kappa\varepsilon/\tilde{c}}(\tilde{c}'  \nu)}{\bigcup} \left(\frac{\tilde{c}t}{\tilde{c}'}\hat{c}e' +B_{\frac{R}{2}}\right)\\
=& \frac{\tilde{c}t}{\tilde{c}'} B_{\tilde{c}'\kappa\varepsilon/\tilde{c}}(\tilde{c}'  \nu) +B_{\frac{R}{2}} \\
=& B_{\kappa\varepsilon t +\frac{R}{2}}(\tilde{c} t\nu).
\end{aligned}
\ee
Then, by the definition of $\nu$, one has that $\tilde{c} t \nu +ct \overline{e} +y_t=cte$ and \eqref{2-initial-s} is equivalent to
\be\label{2-initial-se}
u(\frac{\tilde{c}t}{\tilde{c}'}+\varepsilon t,x)\ge \delta', \hbox{ for large $t$ satisfying \eqref{eq:chosen-t} and $x\in B_{\kappa\varepsilon t +\frac{R}{2}}(cte)$},
\ee

For $s\in [\tilde{c}t/\tilde{c}' +\varepsilon t, t]$  and  any $x_0\in B_{\kappa\varepsilon t}(cte)$, define
$$\underline{u}(s,x)=\eta\Phi_{R/2}(x-x_0).$$
with $\eta=\min(\delta',\frac{1}{2})$.
Then, by \eqref{2-initial-se},
$$\underline{u}(\frac{\tilde{c}t}{\tilde{c}'},x)\le \eta\le u(\frac{\tilde{c}t}{\tilde{c}'},x),  \hbox{ for $x\in B_{R/2}(x_0)$},$$
with any $x_0\in B_{\kappa\varepsilon t}(cte)$.
One can check that
\begin{eqnarray*}
\begin{array}{lll}
 N[\underline{u},v]&:=&(\underline{u})_s-d\Delta\underline{u}-r\underline{u}(1-\underline{u}-av)\\
&=&    -d \eta \Delta\Phi_{R/2} 
   -r \eta \Phi_{R/2} (1-\eta\Phi_{R/2} -a v) \\
&\le &   -\eta\Phi_{R/2} (r+d\lambda_{R/2}-r\eta^2-a r v) .
\end{array}
\end{eqnarray*}
For $x\not\in B_{R/2}(x_0)$, it is clear that $N[\underline{u},v]=0$. On the other hand, it is clear that $\widetilde{Q}_s$ contains the $\varepsilon s$-neighborhood of the ray $\{2MT w_v(e) e  +\hat{c} (s-T) e:\, \hat{c}\ge w_v(e)+3\varepsilon\}$ for $s\ge T$ and $\varepsilon T\ge 4R$. For  $s\in [\tilde{c}t/\tilde{c}' +\varepsilon t, t]$, one has that $\kappa\varepsilon t\le \varepsilon s/2$ by \eqref{2-kappa}.  Since $c>w_v(e)+3\varepsilon$, one can make $t$ even larger such that $ct>2MT w_v(e) e  +(w_v(e)+3\varepsilon)(s-T)$. This implies that $B_{\kappa\varepsilon t +\varepsilon s/2}(cte)\subset Q_s$ and by \eqref{2-v-Qs}, one has 
\be\label{2-v-mu}
v(s,x)\le \mu\le \frac{1}{2a}, \hbox{ for $x\in B_{\varepsilon s/2}(x_0)$ and  $s\in [\tilde{c}t/\tilde{c}' +\varepsilon t, t]$},
\ee
with any $x_0\in B_{\kappa \varepsilon t}(cte)$.
Notice that $\varepsilon s\ge R$ for $s\ge T$ which means $B_{R/2}(x_0)\subset B_{\varepsilon s/2}(x_0)$.
By \eqref{2-v-mu}, the definition of $\eta$ and $\lambda_{R}\rightarrow 0$ as $R\rightarrow +\infty$, even if it means increasing $R$,  one can make $r+d\lambda_{R/2}-r\eta^2-a r v>0$ and so $N[\underline{u},v]<0$ for $x\in B_{R/2}(x_0)$ and  $s\in [\tilde{c}t/\tilde{c}' +\varepsilon t, t]$, with any $x_0\in B_{\kappa \varepsilon t}(cte)$. Therefore, $\underline{u}(s,x)$ is a subsolution of $u$ and hence $u(s,x)\ge \eta\Phi_{R/2}(x-x_0)$ for  $s\in [\tilde{c}t/\tilde{c}' +\varepsilon t, t]$. In conclusion, there is $\eta'>0$ such that for all sufficiently large $t$,
\be\label{2-initial-s2}
u(s,x)\ge \eta', \hbox{ for  $s\in [\tilde{c}t/\tilde{c}' +\varepsilon t, t]$ and $x\in B_{\kappa \varepsilon t+R/4}(cte)$}.
\ee

We claim that 
\be\label{2-e-Bce}
\inf_{x\in B_{\kappa\varepsilon t}(cte)} u(t,x)\rightarrow 1, \hbox{ as $t\rightarrow +\infty$}.
\ee
Assume by contradiction that there are sequences $\{t_n\}_{n\in\mathbb{N}}$ and $\{x_n\}_{n\in\mathbb{N}}$ such that $x_n\in  B_{\kappa\varepsilon t_n}(ct_n e)$, $t_n\rightarrow +\infty$ as $n\rightarrow +\infty$ and $u(t_n,x_n)\le 1-\eta''$ for some $\eta''\in (0,1)$. Let $u_n(t,x)=u(t+t_n,x+x_n)$ and so, $u_n(0,0)<1-\eta''$. By \eqref{2-initial-s2}, it follows that
$$u_n(t,x)\ge \eta', \hbox{ for $t\in [-t_n+\tilde{c}t_n/\tilde{c}' +\varepsilon t_n,0]$ and $x\in B_{R/4}$},$$
for large $n$. By \eqref{2-v-mu}, one has $v(t+t_n,x+x_n)\le \mu$ for $t\in [-t_n+\tilde{c}t_n/\tilde{c}' +\varepsilon t_n,0]$ and $x\in B_{\varepsilon s_n}$ where $s_n=\tilde{c}t_n/\tilde{c}' +\varepsilon t_n$.
 By the standard parabolic estimates, $u_n(t,x)$ converge up to extraction of a subsequence to $u_{\infty}(t,x)$ satisfying
$$(u_{\infty})_t-d\Delta u_{\infty}-r u_{\infty}(1-u_{\infty}-a\mu)\ge 0, \hbox{ for $t\in (-\infty,0]$ and $x\in\R^N$},$$
and
$$u_{\infty}(0,0)<1-\eta'',\quad u_{\infty}(t,x)\ge \eta' \hbox{ for $t\in (-\infty,0]$ and $x\in B_{R/4}$}.$$
By the classical results of Aronson and Weinberger \cite{Aro}, one knows that $u_{\infty}(t,x)\ge 1-a\mu$ for all $t\in (-\infty,0]$ and $x\in\R^N$. Since $\mu$ can be arbitrarily small, one reaches a contradiction by taking $\mu$ small such that $1-a\mu>1-\eta''$.

The conclusion of this lemma follows from \eqref{2-e-Bce} and \eqref{Bev0-2}.
\end{proof}


Now, we remove the restriction of $c$ in Lemma~\ref{lemma:eball-c}.

 \begin{lemma}\label{lemma:wu>wv}
Assume that assumptions of Theorem~\ref{Th1} hold. For every $e\in \mathbb{S}^{N-1}$ with a projection path $P(e,\R^+\mathcal{U}(U)\cup \{0\})$ such that $w_u(\widehat{\xi})>w_v(\widehat{\xi})$ for all $\xi\in P(e,\R^+\mathcal{U}(U)\cup \{0\})\setminus\{0\}$ and $0\le c<w_u(e)$,
there is $\varepsilon>0$ such that 
$$\lim_{t\to +\infty}\sup_{x\in  B_{\varepsilon}(ce)}\left \{ |u(t, xt)-1| + |v(t, xt)|\right \} =0.$$
\end{lemma}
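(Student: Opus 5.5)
Lemma~\ref{lemma:eball-c} already gives the conclusion for every $c$ in the window
$$c_*(e):=\sup_{\xi\in P(e,\R^+\mathcal{U}(U)\cup \{0\})\setminus\{0\}}\frac{w_v(\widehat{\xi})\sqrt{1-(\widehat{\xi}\cdot \widehat{\overline{e}})^2}}{\sqrt{1-|\overline{e}|^2}}<c<w_u(e),$$
so the plan is to fill the remaining range $0\le c\le c_*(e)$ by propagating the $u$-dominated region inward. The case $e\in\mathcal{U}(U)$ is immediate from Lemma~\ref{sycuv}, since then $B_\varepsilon(ce)\subset\W_{uv}$ for every $c\ge 0$. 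So assume $e\notin\mathcal{U}(U)$. Also, for $c<w_{uv}(e)$ we have $ce\in\W_{uv}$, and since $\W_{uv}$ is open we again conclude by Lemma~\ref{sycuv}. It therefore remains to treat $w_{uv}(e)\le c\le c_*(e)$.

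For such $c$, I will use the invasion mechanism of Lemma~\ref{lemma:sub}. Fix $c_0\in(c_*(e),w_u(e))$. By Lemma~\ref{lemma:eball-c} there is $\varepsilon_0>0$ such that $(u,v)(t,xt)\to(1,0)$ uniformly for $x\in B_{\varepsilon_0}(c_0e)$. Moreover, for every $c'\in[c_0,w_u(e))$ the same lemma gives such a ball around $c'e$. By compactness of the segment $\{c'e:\,c_0\le c'\le c_1\}$ with $c_1<w_u(e)$, I then get uniform convergence on a neighborhood of this whole segment.

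Next, I fix large $T$ and, for every time $s\in[\theta t, t]$ with $\theta$ close to $1$, I place translated copies of the subsolution pair $(\underline{u}_\rho,\overline{v}_\rho)(\cdot-\tau,\cdot-x_0)$ with centers $x_0$ in the $(1,0)$-region $B_{\varepsilon_0\tau}(c_0\tau e)$ at time $\tau$. For $\delta$ small, $u(\tau,\cdot)\ge 1-\delta$ and $v(\tau,\cdot)\le\delta$ there, so the comparison principle gives $u\ge\underline{u}_\rho$ and $v\le\overline{v}_\rho$. By Lemma~\ref{lemma:sub}, at time $t$ the region $B_{(c_{uv}-\varepsilon)(t-\tau)}(x_0)$ is close to $(1,0)$. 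Choosing $\tau=\lambda t$ with $\lambda\in(0,1]$, the union of these regions covers $B_{(c_{uv}-\varepsilon)(1-\lambda)t}(\lambda c_0te)$ for every $\lambda$. Varying $\lambda$, and using $c_{uv}>0$ from (A2), this sweeps out all points $cte$ with $c\in[\lambda c_0-(1-\lambda)c_{uv},\lambda c_0]$, together with a $t$-proportional ball around each. Letting $\lambda$ range over $(0,1]$ reaches every $c\in[0,c_0]$, since at $\lambda=c/c_0$ the point $cte$ is itself a center.

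The expected obstacle is uniformity. The $\sup$ over balls $B_{\varepsilon}(ce)$ requires a neighborhood of fixed relative size, and Lemma~\ref{lemma:sub} only gives convergence at rates uniform on $|x|\le(c_{uv}-\varepsilon)t$. The key point is that the covering ball $B_{\varepsilon_0\lambda t}(\lambda c_0te)$ at time $\lambda t$ contains $B_{\varepsilon_0\lambda t/2+\rho}$ around every center, and each center has its own copy of the subsolution. Hence, at time $t$, the set
$$\bigcup_{x_0\in B_{\varepsilon_0\lambda t/2}(\lambda c_0 t e)}B_{(c_{uv}-\varepsilon)(1-\lambda)t}(x_0)$$
is $(1,0)$-close, and it contains $B_{\varepsilon t}(cte)$ for small $\varepsilon$. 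Here the uniform estimate \eqref{extending}, together with translation invariance, is what makes the convergence uniform over all centers at once. For $c=0$, I will instead take $\lambda$ small, so that $\varepsilon_0\lambda/2$ and $(c_{uv}-\varepsilon)(1-\lambda)$ still cover a neighborhood of the origin; alternatively, $0\in\W_{uv}$ handles this case directly.
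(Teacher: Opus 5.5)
Your proposal is correct and follows the paper's proof. The paper likewise takes $c_1\in(c_*(e),w_u(e))$ and uses Lemma~\ref{lemma:eball-c} to see that at time $ct/c_1$ the ball $B_{2\varepsilon t}(cte)$ is already $(1,0)$-close. It then places translated copies of $(\underline{u}_\rho,\overline{v}_\rho)$ centered in $B_{\varepsilon t}(cte)$, so that Lemma~\ref{lemma:sub} keeps those centers $(1,0)$-close up to time $t$; this is exactly your choice $\lambda=c/c_0$.

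The sweeping over other $\lambda$ and the compactness covering of the segment are unnecessary. Where you use \eqref{extending} at the center, the paper uses the all-time bound \eqref{lemma2.1-delta} and the arbitrariness of $\delta$; the difference is immaterial.
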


\begin{proof}
For any $0\le c<w_{uv}(e)$, there is $\varepsilon>0$ such that $\overline{B_{\varepsilon}(ce)}\subset \W_{uv}$. Because $\W_{uv}$ is a spreading subset of $u$ and $\W_v\setminus \overline{\W_{uv}}$ is a spreading superset of $v$ by Lemma~\ref{sycuv}, the conclusion follows immediately.

For any $w_{uv}(e)\le c<w_u(e)$, we only have to deal with the case
$$c\le \sup_{\xi\in P(e,\R^+\mathcal{U}(U)\cup \{0\})\setminus\{0\}}\frac{w_v(\widehat{\xi})\sqrt{1-(\widehat{\xi}\cdot \widehat{\overline{e}})^2}}{\sqrt{1-|\overline{e}|^2}},$$
by Lemma~\ref{lemma:eball-c}. Take
$$\sup_{\xi\in P(e,\R^+\mathcal{U}(U)\cup \{0\})\setminus\{0\}}\frac{w_v(\widehat{\xi})\sqrt{1-(\widehat{\xi}\cdot \widehat{\overline{e}})^2}}{\sqrt{1-|\overline{e}|^2}}<c_1<w_u(e).$$
Then, $c_1>c$. By Lemma~\ref{lemma:eball-c}, there is $\varepsilon>0$ such that 
\be\label{2epsilon}
\lim_{t\to +\infty}\sup_{x\in B_{\frac{c_1}{c}2\varepsilon}(c_1 e)}\left \{ |u(t, xt)-1| + |v(t, xt)|\right \} =0.
\ee
Let $\rho>0$ and $\delta\in (0,1)$ be given by Lemma~\ref{lemma:sub}. 
Then, for $t>0$ large enough , it follows from \eqref{2epsilon} that
$$u(\frac{c}{c_1}t,x)\ge 1-\delta,\, v(\frac{c}{c_1}t,x)\le \delta \hbox{ for $x\in B_{2\varepsilon t}(c t e)$}.$$
Even if it means increasing $t$, one can assume that $\varepsilon t>\rho$, that is, $B_{\varepsilon  t+\rho}(c t e)\subset B_{2\varepsilon t}(c t e)$. Then, for any $x_0\in B_{\varepsilon t}(cte)$,
$$u(\frac{c}{c_1}t+s,x_0+x)\ge \underline{u}_{\rho}(s,x),\, v(\frac{c}{c_1}t+s,x_0+x)\le \overline{v}_{\rho}(s,x), \hbox{ for $s\ge 0$ and $x\in\R^N$},$$
where $(\underline{u}_{\rho},\overline{v}_{\rho})(t,x)$ is the solution of \eqref{eq-uv-rho}.
 By Lemma~\ref{lemma:sub}, one has that
$$u(t,x_0)\ge 1-2\delta,\, v(t,x_0)\le 2\delta \hbox{ for any $x_0\in B_{\varepsilon t}(c t e)$}.$$
By arbitrariness of $\delta$, the conclusion follows.
\end{proof}

Symmetrically to Lemma~\ref{lemma:eball-c}, we have the following lemma for $v$ by applying Lemma~\ref{Lemma:Rsigma-v}.

 \begin{lemma}\label{lemma:eball-v1}
Assume that the assumptions of Theorem~\ref{Th1} hold. For every $e\in \mathbb{S}^{N-1}$ with a projection path $P(e,\R^+\mathcal{U}(V)\cup \{0\})$ such that $w_v(\widehat{\xi})>w_u(\widehat{\xi})$ for all $\xi\in P(e,\R^+\mathcal{U}(V)\cup \{0\})\setminus\{0\}$ and
$$\sup_{\xi\in P(e,\R^+\mathcal{U}(V)\cup \{0\})\setminus\{0\}}\frac{w_u(\widehat{\xi})\sqrt{1-(\widehat{\xi}\cdot \widehat{\overline{e}})^2}}{\sqrt{1-|\overline{e}|^2}}<c<w_v(e),$$ 
where $\overline{e}$ is the projection of $e$ on $\R^+\mathcal{U}(V)\cup \{0\}$, with the convention that $\hat{\overline{e}}=0$ if $\overline{e}=0$,
there is $\varepsilon>0$ such that 
$$\lim_{t\to +\infty}\sup_{x\in B_{\varepsilon}(ce)}\left \{ |u(t, xt)| + |v(t, xt)-1|\right \} =0.$$
\end{lemma}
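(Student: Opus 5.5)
The proof is the proof of Lemma~\ref{lemma:eball-c} with the roles of $(u,U,d,r,a,c_u)$ and $(v,V,1,1,b,c_v)$ exchanged. Lemma~\ref{Lemma:Rsigma-v} replaces Lemma~\ref{Rsigma}, and the second halves of Lemmas~\ref{uv0} and \ref{uv1} are used for $u$ instead of $v$. The one genuinely new ingredient is the control of $v$'s environment. The paper's $u$-argument relied on $v$ being small far out; here we need $u$ small along the projection path of $e$ onto $\R^+\mathcal{U}(V)\cup\{0\}$. The hypothesis $w_v(\widehat\xi)>w_u(\widehat\xi)$ gives $\widehat\xi\in\mathcal{B}(U)$ on the set $\mathcal{B}_1:=\{\widehat\xi:\,\xi\in P(e,\R^+\mathcal{U}(V)\cup\{0\})\setminus\{0\}\}$. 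This set is closed: if $\overline e\neq0$, then $w_v(\widehat{\overline e})=+\infty$ while $w_u(\widehat{\overline e})<+\infty$, so $\widehat{\overline e}\in\mathcal{B}(U)$ as well. Hence Lemma~\ref{uv1} applies to $u$ on $\mathcal{B}_1$.

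\emph{Trivial and degenerate cases.} If $e\in\mathcal{U}(V)$, then $w_v(e)=+\infty$. But $e$ lies on its own path, so $w_u(e)<+\infty$, i.e.\ $e\in\mathcal{B}(U)$. Then for any finite $c$, $B_\varepsilon(ce)$ lies far from $\overline{\W_u}$ if $c>w_u(e)$, which the hypothesis guarantees, and $u\to0$ there by Lemma~\ref{uv0}. We then run the Case (2) argument below with $\overline e=e$, i.e.\ with a degenerate path, starting from a point of $V_\alpha$ near $cte$. If $\overline e=0$, then $w_v(e)=c_v$ and the hypothesis reads $w_u(e)<c<c_v$. We mimic Lemma~\ref{lemma:bar-e=0}, which needs only:
\begin{itemize}
\item the set $Q_s$ built from $w_u(e)$, on which Lemma~\ref{uv0} gives $u\le\mu$;
\item Lemma~\ref{delta-uv} for $v$ near a point of $V_\alpha$, giving $v(T,\cdot)\ge\delta$ on a large ball;
\item the compactly supported subsolutions $\underline v_{\hat c,e'}$ of Lemma~\ref{Lemma:Rsigma-v}, with $C=B_{c'\kappa\varepsilon/c}(c'e)\subset B_{c_v}\setminus\{0\}$.
\end{itemize}

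\emph{Case $\overline e\neq0$.} First we prove the analogue of \eqref{Path-lesswu}: $\sup_{\mathcal{B}_1}w_u(\widehat\xi)\sqrt{1-(\widehat\xi\cdot\widehat{\overline e})^2}<c_v$. This uses the identity $w_v(\widehat\xi)\sqrt{1-(\widehat\xi\cdot\widehat{\overline e})^2}=c_v$ on $\mathcal{B}_1\setminus\{\widehat{\overline e}\}$, derived exactly as \eqref{ov-e}--\eqref{wuecu} from the projection property, together with compactness and continuity of $w_u$. Then we choose $c<c'<w_v(e)$, $\varepsilon$ and $M'$ as in \eqref{epsilon-2}--\eqref{eq:M'}. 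We pick $y_t$ with $|y_t|\le\varepsilon^2t/2$ and $ct\overline e+y_t\in V_\alpha$, which is possible since $\mathcal{U}(V)=\mathcal{U}(V_\alpha)$ by \eqref{eq:BUS}. Let $\tilde c,\tilde c',\nu,\kappa$ be as before. The main step is to verify the geometric inclusion \eqref{BallinQs}, with $\widetilde Q_s$ now built from $w_u$, so that $u\le\mu$ along the tube from $ct\overline e+y_t$ to $cte$. The computation of \eqref{xis} carries over verbatim with $w_v$ replaced by $w_u$, using the new sup bound and $c>\sup(\cdots)$.

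\emph{Growth of $v$ along the tube and conclusion.} Lemma~\ref{delta-uv} for $v$ starts $\underline v_{\hat c,e'}$ below $v(\varepsilon t,\cdot+ct\overline e+y_t)$, and Lemma~\ref{Lemma:Rsigma-v} then gives $v\ge\delta'$ on $B_{\kappa\varepsilon t+R/2}(cte)$ at time $\tilde ct/\tilde c'+\varepsilon t$. Next, the stationary eigenfunction subsolution $\eta\Phi_{R/2}$ for $v$ keeps $v\ge\eta'$ there up to time $t$, because $bu\le b\mu$ is small. A blow-up/Liouville argument via Aronson--Weinberger for $v_t\ge\Delta v+v(1-v-b\mu)$ then gives $v\to1$ uniformly on $B_{\kappa\varepsilon t}(cte)$. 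Finally, $u\to0$ on $B_\varepsilon(ce)t$ because $c>w_u(e)$: shrinking $\varepsilon$ so that $\overline{B_\varepsilon(ce)}\cap\overline{\W_u}=\emptyset$, this follows from Lemma~\ref{uv0}. I expect the main obstacle to be the inclusion \eqref{xis}. It requires checking that the tube is at linear distance beyond $w_u$ in each direction $\widehat{\xi_s}$, uniformly in $s$, and the choice of $\varepsilon,M'$ must be adapted to $c_v$ in place of $c_u$.
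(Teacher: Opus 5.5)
Your plan is the same as the paper's. The paper's entire proof of this lemma is the remark that it follows from Lemma~\ref{lemma:eball-c} by symmetry, using Lemma~\ref{Lemma:Rsigma-v}. However, the symmetry is not exact, and it breaks at a step other than the one you flag.

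\textbf{Where the symmetry actually breaks.} Smallness of $u$ along the tube is the literal mirror of \eqref{2-v-Qs} and causes no trouble: it comes from $\widetilde Q_s$ built with $w_u$ and the first half of Lemma~\ref{uv1} on $\mathcal{B}_1\subset\mathcal{B}(U)$. The asymmetric step is the launch.

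\begin{itemize}
\item In Lemma~\ref{lemma:eball-c}, the bound $u(\varepsilon t,\cdot)\ge\delta$ on $B_R(ct\overline e+y_t)$ holds with $\delta$ \emph{independent of $t$}. This uniformity comes from \eqref{Bcuv}, i.e.\ from the bistable invasion of Lemma~\ref{lemma:sub} started in $U_\rho$. That holds whatever $v$ does, relies on $c_{uv}>0$, and has no analogue for $v$.
\item You instead invoke Lemma~\ref{delta-uv} for $v$ at time $\varepsilon t$. The $\delta$ there depends on the time at which it is applied and tends to $0$ as that time grows.
\item Consequently $\sigma$, $\delta'$, $\eta$ and $\eta'$ all depend on $t$ and may degenerate.
\item The concluding compactness/Liouville step needs $v_n\ge\eta'$ on $B_{R/4}$ with $\eta'$ fixed. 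With degenerating constants it yields only $v_\infty\ge 0$, and no contradiction.
\end{itemize}

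\textbf{Repair of the launch.} The fix is short but must be supplied.
\begin{itemize}
\item Apply Lemma~\ref{delta-uv} at a fixed large time $T_0$ on $B_{R'}(ct\overline e+y_t)$.
\item Carry the bound up to time $\varepsilon t$ with the stationary subsolution $\eta\Phi_{R'}(\cdot-ct\overline e-y_t)$.
\item This is legitimate because $u\le\mu$ there for $s\in[T_0,\varepsilon t]$. For $|z|\le R'$, the point $(ct\overline e+y_t+z)/s$ has direction in a small closed neighbourhood $\mathcal{B}'\subset\mathcal{B}(U)$ of $\widehat{\overline e}$.
\item Its modulus is at least $(c|\overline e|-\varepsilon^2)/\varepsilon-R'/T_0$, which exceeds $\sup_{\mathcal{B}'}w_u+1$ once $\varepsilon$ is small. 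Hence Lemma~\ref{uv1} applies on $\mathcal{B}'$.
\end{itemize}

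\textbf{The degenerate case $e\in\mathcal{U}(V)$.} This case cannot be obtained by rerunning Case (2) with $\overline e=e$. In that case $\tilde c=\tilde c'=|y_t|/t\to 0$, and three things fail:
\begin{itemize}
\item the velocity ball $B_{\tilde c'\kappa\varepsilon/\tilde c}(\tilde c'\nu)=B_{\varepsilon/4}(\tilde c'\nu)$ contains $0$, so Lemma~\ref{Lemma:Rsigma-v} does not apply;
\item \eqref{eq:c'+c} fails;
\item the arrival time $(1+\varepsilon)t$ exceeds $t$.
\end{itemize}
A direct argument is needed instead. Since $e\in\mathcal{B}(U)$ and $c>w_u(e)$, fix $0<\varepsilon'<c-w_u(e)$. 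Then $B_{\kappa_0 t}(cte)\subset s\,\mathcal{C}^{\varepsilon'}_u(e)$ for all $0<s\le t$, with $\kappa_0=c_u(c/(w_u(e)+\varepsilon')-1)$. Lemma~\ref{uv0} then gives $u\le\mu$ on this ball for all $s\in[T_0,t]$. Inside it, spread $v$ from a point of $V_\alpha$ lying within $o(t)$ of $cte$, launched at the fixed time $T_0$, and conclude with the same Liouville argument.
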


To remove the restriction of $c$ in Lemma~\ref{lemma:eball-v1}, we introduce another tool in the paper of Aronson and Weinberger \cite{Aro}.

\begin{lemma}\label{lemm3}
	For any $0<\delta<1/b$ and  any $c\in (0, c_{\delta})$
	($c_{\delta}=2\sqrt{1-b\delta}$), there exist
	$\beta(c)<1-b\delta$ such that for any $\beta\in [\beta(c),1-b\delta)$
	the solution of
	\begin{equation}
		\begin{cases}
			\hat{V}''+c\hat{V}'+\hat{V}(1-\hat{V}-b\delta)=0,\\
			\hat{V}(0)=\beta, ~~\hat{V}'(0)=0
		\end{cases}
	\end{equation}
	satisfies $\hat{V}(R)=0$ for some $R>0$ and $\hat{V}'(\xi)<0$ for
	$\xi \in (0, R]$.
\end{lemma}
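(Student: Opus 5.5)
This is a statement about the scalar ODE, so I will argue by phase-plane analysis. Write $k:=1-b\delta\in(0,1)$. The equation is $\hat V''+c\hat V'+\hat V(k-\hat V)=0$. Its equilibria are $(0,0)$ and $(k,0)$ in the $(\hat V,\hat V')$-plane. Since $c<c_\delta=2\sqrt{k}$, the linearization at the origin has eigenvalues $\lambda=\frac{-c\pm\sqrt{c^2-4k}}{2}$ with nonzero imaginary part, so $(0,0)$ is a stable focus. At $(k,0)$ the linearization is $\lambda^2+c\lambda-k=0$, which has one positive and one negative root, so $(k,0)$ is a saddle.

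The plan is to follow the trajectory starting at $(\beta,0)$ with $\beta$ slightly below $k$, up to its first hit of the axis $\hat V=0$. First, at $\xi=0$ we have $\hat V''(0)=-\beta(k-\beta)<0$ for $\beta\in(0,k)$, so $\hat V'<0$ on some interval $(0,\xi_1)$. Next, as long as $\hat V'<0$ and $\hat V\in(0,k)$, I will use $\hat V$ as the parameter and write $p=\hat V'$ as a function of $\hat V$. This gives the first-order equation $p\,dp/d\hat V=-cp-\hat V(k-\hat V)$. I will show $p$ cannot return to $0$ while $0<\hat V<\beta$. Indeed, at a first return point with $\hat V\in(0,\beta)$ we would need $\hat V''\ge 0$ with $\hat V'=0$, but the equation gives $\hat V''=-\hat V(k-\hat V)<0$. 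Hence $\hat V'<0$ as long as $\hat V\in(0,\beta]$. So the only way to fail is that $\hat V$ stays positive forever and tends to $0$ monotonically, i.e. the trajectory enters the origin without oscillating.

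The main obstacle is to exclude that monotone approach, and this is exactly where $c<2\sqrt k$ is used. Since the origin is a focus, no trajectory can converge to it while $\hat V>0$ stays of one sign. Concretely, if $\hat V\to0^+$ monotonically with $\hat V'\le0$, then $\hat V$ would eventually satisfy the linearized equation up to small error, and the linearization has only oscillating solutions. I would make this rigorous with a Sturm comparison. Fix $\varepsilon>0$ so small that $c^2<4(k-\varepsilon)$. Once $\hat V<\varepsilon$, $\hat V$ solves $\hat V''+c\hat V'+q(\xi)\hat V=0$ with $q\ge k-\varepsilon$. The substitution $W=e^{c\xi/2}\hat V$ gives $W''+(q-c^2/4)W=0$ with $q-c^2/4\ge k-\varepsilon-c^2/4>0$. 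By Sturm comparison, $W$, and hence $\hat V$, must vanish within a distance of at most $\pi/\sqrt{k-\varepsilon-c^2/4}$. Therefore $\hat V$ reaches $0$ at some finite $R$. Moreover, $\hat V'(R)<0$: if $\hat V'(R)=0$ as well, uniqueness would force $\hat V\equiv0$. Together with the previous step, this gives $\hat V'<0$ on $(0,R]$.

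This argument appears to work for every $\beta\in(0,k)$, so I would simply take $\beta(c)\in(0,k)$ arbitrary, for instance $\beta(c)=k/2$. The restriction $\beta\ge\beta(c)$ is needed only later, presumably so that the compactly supported subsolution $\max(\hat V(|x|-ct),0)$ built from $\hat V$ has height close to $1-b\delta$. If the authors' intended construction requires more, e.g. $R$ large or $\beta$ close to $k$, I would check that near the saddle the time spent grows as $\beta\to k$. This follows by continuity of the flow and the fact that the trajectory from $(\beta,0)$ stays near the unstable manifold of $(k,0)$, so it still reaches $\hat V=0$ by the same argument.
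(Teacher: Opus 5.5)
Your proof is correct. The paper does not prove this lemma at all; it quotes it from Aronson--Weinberger, where the statement is designed for general nonlinearities $f$ and speeds $c$ below the minimal front speed $c^*$, which need not equal $2\sqrt{f'(0)}$.

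In that generality the natural phase-plane argument goes through the saddle $(k,0)$, with your $k=1-b\delta$. For $c<c^*$, the branch of the unstable manifold of $(k,0)$ entering $\{p<0\}$ cannot reach the origin inside $\{q>0,\ p<0\}$, since that would be a monotone front of speed $c$. So it crosses $\{q=0\}$ with $p<0$. Continuous dependence then transfers this to the trajectories starting at $(\beta,0)$ with $\beta$ close to $k$, which explains the threshold $\beta(c)$ in the statement. That route is needed there: when $c^*>2\sqrt{f'(0)}$ and $c\in(2\sqrt{f'(0)},c^*)$, the origin is a stable node and trajectories from $(\beta,0)$ with small $\beta$ do enter it monotonically.

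You instead use the KPP structure $f(q)=q(k-q)$ directly. Positivity of $f$ on $(0,\beta]$ stops $\hat V'$ from returning to $0$. The condition $c<2\sqrt{k}=2\sqrt{f'(0)}$ makes the origin a focus; here $c_\delta$ is exactly the linear speed. Your substitution $W=e^{c\xi/2}\hat V$ plus Sturm comparison turns the focus into a quantitative oscillation bound.

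What your route buys:
\begin{itemize}
\item It is elementary and self-contained.
\item It works for every $\beta\in(0,1-b\delta)$. This more than covers the use in Lemma~\ref{lemma:eball-v}. There $\hat V$ builds a stationary plateau subsolution: $\hat V'(0)=0$ gives the junction with the constant $\beta$, and $\hat V'<0$ together with $c'>(N-1)/|x|$ gives the subsolution inequality. The limit $\beta\to 1-b\delta$ is only used at the very end.
\end{itemize}
What it gives up is any extension to nonlinearities with $c^*>2\sqrt{f'(0)}$.

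The one step you leave implicit is that a positive decreasing solution must tend to $0$ rather than to some $L\in(0,\beta)$. The formula $\hat V'(\xi)=-e^{-c\xi}\int_0^\xi e^{cs}\hat V(k-\hat V)\,ds$ settles it. It gives boundedness of $\hat V'$, hence global existence. It also gives $\hat V'\to -L(k-L)/c$, which forces $L=0$.
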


 \begin{lemma}\label{lemma:eball-v}
Assume that the assumptions of Theorem~\ref{Th1} hold. For every $e\in \mathbb{S}^{N-1}$ with a projection path $P(e,\R^+\mathcal{U}(V)\cup \{0\})$ such that $w_v(\widehat{\xi})>w_u(\widehat{\xi})$ for all $\xi\in P(e,\R^+\mathcal{U}(V)\cup \{0\})\setminus\{0\}$ and $w_u(e)<c<w_v(e)$ ,
there is $\varepsilon>0$ such that 
$$\lim_{t\to +\infty}\sup_{x\in B_{\varepsilon}(ce)}\left \{ |u(t, xt)| + |v(t, xt)-1|\right \} =0.$$
\end{lemma}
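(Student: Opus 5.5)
Fix $w_u(e)<c<w_v(e)$. The plan is to mirror the proof of Lemma~\ref{lemma:wu>wv}: first use Lemma~\ref{lemma:eball-v1} at a larger speed, then carry the region where $(u,v)\approx(0,1)$ backwards along the ray $\R^+e$, using a compactly supported stationary subsolution for $v$ built from Lemma~\ref{lemm3}. The difference from Lemma~\ref{lemma:wu>wv} is that $v$ is not a bistable invader. We cannot use Lemma~\ref{lemma:sub}. Instead we use that $u$ is small beyond its own spreading front, since $c>w_u(e)$.

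\emph{Step 1: smallness of $u$.} Since $w_u(e)<w_v(e)\le+\infty$ and \eqref{eq:BUS} holds, we have $e\in\mathcal{B}(U)$. Choose $w_u(e)<c_0<c$. By Lemma~\ref{uv0}, and since $w_u$ is continuous so that $\partial\W_u$ is continuous, there is $\varepsilon_0>0$ such that for every $\mu>0$ and all large $t$,
$$u(t,x)\le\mu\quad\hbox{for } x\in\{se':\ s\ge c_0t,\ |e'-e|\le\varepsilon_0\}.$$
This holds uniformly, using the cone $\mathcal{C}^{\varepsilon}_u(e)$ together with a compact subset of $\R^N\setminus\overline{\W_u}$, exactly as in the construction of $Q_s$ in Lemma~\ref{lemma:bar-e=0}. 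In particular $u\le\mu$ on a tube of width $\varepsilon_0 s$ around the segment $\{\lambda e:\ c_0s\le\lambda\le c_1 s\}$ for every $c_1$.

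\emph{Step 2: seeding $v$ at a faster point.} Pick $c_1$ with $\max(c,S)<c_1<w_v(e)$, where $S$ is the supremum appearing in Lemma~\ref{lemma:eball-v1}. Note $S$ is finite and, as in \eqref{Path-lesswu}, strictly smaller than $w_v(e)$. Lemma~\ref{lemma:eball-v1} then gives $\varepsilon_1>0$ such that $(u,v)(\tau,x)\to(0,1)$ uniformly for $x\in B_{\varepsilon_1\tau}(c_1\tau e)$. Set $\tau=\frac{c}{c_1}t$. Then at time $\tau$ we have $v\ge 1-\delta$ on $B_{\varepsilon' t}(cte)$ with $\varepsilon'=\varepsilon_1c/c_1$.

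\emph{Step 3: propagation of $v\approx1$.} The interval $[\tau,t]$ has length proportional to $t$. On it, the region $B_{\varepsilon' t}(cte)$ must stay in the $v$-dominated zone. For $x$ near $cte$ and $s\in[\tau,t]$ we have $|x|/s\ge c-o(1)>c_0$, so by Step~1, $u\le\mu$ on $B_{\varepsilon'' s}(cte)$ for $s\in[\tau,t]$. Fix $\delta=\mu$ and $c_*\in(0,c_\delta)$. Lemma~\ref{lemm3} provides a radially decreasing stationary profile $\hat V(|x-x_0|)$ on $B_{R}(x_0)$, vanishing at $|x-x_0|=R$, which is a subsolution of $v_t=\Delta v+v(1-v-bu)$ wherever $u\le\delta$. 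Here we take $c_*$ matched to the radial drift $(N-1)/|x|$, as in Aronson--Weinberger, and extend $\hat V$ by $0$ outside the ball. Since $\hat V\le\beta<1-\delta\le v(\tau,\cdot)$ on the ball, comparison gives $v(s,x)\ge\hat V(|x-x_0|)$ for $s\in[\tau,t]$ and all $x_0\in B_{\varepsilon' t/2}(cte)$. This yields a positive lower bound $v\ge\eta'$ on $B_{\varepsilon' t/2+R/2}(cte)$ throughout $[\tau,t]$.

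\emph{Step 4: limiting argument.} Argue by contradiction, as in \eqref{2-e-Bce}. Translate to a sequence $(t_n,x_n)$ with $v(t_n,x_n)\le 1-\eta''$. In the limit we obtain an entire supersolution of $v_t=\Delta v+v(1-v-b\mu)$ which stays $\ge\eta'$ on a fixed ball for all past times. The Aronson--Weinberger result then forces $v\ge1-b\mu$, a contradiction once $\mu$ is small. Smallness of $u$ on $B_{\varepsilon t}(cte)$ is already given by Step~1. Combining the two bounds gives the claim with $\varepsilon<\varepsilon'/(2c)$.

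The main obstacle is Step~3, that is, verifying the subsolution inequality for the Lemma~\ref{lemm3} profile. One must control $u$ uniformly on whole balls of radius $\sim\varepsilon t$ over the entire time window $[\tau,t]$, with $\tau=ct/c_1$, and handle the radial Laplacian term. We would take $c_*\ge (N-1)/R_0$ for a large inner radius, or simply use the principal-eigenfunction subsolution $\eta\Phi_{R/2}$ of Lemma~\ref{lemma:eball-c} adapted to $v$. Since here only a positive lower bound is needed before the Liouville-type step, the latter is the first thing we would try.
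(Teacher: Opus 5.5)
Your argument is correct, and Steps 1–3 follow the paper's skeleton.

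\begin{itemize}
\item You seed $v\approx 1$ on $B_{2\varepsilon t}(cte)$ at time $ct/c_1$ via Lemma~\ref{lemma:eball-v1}, with $S<c_1<w_v(e)$.
\item You keep $u\le\delta$ on the whole window $s\in[ct/c_1,t]$. The paper does this with a compact set only, no cone: $\frac1s B_{\varepsilon t+\varepsilon s}(cte)=B_{(\tau+1)\varepsilon}(\tau ce)$ with $\tau=t/s\in[1,c_1/c]$, and $\overline{\cup_{1\le\tau\le c_1/c}B_{(\tau+1)\varepsilon}(\tau ce)}\subset\R^N\setminus\overline{\mathcal{W}_u}$. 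So the cone $\mathcal{C}^{\varepsilon}_u(e)$ in your Step~1 is unnecessary.
\item You compare with a compactly supported stationary subsolution centred at each $x_0\in B_{\varepsilon t}(cte)$.
\end{itemize}

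The difference is the finish. The paper's subsolution is exactly your ``inner radius'' fix. It sets $\underline v=\beta$ on $B_\rho$, $\underline v=\hat V(|x|-\rho)$ for $\rho\le|x|\le\rho+R$, and $\underline v=0$ outside, with $\rho\ge (N-1)/c'$. Then $-\Delta\underline v-\underline v(1-\underline v-b\delta)=(c'-\frac{N-1}{|x|})\hat V'\le 0$.

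The plateau is genuinely needed. Without it, $\hat V(|x-x_0|)$ is not a subsolution near $x_0$, because $(c'-\frac{N-1}{r})\hat V'(r)>0$ for $0<r<(N-1)/c'$. So your Step~3 as first written fails, and only the fix you list under ``main obstacle'' works.

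Lemma~\ref{lemm3} allows any $\beta\in[\beta(c'),1-b\delta)$. Comparison therefore gives directly $v(t,\cdot)\ge\beta$ on $B_{\varepsilon t}(cte)$. Letting $\delta\to0$ and $\beta\to 1-b\delta$ finishes the proof, so your Step~4 is superfluous.

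Your alternative also works: an eigenfunction bump $\eta\Phi_{R/2}$ gives only a positive lower bound, and the Aronson--Weinberger limiting argument of Lemma~\ref{lemma:eball-c} then upgrades it. This route dispenses with Lemma~\ref{lemm3}. It costs a compactness argument and needs $u\le\mu$ on balls around $x_n$ of radius about $(\varepsilon''c/c_1-\varepsilon)t_n\to\infty$ over all past times. Your Step~1 does supply that.

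One small slip: in scaled variables $B_\varepsilon(ce)$ corresponds to $B_{\varepsilon t}(cte)$. So you need $\varepsilon\le\varepsilon'/2$ (together with $\varepsilon<\varepsilon''c/c_1$), not $\varepsilon<\varepsilon'/(2c)$. The latter is too weak when $c<1$, which can happen since $c>w_u(e)\ge 2\sqrt{dr}$ only.
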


\begin{proof}
Take any $\delta\in (0,1/b)$ and $c'\in (0,c_{\delta})$.  Take any $\beta\in [\beta(c'),1-b\delta)$. Let $\hat{V}$ be given by Lemma~\ref{lemm3} for $\delta$, $c'$ and $\beta$. For $t\ge 0$ and $x\in\R^N$, define
\begin{equation}
		\underline{v}(t, x) =
		\begin{cases}
			\beta & \text{if } |x|<\rho,\\
			\hat{V}(|x|-\rho) & \text{if } \rho\le
			|x|\le \rho+R,\\
			0 & \text{if } |x| > \rho+R,
		\end{cases}
	\end{equation}
	where $\rho$ is a large constant to be chosen.
For $\rho\le |x|\le \rho+R$, one can check that
\begin{equation}
\begin{aligned}
\underline{v}_t-\Delta \underline{v}-\underline{v}(1-\underline{v}-b\delta)=-\hat{V}''-\hat{V}'\frac{N-1}{|x|}-\hat{V}(1-\hat{V}-b\delta)=\Big(c'-\frac{N-1}{|x|}\Big)\hat{V}'\le 0,
\end{aligned}
\end{equation}
for $\rho$ sufficiently large. Obviously, such an inequality holds for $|x|<\rho$ and $|x|>\rho+R$.

For any $w_{u}(e)\le c<w_v(e)$, we only have to deal with the case
$$c\le \sup_{\xi\in P(e,\R^+\mathcal{U}(V)\cup \{0\})\setminus\{0\}}\frac{w_u(\widehat{\xi})\sqrt{1-(\widehat{\xi}\cdot \widehat{\overline{e}})^2}}{\sqrt{1-|\overline{e}|^2}},$$
by Lemma~\ref{lemma:eball-v1}. Take
$$\sup_{\xi\in P(e,\R^+\mathcal{U}(V)\cup \{0\})\setminus\{0\}}\frac{w_u(\widehat{\xi})\sqrt{1-(\widehat{\xi}\cdot \widehat{\overline{e}})^2}}{\sqrt{1-|\overline{e}|^2}}<c_1<w_v(e).$$
Then, $c_1>c$. By Lemma~\ref{lemma:eball-v1}, there is $\varepsilon>0$ such that 
\be\label{2epsilon-v}
\lim_{t\to +\infty}\sup_{x\in B_{\frac{c_1}{c}2\varepsilon}(c_1 e)}\left \{ |u(t, xt)| + |v(t, xt)-1|\right \} =0.
\ee
By the continuity of $\partial\W_u$, one can decreasing $\varepsilon$ such that 
$$\overline{\underset{1\le \tau\le c_1/c}{\cup} B_{(\tau+1) \varepsilon}(\tau c e)}\subset \R^N\setminus \overline{\W_u}.$$
Then, $\frac{1}{s}B_{\varepsilon t+s\varepsilon}(cte)\subset \underset{1\le \tau\le c_1/c}{\cup} B_{(\tau+1) \varepsilon}(\tau c e)$ for all $s\in [ct/c_1,t]$. Since $\W_u$ is a spreading superset of $u$, there is $T>0$ such that 
$$u(s,x)\le \delta,  \hbox{ for $t\ge T$, $s\in [ct/c_1,t]$ and $x\in B_{\varepsilon t}(c t e)$}.$$
By \eqref{2epsilon-v}, even if it means increasing $T$, it holds that
$$v(\frac{c}{c_1}t,x)\ge \beta \hbox{ for $t\ge T$ and $x\in B_{2\varepsilon t}(c t e)$},$$
and $2\varepsilon t\ge \varepsilon t+\rho+R$, that is, $B_{\varepsilon  t+\rho+R}(c t e)\subset B_{2\varepsilon t}(c t e)$ for $t\ge T$. It implies that for any $x_0\in B_{\varepsilon t}(c t e)$,
$$v(\frac{c}{c_1}t,x_0+x)\ge \underline{v}(0,x), \hbox{ for all $x\in\R^N$}.$$
Then, $v(\frac{c}{c_1}t+s,x_0+x)\ge \underline{v}(s,x)$ for  $s\in [0,t-ct/c_1]$ with any $x_0\in B_{\varepsilon t}(cte)$.
One has that
$$v(t,x)\ge \beta \hbox{ for $x\in B_{\varepsilon t}(c t e)$}.$$
By arbitrariness of $\beta$ and $\delta$, the conclusion follows.
\end{proof}

Now, we are ready to prove Theorem~\ref{Th1}.

\begin{proof}[Proof of Theorem~\ref{Th1}]
We first show \eqref{Sspeed-u}. By Lemma~\ref{lemma:wu>wv}, for any $0<c'\le c<w_u(e)$, there is $\varepsilon(c')>0$ such that
$$\lim_{t\to +\infty}\sup_{x\in  B_{\varepsilon(c')}(c'e)}\left \{ |u(t, xt)-1| + |v(t, xt)|\right \} =0.$$
Since the line segment $\{se:\, 0\le s\le c\}$ can be covered by finite balls $B_{\varepsilon(c_i)}(c_i e)$ with $0\le c_i\le c$, the first line of \eqref{Sspeed-u} is proved. The second line of \eqref{Sspeed-u} is proved by Lemma~\ref{uv0} and the ray $\{se:\, s\ge c\}\subset \mathcal{C}_u^{\varepsilon}(e)$ with $\varepsilon<c-w_u(e)$. For the third line of \eqref{Sspeed-u}, we know from \eqref{Pathwu>wv} that $w_u(e)>w_v(e)$. For $w_v(e)<c<w_u(e)$, the first line of \eqref{Sspeed-u} has shown that
$$\lim_{t\rightarrow +\infty}\sup_{0\le s\le c}|v(t,ste)|=0.$$
Meantime, by Lemma~\ref{uv0} and $\{se:\,s\ge c\}\subset \mathcal{C}_v^{\varepsilon}(e)$ with $\varepsilon<c-w_v(e)$, one has 
$$\lim_{t\rightarrow +\infty}\sup_{s\ge c}|v(t,ste)|=0.$$
Then, the third line of \eqref{Sspeed-u} follows.

Then, we show \eqref{Sspeed-v}. For any $0\le c<w_{uv}(e)$, the line segment $\{se:\, 0\le s\le c\}$ is a compact subset of $\W_{uv}$. The first line of \eqref{Sspeed-v} follows from Lemma~\ref{sycuv}. For any $w_u(e)<c_1<c_2<w_v(e)$, the line segment $\{se:\, c_1\le s\le c_2\}$ is a compact subset of $\W_v\setminus \overline{\W_u}$. The second line of \eqref{Sspeed-v} follows from Lemma~\ref{lemma:eball-v}. For $c>w_v(e)>w_u(e)$, $\{se:\,s\ge c\}\subset \mathcal{C}_v^{\varepsilon}(e)$ and $\{se:\,s\ge c\}\subset \mathcal{C}_u^{\varepsilon}(e)$ with $\varepsilon<c-w_v(e)$. The third line of \eqref{Sspeed-v} follows from Lemma~\ref{uv0}.
\end{proof}

\subsection{Proof of Theorem~\ref{Th2}}
In this subsection, we prove Theorem~\ref{Th2}. By Definitions~\ref{def:Path}-\ref{def:starshape},  if $\R^+(\W_u\setminus \overline{\W_v})\cup \{0\}$  and $\R^+(\W_v\setminus \overline{\W_u})\cup \{0\}$ are star-shaped with respect to $\R^+\mathcal{U}(U)\cup \{0\}$ and $\R^+\mathcal{U}(V)\cup \{0\}$ respectively, then $P(e,\R^+\mathcal{U}(U)\cup \{0\})\subset \R^+(\W_u\setminus \overline{\W_v})\cup \{0\}$ for every $e\in\mathbb{S}^{N-1}$ such that $w_u(e)>w_v(e)$ and $P(e,\R^+\mathcal{U}(V)\cup \{0\})\subset \R^+(\W_v\setminus \overline{\W_u})\cup \{0\}$ for every $e\in\mathbb{S}^{N-1}$ such that $w_u(e)<w_v(e)$ which implies \eqref{Pathwu>wv} and \eqref{Pathwu<wv}. Therefore, \eqref{Starshape} is stronger than \eqref{Pathwu>wv} and \eqref{Pathwu<wv}. So, all lemmas in Section~\ref{Section1.7} are available under conditions of Theorem~\ref{Th2}.

We first show a byproduct of Lemma~\ref{lemma:wu>wv} and Lemma~\ref{lemma:sub}.

 \begin{lemma}\label{Su-cone}
Assume that the assumptions of Theorem~\ref{Th1} hold. For every $e\in \mathbb{S}^{N-1}$ with a projection path $P(e,\R^+\mathcal{U}(U)\cup \{0\})$ such that $w_u(\widehat{\xi})>w_v(\widehat{\xi})$ for all $\xi\in P(e,\R^+\mathcal{U}(U)\cup \{0\})\setminus\{0\}$, $c_{uv}<c<w_u(e)$ and any $\varepsilon\in (0,c_{uv})$,
it holds that 
$$\lim_{t\to +\infty}\sup_{x\in \underset{0< \tau< 1}{\cup}B_{\tau (c_{uv}-\varepsilon)}((1-\tau) ce)}\left \{ |u(t, xt)-1| + |v(t, xt)|\right \} =0.$$
\end{lemma}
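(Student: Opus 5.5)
The idea is to combine Lemma~\ref{lemma:wu>wv}, which makes $(u,v)$ close to $(1,0)$ near the point $cte$ at every intermediate time, with the expanding bubble of Lemma~\ref{lemma:sub}, which spreads from such a seed at speed close to $c_{uv}$. A point $xt$ with $x\in B_{\tau(c_{uv}-\varepsilon)}((1-\tau)ce)$ is reached as follows. At time $(1-\tau)t$ the solution is close to $(1,0)$ near $(1-\tau)t\cdot ce$. During the remaining time $\tau t$, the invaded region expands by radius about $(c_{uv}-\varepsilon)\tau t$.

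The steps are as follows.

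\begin{enumerate}
\item Fix $\varepsilon\in(0,c_{uv})$, and let $\rho>0$ and $\delta\in(0,\delta_0)$ be given by Lemma~\ref{lemma:sub}, applied with $\varepsilon/2$ in place of $\varepsilon$.

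\item Since $c<w_u(e)$, Lemma~\ref{lemma:wu>wv} applied at speed $c$ gives $\varepsilon_1>0$ and $T_1$ such that
\[
u(s,y)\ge 1-\delta,\qquad v(s,y)\le\delta \qquad\text{for } s\ge T_1 \text{ and } y\in B_{\varepsilon_1 s}(cse).
\]
For $s$ large we have $\varepsilon_1 s>\rho$. Hence, for such $s$, comparison gives
\[
u(s+\sigma,\,cse+y)\ge \underline{u}_\rho(\sigma,y),\qquad v(s+\sigma,\,cse+y)\le \overline{v}_\rho(\sigma,y)\qquad\text{for all }\sigma\ge0,\ y\in\R^N.
\]

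\item By \eqref{lemma2.1-delta}, this yields $u\ge 1-2\delta$ and $v\le2\delta$ at $(s+\sigma,\,cse+y)$ whenever $|y|\le (c_{uv}-\varepsilon/2)\sigma$.

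\item Given $t$ and $x$ with $|x-(1-\tau)ce|<\tau(c_{uv}-\varepsilon)$, set $s=(1-\tau)t$ and $\sigma=\tau t$. Then $|xt-cse|<(c_{uv}-\varepsilon)\sigma$, so $xt$ lies in the good region of step 3.
\end{enumerate}

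The main obstacle is uniformity as $\tau$ ranges over $(0,1)$.

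For $\tau$ near $1$, the time $s=(1-\tau)t$ may not exceed $T_1$. This case is handled by the region near the origin. The points $x$ with $\tau$ close to $1$ lie in a compact subset of $B_{c_{uv}}\subset\W_{uv}$, up to an arbitrarily thin layer. More precisely, if $1-\tau\le\eta$ then $|x|\le \eta c+(c_{uv}-\varepsilon)<c_{uv}-\varepsilon/2$ for $\eta$ small. Lemma~\ref{sycuv} then gives the convergence uniformly there.

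For $\tau\le 1-\eta$ we have $s\ge\eta t\to\infty$, so step 2 applies uniformly in $\tau$. The margin $\varepsilon/2$ absorbs the requirement $|y|\le(c_{uv}-\varepsilon/2)\sigma$.

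For $\tau$ near $0$, $\sigma$ may be small. This causes no problem, because the region is then within $B_{\varepsilon_1 s}(cse)$, where step 2 applies directly. Alternatively, the sub/supersolution bound holds at $\sigma=0$ as well.

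Finally, letting $\delta\to0$ gives the uniform limit over the whole union.
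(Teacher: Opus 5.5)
Your proposal is correct and takes essentially the same route as the paper. The paper also handles $\tau$ near $1$ with Lemma~\ref{sycuv}, since those balls lie in the compact set $\overline{B_{c_{uv}-\varepsilon/2}}\subset\W_{uv}$. For the remaining $\tau$ it uses Lemma~\ref{lemma:wu>wv} to seed the bubble of Lemma~\ref{lemma:sub} at time $(1-\tau)t$ around $(1-\tau)cte$, with a threshold time independent of $\tau$. Your $\varepsilon/2$ margin and your separate discussion of $\tau$ near $0$ are harmless but unnecessary, because \eqref{lemma2.1-delta} already holds for all $t\ge0$ and $|x|\le(c_{uv}-\varepsilon)t$.
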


\begin{proof}
Since $\overline{B_{c_{uv}-\varepsilon/2}}$ is a compact subset of $\W_{uv}$ and $B_{\tau (c_{uv}-\varepsilon)}((1-\tau) ce)\subset \overline{B_{c_{uv}-\varepsilon/2}}$ for all $\tau\ge 1-\varepsilon/
(2c-2c_{uv}+2\varepsilon)$, it follows from Lemma~\ref{sycuv} that
\be\label{small-tau}
\lim_{t\to +\infty}\sup_{x\in \underset{1-\varepsilon/(2c-2c_{uv}+2\varepsilon)\le  \tau< 1}{\cup}B_{\tau (c_{uv}-\varepsilon)}((1-\tau) ce)}\left \{ |u(t, xt)-1| + |v(t, xt)|\right \} =0.
\ee

From Lemma~\ref{lemma:wu>wv}, one has that 
$$\lim_{t\to +\infty}\sup_{x\in  B_{ \varepsilon}(ce)}\left \{ |u(t, xt)-1| + |v(t, xt)|\right \} =0.$$
Thus, for any $0<\tau\le 1-\varepsilon/
(2c-2c_{uv}+2\varepsilon)$ and $\delta>0$, there is $T>0$ independent of $\tau$ such that $\varepsilon (1-\tau)T\ge \rho$ with $\rho>0$ given by Lemma~\ref{lemma:sub} and
$$u((1-\tau)t,x)\ge 1-\delta,\, v((1-\tau)t,x)\le \delta, \hbox{ for $t\ge T$ and $x\in B_{\rho}((1-\tau)c t e)$}.$$
Then, 
$$u((1-\tau)t+s,(1-\tau)cte+x)\ge \underline{u}_{\rho}(s,x),\, v((1-\tau)t+s,(1-\tau)cte+x)\le \overline{v}_{\rho}(s,x),$$ 
for $s\ge 0$ and $x\in\R^N$,
where $(\underline{u}_{\rho},\overline{v}_{\rho})$ is the solution of \eqref{eq-uv-rho}.
By Lemma~\ref{lemma:sub}, one has that
$$u(t,x)\ge 1-2\delta,\, v(t,x)\le 2\delta, \hbox{ for $t\ge T$ and $x\in B_{(c_{uv}-\varepsilon) \tau t }((1-\tau)c t e)$}.$$
By arbitrariness of $\delta$, one gets that
\be\label{large-tau}
\lim_{t\to +\infty}\sup_{x\in \underset{0< \tau\le 1-\varepsilon/
(2c-2c_{uv}+2\varepsilon)}{\cup}B_{\tau (c_{uv}-\varepsilon)}((1-\tau) ce)}\left \{ |u(t, xt)-1| + |v(t, xt)|\right \} =0.
\ee

The conclusion follows from \eqref{small-tau} and \eqref{large-tau}.
\end{proof}

Lemma~\ref{Su-cone} has already implied that the set $\mathcal{S}_u$ is a spreading subset of $u$. By the definition of $\mathcal{S}_u$, one knows that $\W_u\setminus \overline{\mathcal{S}_u}\subset  \W_v\setminus \overline{\mathcal{S}_u}=\mathcal{S}_v$. Since $\W_u$ is a spreading superset of $u$, we only have to show that $\mathcal{S}_v$ is a spreading subset of $v$.

\begin{lemma}\label{Sv-cone}
Assume that all conditions of Theorem~\ref{Th2} hold. For every $ce\in \mathcal{S}_v$,
there is $\varepsilon>0$ such that 
$$\lim_{t\to +\infty}\sup_{x\in B_{\varepsilon}(ce)}\left \{ |u(t, xt)| + |v(t, xt)-1|\right \} =0.$$
\end{lemma}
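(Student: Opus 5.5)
Fix $ce\in\mathcal{S}_v$, so $w_u(e)<w_v(e)$ and $s_u(e)<c<w_v(e)$. By (\ref{C-A}) we have $e\notin\overline{\{w_u>w_v\}}$, and \eqref{Starshape} gives the projection path condition \eqref{Pathwu<wv} at $e$. The plan is to extend Lemma~\ref{lemma:eball-v} from the range $w_u(e)<c<w_v(e)$ down to $s_u(e)<c\le w_u(e)$. There are two ingredients: first show that $u$ is uniformly small in a space-time neighbourhood of the ray segment near $ce$, and then propagate $v\approx 1$ inward from larger radii, using the subsolution built from Lemma~\ref{lemm3}, exactly as in the proof of Lemma~\ref{lemma:eball-v}.

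\textbf{Step 1 (upper bound on $u$).} We need $\sup_{x\in B_{\varepsilon}(c'e)}u(s,xs)\to0$ for $c'$ in a neighbourhood of $[c,c_1]$, where $c_1\in(w_u(e),w_v(e))$. Lemma~\ref{lemma:eball-v} gives $v\approx 1$ on $B_\varepsilon(c_1e)t$. The difficulty is that $ce$ may lie inside $\W_u$, so Lemma~\ref{uv0} alone does not control $u$ there. The idea is to use that $u$ can only reach $ce$ from regions where it is established at speed at most $w_u$, and that inside $v$-territory it moves at most at speed $c_{uv}$.

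Concretely, we would build a supersolution pair from Lemma~\ref{lem1}. By the previous lemmas, the complement of $\overline{\mathcal{S}_u}$ within $\W_v$ satisfies the following: every point there is reached from the region where $v\approx1$ and $u$ is small. More precisely, $\W_v\setminus\overline{\W_u}$ is such a region by Lemma~\ref{lemma:eball-v} and Lemma~\ref{uv0}, applied to directions $e'$ near $e$ with $w_u(e')<w_v(e')$; this is justified by the star-shaped assumption and continuity. Place large balls $B_{R}(x_0)$ with $x_0$ in this region at time $\tau t$ and use $(\overline u_R,\underline v_R)$ from Lemma~\ref{lem1}, which keeps $(u,v)$ close to $(0,1)$ on a ball shrinking at speed $c_{uv}+\varepsilon$. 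The geometric requirement is then that $ce$ belongs to $\bigcup_\tau B_{\tau'(\cdot)}$, i.e.\ that $ce$ is at distance larger than $(1-\tau)c_{uv}$ from every point $\tau w_u(\xi)\xi$ with $w_u(\xi)\ge w_v(\xi)$. This is exactly the complement of the union of balls $B_{\tau c_{uv}}((1-\tau)w_u(\xi)\xi)$ defining $\mathcal{S}_u$ in \eqref{Su}.

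\textbf{Step 2 (the formula for $s_u$).} We would verify the geometry by showing that the boundary radius of $\bigcup_{\tau}B_{\tau c_{uv}}((1-\tau)w_u(\xi)\xi)$ along $e$ is the inner supremum in \eqref{F-su}: tangency of the cone from $w_u(\xi)\xi$ with opening $\arcsin(c_{uv}/w_u(\xi))$ to the ray $\R^+e$. Since $c>s_u(e)$, we obtain a uniform margin $\varepsilon$. Combined with Lemma~\ref{uv1} for directions where $w_u<w_v$, this yields $u\le\delta$ on the region $\{B_{\varepsilon s}(c's):c'\in[c,c_1],\,s\in[ct/c_1,t]\}$.

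\textbf{Step 3 (transport of $v\approx1$).} With $u\le\delta$ on that space-time region, we repeat verbatim the argument of Lemma~\ref{lemma:eball-v}. The radial subsolution $\underline v$ from Lemma~\ref{lemm3} is placed at $x_0\in B_{\varepsilon t}(cte)$ at time $ct/c_1$, where $v\ge\beta$ by \eqref{2epsilon-v}; comparison gives $v(t,\cdot)\ge\beta$ on $B_{\varepsilon t}(cte)$. Letting $\beta\to1-b\delta$ and $\delta\to0$, and using Step 1 for $u\to0$, we conclude.

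The main obstacle is Step 1. It requires a careful covering argument: one chains Lemma~\ref{lem1} balls along time, so that $u$ remains small at $ce\,t$ even though $ce\in\W_u$. The chaining must be arranged with uniform constants, using compactness of the relevant set of directions and continuity of $w_u,w_v$.
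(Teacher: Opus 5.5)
\textbf{There is a genuine gap.} You pick the right tool: chaining the shrinking $(0,1)$-balls of Lemma~\ref{lem1} out of $\W_v\setminus\overline{\W_u}$, where Lemma~\ref{lemma:eball-v} applies. But the proposal stops exactly at the crux, and what is written there does not prove it.

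In Step~1 you assert that the chain's geometric requirement ``is exactly'' that $ce$ avoid the balls $B_{\tau c_{uv}}((1-\tau)w_u(\xi)\xi)$. That is just the hypothesis $ce\notin\mathcal{S}_u$, and nothing connects it to where the balls go or why they sit where $(u,v)\approx(0,1)$ is already known. Read as a forward-reachability bound on $u$ (``inside $v$-territory $u$ moves at most at speed $c_{uv}$''), it is circular: it presupposes $v\approx 1$ along the path, which is what you want to prove.

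Step~2 does not close this either. Formula \eqref{F-su} is not needed here. A margin $c>s_u(e)$ plus Lemma~\ref{uv1} cannot give $u\le\delta$ at $c'e$ with $c'\le w_u(e)$, because Lemma~\ref{uv1} only controls $u$ outside $\overline{\W_u}$. Step~3 is superfluous: Lemma~\ref{lem1} already gives $v\ge 1-2\delta$ together with $u\le 2\delta$. Moreover, Step~1 on the segment $[c,c_1]e$ is by itself the whole lemma along that ray.

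\textbf{The missing idea is a forward-cone/backward-cone duality.} Take $ce\notin\mathcal{S}_u+B_{2\varepsilon}$ and set $\mathcal{K}:=ce+\bigcup_{\tau>0}B_{\tau(c_{uv}+\varepsilon)}(\tau ce)$. In scaled variables this is the set of earlier positions a Lemma~\ref{lem1} ball must cover to protect $cte$ at time $t$.
\begin{itemize}
\item One has $p\in\mathcal{K}$ iff $ce\in\bigcup_{0<\tau<1}B_{\tau(c_{uv}+\varepsilon)}((1-\tau)p)$.
\item $\mathcal{S}_u+B_\varepsilon$ is stable under these backward cones, since each piece of $\mathcal{S}_u$ is convex and contains $B_{c_{uv}}$. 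Hence $\mathcal{K}\cap(\mathcal{S}_u+B_\varepsilon)=\emptyset$.
\item $\mathcal{S}_u$ contains the radial segment $\{r\xi:0<r<w_u(\xi)\}$ whenever $w_u(\xi)\ge w_v(\xi)$. So every $p\in\mathcal{K}\cap\W_u$ satisfies $w_u(\widehat p)<w_v(\widehat p)$.
\item Consequently, all along $\mathcal{K}$ there is a shell just outside $\W_u$ and inside $\W_v$ on which Lemma~\ref{lemma:eball-v} gives $(u,v)\to(0,1)$. ``Directions near $e$ by continuity'' is not enough for this: the half-opening $\arcsin((c_{uv}+\varepsilon)/c)$ of $\mathcal{K}$ can be close to $\pi/2$.
\end{itemize}

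Even with the shell, one application of Lemma~\ref{lem1} does not suffice in general. For instance, with compact supports, $c_v-c_u>0$ small and $c$ close to $c_{uv}$, no ball $B_{(c_{uv}+\varepsilon)(t-s)}(cte)$ fits inside $s(B_{c_v}\setminus\overline{B_{c_u}})$. What makes the chain close is the self-similarity $(T+T')\mathcal{K}+B_{T'(c_{uv}+\varepsilon)}\subset T\mathcal{K}$. It allows an induction in steps $T'\propto T$, fed by the shell. The second part of Lemma~\ref{lem1} (back to $\delta$ after $T_\varepsilon$) prevents the errors from accumulating. The inner edge of the protected region then moves like $T(w_u+\mu)+mT'(c_{uv}+\varepsilon)$, whose scaled limit is $c_{uv}+\varepsilon<c$.

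This is the content of the paper's proof. Without it, your Step~1, which you yourself flag as the main obstacle, remains unproved.
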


\begin{proof}
Take any $ce\in \W_u\setminus \overline{\mathcal{S}_u}$. Then, since $\W_u\setminus \overline{\mathcal{S}_u}$ is an open set, there is $\varepsilon>0$ such that $ce\not\in \mathcal{S}_u +B_{2\varepsilon}$.
We claim that 
\be\label{cone-inSv}
\Big(ce+\underset{\tau>0}{\cup} B_{\tau(c_{uv}+\varepsilon)}(\tau ce)\Big)\cap (\mathcal{S}_u+B_{\varepsilon})=\emptyset.
\ee
Assume by contradiction that  there are $\tau>0$, $\tau'\in [0,\tau)$ and $\xi\in\mathbb{S}^{N-1}$ such that
$$(1+\tau)ce +\tau' (c_{uv}+\varepsilon)\xi\in ( ce+ \underset{\tau>0}{\cup} B_{\tau (c_{uv}+\varepsilon)}(\tau ce)) \cap (\mathcal{S}_u+B_{\varepsilon}).$$
Notice that for any $c'\nu \in \mathcal{S}_u+B_{\varepsilon}$, one has that $(c'-\varepsilon)\nu \in \mathcal{S}_u$ and
\be\label{e-cone}
 \underset{0<\tau<1}{\cup} B_{\tau (c_{uv}+\varepsilon)}((1-\tau)c' \nu)\subset  \underset{0<\tau<1}{\cup} B_{\tau c_{uv}}((1-\tau)(c'-\varepsilon) \nu)+B_{\varepsilon}\subset \mathcal{S}_u+B_{\varepsilon},
 \ee
by the definition of $\mathcal{S}_u$.
Then it follows  that
$$ce\in \overline{B_{\frac{\tau}{1+\tau} (c_{uv}+\varepsilon)}\Big(ce+\frac{\tau'}{1+\tau} (c_{uv}+\varepsilon) \xi\Big)}\subset \overline{\mathcal{S}_u+B_{\varepsilon}}\subset \mathcal{S}_u+B_{2\varepsilon},$$
which is a contradiction.

By \eqref{cone-inSv}, the definition of $\mathcal{S}_u$ and \eqref{C-A}, we have that 
$$w_v(\hat{\xi})>w_u(\hat{\xi}),  \hbox{ for any }\xi\in \Big(ce+\underset{\tau>0}{\cup} B_{\tau(c_{uv}+\varepsilon)}(\tau ce)\Big)\cap \W_u.$$
In other words,
$$\overline{\Big(ce+\underset{\tau>0}{\cup} B_{\tau(c_{uv}+\varepsilon)}(\tau ce)\Big)\cap \W_u}\subset \W_v$$
 By continuity of $\partial\W_u$ and $\partial\W_v$, there is $\mu>0$ such that 
\be\label{cone-inWv}
\overline{\Big(ce +\underset{\tau>0}{\cup} B_{\tau(c_{uv}+\varepsilon)}(\tau ce)\Big)\cap (\W_u+B_{2\mu})}\subset \W_v.
\ee
Assume that $\mu>\varepsilon$ even if it means decreasing $\varepsilon$.
Denote
$$\mathcal{H}_1:=\Big(ce+\underset{\tau>0}{\cup} B_{\tau(c_{uv}+\varepsilon)}(\tau ce)\Big)\cap (\W_u+B_{2\mu}),$$
$$\mathcal{H}_2:=\{r(\hat{\xi})\hat{\xi}:\, \xi\in \mathcal{H}_1,\, w_u(\hat{\xi})+\mu< r(\hat{\xi})<w_u(\hat{\xi})+2\mu\},$$
and $\mathcal{H}_3:=\mathcal{H}_1\setminus \mathcal{H}_2$.
Notice that 
\be\label{eq:H3}
\mathcal{H}_3=\Big(ce +\underset{\tau>0}{\cup} B_{\tau(c_{uv}+\varepsilon)}(\tau ce)\Big)\cap \{r(\hat{\xi})\hat{\xi}:\, \xi\in \mathcal{H}_1,\, 0\le r(\hat{\xi})\le w_u(\hat{\xi})+\mu\}.
\ee
Since $\overline{\mathcal{H}_2}$ is a compact subset of $\W_v\setminus \overline{\W_u}$ and by Lemma~\ref{lemma:eball-v}, one has
\be\label{H2}
\lim_{t\to +\infty}\sup_{x\in \mathcal{H}_2}\left \{ |u(t, xt)| + |v(t, xt)-1|\right \} =0.
\ee
For any $\delta>0$, there is $T>0$ sufficiently large such that
\be\label{H2-uv}
u(T+s,x)\le \delta,\ v(T+s, x)\ge 1-\delta, \hbox{ for $s\ge 0$ and } x\in (T+s)\mathcal{H}_2.
\ee

Consider the set
$$\mathcal{H}:=\mathcal{H}_1\cup \mathcal{H}_2.$$
Let
\be\label{eq:T'}
T':=\frac{\mu}{\sup_{\xi\in\mathcal{\mathcal{H}}_2}w_u(\hat{\xi})+c_{uv}+\mu+\varepsilon}T.
\ee
We claim that
\be\label{eq:H3H}
(T+T')\mathcal{H}_3+B_{T'(c_{uv}+\varepsilon)}\subset T\mathcal{H}.
\ee 
In fact, for any $s\in [0,T']$,
\be\label{eq:T'-s}
\begin{aligned} 
&(T+T')\Big(ce +\underset{\tau>0}{\cup} B_{\tau(c_{uv}+\varepsilon)}(\tau ce)\Big)+B_{T'(c_{uv}+\varepsilon)}\\
=&(T+T')ce +\underset{\tau>0}{\cup} B_{\tau(c_{uv}+\varepsilon)}(\tau ce)+B_{T'(c_{uv}+\varepsilon)}\\
=& Tce + \underset{\tau>0}{\cup} B_{(\tau+T')(c_{uv}+\varepsilon)}((\tau+T')ce)\\
\subset & Tce + \underset{\tau>0}{\cup} B_{\tau(c_{uv}+\varepsilon)}(\tau ce).
\end{aligned}
\ee
On the other hand, by $w_u(e)\ge c_u>c_{uv}$ and $\mu>\varepsilon$ and \eqref{eq:T'}, $(T+T')(w_u(\hat{\xi})+\mu)+T'(c_{uv}+\varepsilon)=T(w_u(\hat{\xi})+2\mu)-T\mu+T'(w_u(\hat{\xi})+\mu+c_{uv}+\varepsilon)\le T(w_u(\hat{\xi})+2\mu)$ 
for any $s\in [0,T']$ and $\xi\in\mathcal{H}_1$. By \eqref{eq:H3}, \eqref{eq:T'-s} and $\mathcal{H}=\mathcal{H}_1\cup \mathcal{H}_2$, we have \eqref{eq:H3H}.

By \eqref{eq:H3H}, it follows that
\be
\begin{aligned} 
&(T+T')\mathcal{H}_3\cap \{r(\hat{\xi})\hat{\xi}:\, \xi\in \mathcal{H}_1,\, r(\hat{\xi})>T(w_u(\hat{\xi})+\mu)+T'(c_{uv}+\varepsilon)\}+B_{T'(c_{uv}+\varepsilon)}\\
\subset & ((T+T')\mathcal{H}_3+B_{T'(c_{uv}+\varepsilon)})\cap \{r(\hat{\xi})\hat{\xi}:\, \xi\in \mathcal{H}_1,\, r(\hat{\xi})>T(w_u(\hat{\xi})+\mu)\}\\
\subset & T\mathcal{H}_2.
\end{aligned}
\ee
Let $R_{\varepsilon/2}>0$, $\delta>0$ and $T_{\varepsilon/2}$ be given by Lemma~\ref{lem1}.
By taking $T$ large enough such that $\varepsilon T'/2\ge R_{\varepsilon/2}$, one has that for any $x_0\in (T+T')\mathcal{H}_3 \cap \{r(\hat{\xi})\hat{\xi}:\, \xi\in \mathcal{H}_1,\, r(\hat{\xi})>T(w_u(\hat{\xi})+\mu)+T'(c_{uv}+\varepsilon)\}$,
$$u(T,x)\le \delta,\ v(T,x)\ge 1-\delta, \hbox{ for } x\in B_{T'(c_{uv}+\varepsilon/2)+R_{\varepsilon/2}}(x_0).$$
By Lemma~\ref{lem1}, it follows that
$$u(T+s,x)\le 2\delta,\ v(T+s,x)\ge 1-2\delta, \hbox{ for $s\in [0,T']$ and } x\in B_{(T'-s)(c_{uv}+\varepsilon/2)}(x_0),$$
 and
$$u(T+T',x_0)\le \delta,\ v(T+T',x_0)\ge 1-\delta,$$
even if it means increasing $T$ such that $T'\ge T_{\varepsilon/2}$.
Consequently, combined with \eqref{H2-uv}, one has
$$u(T+s, x)\le 2\delta,\ v(T+s, x)\ge 1-2\delta,$$
for $s\in [0,T']$ and  $x\in ((T+T')\mathcal{H}_3\cap \{r(\hat{\xi})\hat{\xi}:\, \xi\in \mathcal{H}_1,\, r(\hat{\xi})>T(w_u(\hat{\xi})+\mu)+T'(c_{uv}+\varepsilon)\}+B_{(T'-s)(c_{uv}+\varepsilon/2)})\cup ((T+s)\mathcal{H}_2)$ and
$$u(T+T',x)\le \delta,\ v(T+T',x)\ge 1-\delta,$$
for $x\in (T+T')\mathcal{H}\cap \{r(\hat{\xi})\hat{\xi}:\, \xi\in \mathcal{H}_1,\, r(\hat{\xi})>T(w_u(\hat{\xi})+\mu)+T'(c_{uv}+\varepsilon)\}$.

As \eqref{eq:H3H},  one has $(T+2T')\mathcal{H}_3+B_{T'(c_{uv}+\varepsilon)}\subset (T+T')\mathcal{H}$ and
\be
\begin{aligned} 
&(T+2T')\mathcal{H}_3\cap \{r(\hat{\xi})\hat{\xi}:\, \xi\in \mathcal{H}_1,\, r(\hat{\xi})>T(w_u(\hat{\xi})+\mu)+2T'(c_{uv}+\varepsilon)\}+B_{T'(c_{uv}+\varepsilon)}\\
\subset & ((T+2T')\mathcal{H}_3+B_{T'(c_{uv}+\varepsilon)})\cap \{r(\hat{\xi})\hat{\xi}:\, \xi\in \mathcal{H}_1,\, r(\hat{\xi})>T(w_u(\hat{\xi})+\mu)+T'(c_{uv}+\varepsilon)\}\\
\subset & (T+T')\mathcal{H}\cap \{r(\hat{\xi})\hat{\xi}:\, \xi\in \mathcal{H}_1,\, r(\hat{\xi})>T(w_u(\hat{\xi})+\mu)+T'(c_{uv}+\varepsilon)\}.
\end{aligned}
\ee
Then, one has that for any $x_0\in (T+2T')\mathcal{H}_3 \cap \{r(\hat{\xi})\hat{\xi}:\, \xi\in \mathcal{H}_1,\, r(\hat{\xi})>T(w_u(\hat{\xi})+\mu)+2T'(c_{uv}+\varepsilon)\}$,
$$u(T+T',x)\le \delta,\ v(T+T',x)\ge 1-\delta, \hbox{ for } x\in B_{T'(c_{uv}+\varepsilon/2)+R_{\varepsilon/2}}(x_0).$$
By Lemma~\ref{lem1}, it follows that
$$u(T+T'+s,x)\le 2\delta,\ v(T+T'+s,x)\ge 1-2\delta, \hbox{ for $s\in [0,T']$ and } x\in B_{(T'-s)(c_{uv}+\varepsilon/2)}(x_0),$$
 and
$$u(T+2T',x_0)\le \delta,\ v(T+2T',x_0)\ge 1-\delta.$$
Consequently, combined with \eqref{H2-uv}, one has
$$u(T+s, x)\le 2\delta,\ v(T+s, x)\ge 1-2\delta,$$
for $s\in [0,T']$ and  $x\in ((T+2T')\mathcal{H}_3\cap \{r(\hat{\xi})\hat{\xi}:\, \xi\in \mathcal{H}_1,\, r(\hat{\xi})>T(w_u(\hat{\xi})+\mu)+2T'(c_{uv}+\varepsilon)\}+B_{(T'-s)(c_{uv}+\varepsilon/2)})\cup ((T+T'+s)\mathcal{H}_2)$ and
$$u(T+T',x)\le \delta,\ v(T+T',x)\ge 1-\delta,$$
for $x\in (T+2T')\mathcal{H}\cap \{r(\hat{\xi})\hat{\xi}:\, \xi\in \mathcal{H}_1,\, r(\hat{\xi})>T(w_u(\hat{\xi})+\mu)+2T'(c_{uv}+\varepsilon)\}$.

By induction,  for any integer $m$, one has
$$u(T+s,x)\le 2\delta,\ v(T+s,x)\ge 1-2\delta,$$
for $x\in ((T+mT')\mathcal{H}_3\cap \{r(\hat{\xi})\hat{\xi}:\, \xi\in \mathcal{H}_1,\, r(\hat{\xi})>T(w_u(\hat{\xi})+\mu)+mT'(c_{uv}+\varepsilon)\}+B_{(mT'-s)(c_{uv}+\varepsilon)})\cup ((T+s)\mathcal{H}_2)$ and $s\in [(m-1)T',mT']$ and
$$u(T+mT',x)\le \delta,\ v(T+mT',x)\ge 1-\delta,$$
for $x\in (T+mT')\mathcal{H}\cap \{r(\hat{\xi})\hat{\xi}:\, \xi\in \mathcal{H}_1,\, r(\hat{\xi})>T(w_u(\hat{\xi})+\mu)+mT'(c_{uv}+\varepsilon)\}$.

Since $ce\not\in \mathcal{S}_u+B_{2\varepsilon}$ and by the definition of $\mathcal{S}_u$, one has that $c\ge c_{uv}+2\varepsilon$. Since $\mathcal{H}_3\subset ce +\underset{\tau>0}{\cup} B_{\tau(c_{uv}+\varepsilon)}(\tau ce)$, one gets that $(T+mT')\mathcal{H}_3\subset\{r(\hat{\xi})\hat{\xi}:\, \xi\in \mathcal{H}_1,\, r(\hat{\xi})>T(w_u(\hat{\xi})+\mu)+mT'(c_{uv}+\varepsilon)\}$ for large $m$. Therefore, it follows that
\be\label{Q-H3}
u(T+s,x)\le 2\delta,\ v(T+s,x)\ge 1-2\delta,
\ee
for $x\in ((T+mT')\mathcal{H}_3+B_{(mT'-s)(c_{uv}+\varepsilon)})\cup ((T+s)\mathcal{H}_2)$ and $s\in [(m-1)T',mT']$ with large $m$.

For any $c'e\in \mathcal{S}_v$ such $c'>w_u(e)$, the conclusion follows immediately from Lemma~\ref{lemma:eball-v}. For any $c'e\in \mathcal{S}_v$ such that $c'\le w_u(e)$, it is clear that there is $c<c'$ such that $ce \in \W_u\setminus \overline{\mathcal{S}_u}$ and $c'e\in \hbox{int}(\mathcal{H}_3)$. Then, there is $\varepsilon'>0$ such that $B_{2\varepsilon'}(c'e)\subset \mathcal{H}_3$. For any $s\in [(m-1)T',mT']$, one has
$$(T+mT')B_{2\varepsilon'}(c'e)=(T+s)c'e+(mT'-s)c'e +B_{\varepsilon'(T+s)+\varepsilon'(T+2mT'-s)}\subset (T+mT')\mathcal{H}_3.$$
By $\varepsilon'(T+2mT'-s)\rightarrow +\infty$ as $m\rightarrow +\infty$ and $(mT'-s)c'\le T'c'$, it follows that $(T+s)B_{\varepsilon'}(c'e)\subset (T+mT')\mathcal{H}_3$ for $s\in [(m-1)T',mT']$ with large $m$. By \eqref{Q-H3}, it finally has 
$$u(T+s,x)\le 2\delta,\ v(T+s,x)\ge 1-2\delta,$$
for $x\in (T+s)B_{\varepsilon'}(c'e)$ and $s\in [(m-1)T',mT']$ with large $m$.
The conclusion follows from the arbitrariness of $\delta$.
\end{proof}

Now, we are ready to prove Theorem~\ref{Th2}.

\begin{proof}[Proof of Theorem~\ref{Th2}]
The set $\W_{uv}$ is a spreading subset of $u$ and the set $\W_v\setminus \overline{\W_{uv}}$ is a spreading superset of $v$ by Lemma~\ref{sycuv}. For any $ce \in\mathcal{S}_u$ such that $ce\in \cup_{0<\tau<1} B_{\tau c_{uv}}((1-\tau)w_u(\xi)\xi)$ for some $\xi\in\mathbb{S}^{N-1}$ such that $w_u(\xi)>w_v(\xi)$, it follows from the openness of $\cup_{0<\tau<1} B_{\tau c_{uv}}((1-\tau)w_u(\xi)\xi)$ that there is $c_{uv}<c'<w_u(\xi)$ and $\varepsilon>0$ such that $ce\in \cup_{0<\tau<1} B_{\tau (c_{uv}-\varepsilon)}((1-\tau)c'\xi)$. Then, Lemma~\ref{Su-cone} implies that there is $\varepsilon'>0$ such that 
$$\lim_{t\to +\infty}\sup_{x\in B_{\varepsilon'}(ce)}\left \{ |u(t, xt)| + |v(t, xt)-1|\right \} =0.$$
Therefore, $\mathcal{S}_u$ is a spreading subset of $u$ and $\R^N\setminus \overline{\mathcal{S}_u}$ is a spreading superset of $v$. For any $ce\in \R^N\setminus \overline{\mathcal{S}_u}$, it follows from the definition of $\mathcal{S}_u$ that either $ce\in \R^N\setminus \overline{\W_u}$ or $ce\in \W_v\setminus \overline{\mathcal{S}_u}$. Then, Lemma~\ref{uv0} and Lemma~\ref{Sv-cone} imply that $\mathcal{S}_u$ is a spreading superset of $u$. Consequently, $\mathcal{S}_u$ is the spreading set of $u$. On the other hand, $\W_v$ is a spreading superset of $v$ by  Lemma~\ref{uv0} and hence, $\W_v\setminus\overline{\mathcal{S}_u}=\mathcal{S}_v$ is a spreading superset of $v$. By Lemma~\ref{Sv-cone}, we can also conclude that $\mathcal{S}_v$ is the spreading set of $v$.

From the definition of $\mathcal{S}_u$, the set $\mathcal{S}_u$ is star-shaped with respect to $0$. So,  the existence of $s_u(e)$ is clear. Then, we prove the formula \eqref{F-su} of $s_u(e)$ and its properties.

First notice the following fact. For any $c>c_{uv}$ and $\xi\in\mathbb{S}^{N-1}$,
$$re\in \underset{0<\tau<1}{\cup}B_{\tau c_{uv}}((1-\tau) c \xi),$$
if and only if
\begin{eqnarray}\label{F-r}
\left\{\begin{array}{lll}
0\le r<c_{uv},&& \hbox{if } \xi\cdot e\le \frac{c_{uv}}{c},\\
0\le r<\frac{c_{uv}c}{\sqrt{1-(\xi\cdot e)^2}\sqrt{c^2+c_{uv}^2}+c_{uv}\xi\cdot e},&& \hbox{if } \xi\cdot e> \frac{c_{uv}}{c},
\end{array}
\right.
\end{eqnarray}
Since $c>c_{uv}$, one can easily show that
\be\label{increasingc}
\frac{c_{uv}c}{\sqrt{1-(\xi\cdot e)^2}\sqrt{c^2+c_{uv}^2}+c_{uv}\xi\cdot e} \hbox{ is increasing in $c$ and $\xi\cdot e$}.
\ee
Then, for any $c'\xi'\in \underset{0<\tau<1}{\cup}B_{\tau c_{uv}}((1-\tau) c \xi)$ with $c'>c_{uv}$, one has that
\be\label{continuity}
\underset{0<\tau<1}{\cup}B_{\tau c_{uv}}((1-\tau) c' \xi')\subset \underset{0<\tau<1}{\cup}B_{\tau c_{uv}}((1-\tau) c \xi).
\ee

On the other hand, by $\W_u=\R^+\mathcal{U}(U)+B_{c_u}$, we claim that
\be\label{claim-Wu}
\W_u=\underset{e\in\mathbb{S}^{N-1}}{\cup}\underset{0<\tau<1}{\cup}B_{\tau c_{u}}((1-\tau) w_u(e) e),
\ee
with the convention that $\cup_{0<\tau<1} B_{\tau c_{u}}((1-\tau)w_u(e)e)=\R^+ e +B_{c_{u}}$ if $w_u(e)=+\infty$. If $\mathcal{U}(U)=\emptyset$, then $w_u(e)=c_{u}$ and
$$\underset{e\in\mathbb{S}^{N-1}}{\cup}\underset{0<\tau<1}{\cup}B_{\tau c_{u}}((1-\tau) w_u(e) e)=B_{c_u}=\W_u.$$
If $\mathcal{U}(U)\neq\emptyset$, it is easy to find that 
$$\W_u\subset \underset{e\in\mathbb{S}^{N-1}}{\cup}\underset{0<\tau<1}{\cup}B_{\tau c_{u}}((1-\tau) w_u(e) e).$$
For $e\in\mathcal{U}(U)$, that is, $w_u(e)=+\infty$, it follows that 
$$\underset{0<\tau<1}{\cup}B_{\tau c_{u}}((1-\tau) w_u(e) e)=\R^+ e +B_{c_{u}}\subset \W_u.$$
For $e\not\in\mathcal{U}(U)$, that is, $w_u(e)<+\infty$, there is $\xi\in \mathcal{U}(U)$ such that 
$$\underset{0<\tau<1}{\cup}B_{\tau c_{u}}((1-\tau) w_u(e) e)\subset \R^+\xi+B_{c_u}\subset \W_u.$$
Thus, 
$$\underset{e\in\mathbb{S}^{N-1}}{\cup}\underset{0<\tau<1}{\cup}B_{\tau c_{u}}((1-\tau) w_u(e) e)\subset \W_u.$$
Then, \eqref{claim-Wu} follows.

By \eqref{claim-Wu}, one has that
$$\underset{e\in\mathbb{S}^{N-1}}{\cup}\underset{0<\tau<1}{\cup}B_{\tau c_{uv}}((1-\tau) w_u(e) e)\subset \W_u.$$
The reversed inclusion is obvious. So, 
\be\label{claim2-Wu}
\W_u=\underset{e\in\mathbb{S}^{N-1}}{\cup}\underset{0<\tau<1}{\cup}B_{\tau c_{uv}}((1-\tau) w_u(e) e).
\ee
By \eqref{F-r}, this implies that
\be\label{max-su}
w_u(e)\ge \frac{c_{uv} w_u(e)}{\sqrt{1-(\xi\cdot e)^2}\sqrt{w_u^2(\xi)+c_{uv}^2}+c_{uv}\xi\cdot e},
\ee
for any $\xi\in\mathbb{S}^{N-1}$ such that $\xi\cdot e>c_{uv}/w_u(\xi)$, and the inequality is strict if $\xi\neq e$.

By the definition of $\mathcal{S}_u$ and \eqref{claim2-Wu}, it is clear that
\be\label{WuvSuWu}
\W_{uv}\subset \mathcal{S}_u\subset \W_u.
\ee

For $e\in\mathcal{U}(U)$, one has that $\R^+e\subset \W_{uv}\subset \mathcal{S}_u$ which implies that $s_u(e)=+\infty$.

For $e\not\in\mathcal{U}(U)$ such that $w_u(e)\ge w_v(e)$, it follows from \eqref{WuvSuWu} that $re\in \mathcal{S}_u$ if and only if $0\le r\le w_u(e)$. Meantime, by \eqref{max-su} and \eqref{increasingc} , we have that
\be
\begin{aligned} 
s_u(e)=&\sup_{\substack{\xi\in\mathbb{S}^{N-1},\\ w_u(\xi)\ge w_v(\xi), \\ \xi\cdot e> c_{uv}/w_u(\xi)}}\sup_{0\le c<w_u(\xi)}\frac{c c_{uv}}{\sqrt{1-(\xi\cdot e)^2} \sqrt{c^2 +c_{uv}^2}+c_{uv}\xi\cdot e }\\
=&\sup_{\substack{\xi\in\mathbb{S}^{N-1},\\ w_u(\xi)\ge w_v(\xi), \\ \xi\cdot e> c_{uv}/w_u(\xi)}} \frac{w_u(\xi) c_{uv}}{\sqrt{1-(\xi\cdot e)^2} \sqrt{w^2_u(\xi) +c_{uv}^2}+c_{uv}\xi\cdot e }=w_u(e).
\end{aligned}
\ee
For $e\not\in\mathcal{U}(U)$ such that $w_u(e)< w_v(e)$, it follows from \eqref{F-r} that $re\in \mathcal{S}_u$ if and only if $0\le r<s_u(e)$. By \eqref{max-su} and \eqref{WuvSuWu}, $w_{uv}(e)\le s_u(e)<w_u(e)$.

The continuity of $s_u(e)$ follows from \eqref{continuity} and the definition of $\mathcal{S}_u$.
\end{proof}
\vskip 0.5cm

\noindent
\textbf{Data availability.}
No data was used for the research described in the article.
\vskip 0.3cm

\noindent
\textbf{Conflict of interest.}
All authors affirms that there is no conflict of interest.

\vskip 0.3cm
\noindent
\textbf{Acknowledgement.} H. Guo was supported by the fundamental research funds for the central universities and NSF of China (12471201).



\end{document}